\setlist[enumerate]{label={\arabic*.}}
\setlist[description]{font=\normalfont\itshape}
\def\N{\mathbf{N}}
\def\Z{\mathbf{Z}}
\def\Q{\mathbf{Q}}
\def\C{\mathbf{C}}
\def\F{\mathbf{F}}
\def\PP{\mathbf{P}}
\def\Pr{\mathbf{P}}
\def\basis{\mathcal{B}}
\def\steinbergbasis{\mathcal{S}}
\def\integers{\mathcal{O}}
\DeclareMathOperator{\lin}{Lin}
\DeclareMathOperator{\Tr}{Tr}
\DeclareMathOperator{\SL}{SL}
\DeclareMathOperator{\slfrak}{\mathfrak{sl}}
\DeclareMathOperator{\spfrak}{\mathfrak{sp}}
\DeclareMathOperator{\sofrak}{\mathfrak{so}}
\DeclareMathOperator{\sufrak}{\mathfrak{su}}
\DeclareMathOperator{\liealg}{\mathfrak{g}}
\DeclareMathOperator{\g}{\mathfrak{g}}
\DeclareMathOperator{\gfrak}{\mathfrak{g}}
\DeclareMathOperator{\ffrak}{\mathfrak{f}}
\DeclareMathOperator{\efrak}{\mathfrak{e}}
\DeclareMathOperator{\diam}{diam}
\DeclareMathOperator{\tower}{T}
\DeclareMathOperator{\PSL}{PSL}
\DeclareMathOperator{\Frob}{Frob}
\DeclareMathOperator{\Gal}{Gal}
\DeclareMathOperator{\irr}{(irr)}
\DeclareMathOperator{\density}{density}
\DeclareMathOperator{\image}{im}
\DeclareMathOperator{\fieldnorm}{Norm}
\DeclareMathOperator{\ad}{ad}
\DeclarePairedDelimiter\floor{\lfloor}{\rfloor}
\DeclarePairedDelimiter{\ceil}{\lceil}{\rceil}
\renewcommand{\abs}[1]{\lvert#1\rvert}
\newtheorem{theorem}{Theorem}[section]
\newtheorem*{theorem*}{Theorem}
\newtheorem{lemma}[theorem]{Lemma}
\newtheorem{proposition}[theorem]{Proposition}
\newtheorem{corollary}[theorem]{Corollary}
\newtheorem*{example*}{Example}
\theoremstyle{definition}
\newtheorem{remark}[theorem]{Remark}
\newtheorem{example}[theorem]{Example}
\newtheorem{conjecture}[theorem]{Conjecture}
\newtheorem*{conjecture*}{Conjecture}
\begin{document}
\baselineskip=13pt 
\def\UrlBreaks{\do\/\do-}

\title{Diameter bounds for finite simple Lie algebras}
\author{}
\date{}

\author{Marco Barbieri}
\address{Marco Barbieri, Faculty of Mathematics and Physics, University of Ljubljana, Jadranska 19, 1000 Ljubljana, Slovenia}
\email{marco.barbieri@fmf.uni-lj.si}

\author{Urban Jezernik}
\address{Urban Jezernik, Faculty of Mathematics and Physics, University of Ljubljana, Jadranska 19, 1000 Ljubljana, Slovenia / Institute of Mathematics, Physics, and Mechanics, Jadranska 19, 1000 Ljubljana, Slovenia}
\email{urban.jezernik@fmf.uni-lj.si}

\author{Matevž Miščič}
\address{Matevž Miščič, Faculty of Mathematics and Physics, University of Ljubljana, Jadranska 19, 1000 Ljubljana, Slovenia / Institute of Mathematics, Physics, and Mechanics, Jadranska 19, 1000 Ljubljana, Slovenia}
\email{matevz.miscic@imfm.si}

\thanks{This work has been supported by the Slovenian Research Agency program P1-0222 and grants J1-50001, J1-4351, J1-3004, N1-0217.}

\begin{abstract}
    We prove strong and explicit diameter bounds for finite simple Lie algebras, which parallel Babai's conjecture for finite simple groups. Specifically, we show that any nonabelian finite simple Lie algebra $\g$ over $\F_p$ has diameter $O((\log{\abs{\g}})^D)$ for $D \approx 3.11$ with respect to any generating set. For $\F_p$-forms of classical Lie algebras of fixed Lie type, we establish the sharper bound $O(\log{\abs{\g}})$ when the generators are chosen uniformly at random.
\end{abstract}

\maketitle


\section{Introduction}

\subsection{Diameters of finite simple groups}
Let $G$ be a group generated by a set $A$. The \emph{diameter} of $G$ with respect to $A$, denoted $\diam(G,A)$, is the smallest integer $k$ such that every element of $G$ can be written as a product of at most $k$ elements from $A$. In effect, it measures how quickly $A$ generates $G$. Diameters have been widely studied in finite simple groups, led by the following conjecture.

\begin{conjecture}[rapid generation of groups -- Babai's conjecture \cite{babais-conjecture}]
There exist absolute constants $C, D$\footnote{A stronger version of the conjecture predicts that one might take $D = 2$.} so that the following holds. Let $G$ be a nonabelian finite simple group and $A$ any generating set of $G$. Then $\diam(G,A) \leq C(\log \abs{G})^D$.
\end{conjecture}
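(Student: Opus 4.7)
The plan is to invoke the classification of finite simple groups and split into three regimes: the finitely many sporadic groups (absorbed into the constant $C$), the alternating groups $A_n$, and the groups of Lie type $G(\F_q)$ subdivided further by Lie rank. This leaves two genuinely distinct cases to attack with different machinery.

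For $A_n$ with an arbitrary generating set $A$, I would follow the Babai-Seress combinatorial route: first produce a short word in $A$ that moves only a small subset of $\{1,\dots,n\}$ (using a support-shrinking argument exploiting commutators with carefully chosen conjugates), then use conjugation to manufacture enough short-support elements, and finally assemble any target permutation from these via a sorting-network style product. Sharpened by Helfgott-Seress, this produces a quasi-polynomial bound of shape $\exp((\log\abs{G})^{O(\log\log\abs{G})})$, consistent with the conjecture though not quite matching its polynomial exponent.

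For groups of Lie type of \emph{bounded} rank, the strategy is the \emph{product theorem}: for every symmetric generating set $A$ that is not already almost all of $G$, show that $\abs{A\cdot A\cdot A}\geq \abs{A}^{1+\epsilon}$ with $\epsilon$ depending only on the rank. The three ingredients are (i) an \emph{escape from subvarieties} lemma, guaranteeing that short words in $A$ avoid any fixed proper subvariety of the ambient algebraic group; (ii) \emph{Larsen-Pink dimension estimates}, controlling the intersection of an approximate subgroup with an algebraic subvariety in terms of its dimension rather than its naive size; and (iii) a tripling-to-diameter iteration turning the growth estimate into a $(\log\abs{G})^{O(1)}$ bound. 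This path was realized by Helfgott for $\PSL_2(\F_p)$ and extended to all fixed rank by Pyber-Szabó and Breuillard-Green-Tao.

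The main obstacle, and the reason the conjecture is not a theorem, is the case of classical groups of \emph{unbounded} rank such as $\PSL_n(\F_p)$ as $n\to\infty$. The Larsen-Pink constants degrade with the rank, so the product-theorem machinery does not give a uniform polylogarithmic bound; adversarial generators can conspire to hide inside intermediate subgroups whose diameter we cannot yet control rank-independently. Overcoming this requires either a growth mechanism exploiting $\F_p$-linearity at a scale independent of the rank, or a tower-style reduction to generation inside a fixed smaller subgroup combined with commutator-width estimates. Bypassing precisely this obstruction, on the Lie algebra side where scalar multiplication becomes a legitimate move, is the conceptual point of the present paper.
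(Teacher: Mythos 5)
The statement you were asked to prove is Babai's conjecture, which the paper records precisely as a \emph{conjecture}: it is an open problem, the paper offers no proof of it, and no proof exists in the literature. Your write-up is therefore not, and cannot be, a proof; it is a survey of the known partial results and the obstruction to a complete argument. To your credit, you recognize this explicitly in your final paragraph, and the survey itself is accurate: the reduction via CFSG to alternating groups and groups of Lie type, the Babai--Seress/Helfgott--Seress quasi-polynomial bound $\exp\bigl((\log n)^{O(1)}\bigr)$ for $A_n$ (which is weaker than the conjectured polylogarithmic bound), the product theorem of Helfgott/Pyber--Szab\'o/Breuillard--Green--Tao for bounded rank, and the degradation of the Larsen--Pink constants with the rank as the central unresolved difficulty. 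One small caveat: the bounded-rank product theorem requires care about the symmetry and identity-containing normalization of $A$ before iterating, and the tripling-to-diameter step uses a standard pigeonhole/doubling argument; these are routine but should not be elided if this were meant as a genuine proof sketch of the bounded-rank case.

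The substantive point to internalize is that the paper's own contribution is the \emph{Lie algebra analogue} (its Conjecture 1.2 and Theorem 1.3), where the presence of scalar multiplication and the sum-product phenomenon over $\F_p$ make a complete, rank-uniform argument possible. The paper does not touch the group-theoretic conjecture beyond citing it as motivation. So the right framing for your answer is not ``here is a proof,'' but ``here is why this is open, and here is why the linearized version in the paper is tractable''---which, in fact, is roughly what you wrote, and your final sentence makes that connection correctly.
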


The conjecture is open in general. Being asymptotic, it reduces to proving the existence of such constants $C, D$ for the infinite families of alternating groups ${\rm Alt}(n)$ (as $n \to \infty$) and finite simple groups of Lie type such as $\PSL_n(\F_q)$ (as $n \to \infty$ and/or $q \to \infty$). The conjecture is known to hold in the special case of finite simple groups of fixed Lie type (for example $\PSL_n(\F_q)$ with $n$ fixed and $q \to \infty$) \cite{helfgott2008growth,breuillard2011approximate,pyber2016growth,bajpai2024new}. What all these proofs have in common is the use of a \emph{product theorem}, which states that generating sets of finite simple groups of fixed Lie type grow uniformly under multiplication (unless they are already very large).

\subsection{Diameters of finite simple Lie algebras}

The same game can be played in Lie algebras. Let $\gfrak$ be a Lie algebra over $\F_p$. For subsets $X,Y \subseteq \gfrak$ write
\[
    X + Y = \{x + y \mid x \in X, \ y \in Y\} 
    \quad \text{and} \quad 
    [X, Y] = \{[x, y] \mid x \in X, \ y \in Y\}.
\]
Suppose that $\gfrak$ is generated as a Lie algebra by a set $A$. Let
\[
    A^1 = \{0\} \cup A, \qquad
    A^k = \bigcup_{0 < j < k} \left((A^j + A^{k-j}) \cup [A^j, A^{k-j}]\right) \quad \text{for } k \geq 2.
\]
The \emph{diameter} of $\gfrak$ with respect to $A$, denoted $\diam(\gfrak, A)$, is the least $k$ such that $A^k = \gfrak$. We can now state the linear version of Babai's conjecture as follows.

\begin{conjecture}[rapid generation of Lie algebras]
There exist absolute constants $C, D$ so that the following holds. Let $\gfrak$ be a nonabelian finite simple Lie algebra over $\F_p$ and $A$ any generating set of $\gfrak$. Then $\diam (\gfrak, A) \leq C (\log \abs{\gfrak})^D$.
\end{conjecture}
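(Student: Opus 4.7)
The plan is to mimic the group-theoretic approach to Babai's conjecture, exploiting the richer additive-plus-commutator structure of Lie algebras. The starting point is the classification of nonabelian finite simple Lie algebras over $\F_p$: for $p > 3$ these are the Chevalley-type algebras $\gfrak(\F_p)$, while for small $p$ one must additionally handle Cartan-type and (in characteristic $5$) Melikyan algebras. I would first reduce to the classical case $\gfrak = \slfrak_n, \sofrak_n, \spfrak_n$ and the handful of exceptional types, addressing the exotic characteristic families separately; the bulk of the work lies in the classical case where the rank may tend to infinity.

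The central strategy is to manufacture, in polylogarithmically many steps from any generating set $A$, a regular semisimple element $h \in A^k$. The value of such an $h$ is that $\ad_h$ acts diagonalisably with distinct eigenvalues on the root spaces, so polynomial expressions $p(\ad_h)$ --- themselves realisable through iterated commutators in $A^{O(k \deg p)}$ --- serve as explicit projectors onto each $\gfrak_\alpha$ and onto the Cartan $\cartan$. To build such an $h$, I would iterate an escape argument: starting from elements of $A^k$, take commutators and linear combinations to extract nonzero semisimple parts, then apply powers of $\ad_x$ on suitable invariant subspaces to force regularity. The number of iterations is controlled by $\dim \gfrak$, contributing a $(\log \abs{\gfrak})^{O(1)}$ factor.

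Once a regular semisimple $h$ is available, I would argue via a Steinberg-style basis. Using the projectors $p(\ad_h)$, any element of $A^k$ decomposes into its weight-space components, and these can be rescaled using $\cartan$ to reach standard root vectors $e_\alpha$ in polylog steps. Commutators of root vectors then sweep out $\bigoplus_\alpha \gfrak_\alpha$, while the brackets $[e_\alpha, e_{-\alpha}]$ generate $\cartan$. Summing these expressions produces arbitrary elements of $\gfrak$, and a careful accounting of the doubling cost at each phase yields the $(\log \abs{\gfrak})^D$ bound for some explicit $D$. For the sharper $O(\log \abs{\gfrak})$ bound under random generation at fixed Lie type, one appeals to the fact that two random elements are already regular semisimple and in general position with high probability, eliminating the escape phase altogether.

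I expect the main obstacle to be the escape step producing the regular semisimple $h$: one must control both the dimension loss at each iteration and the arithmetic of eigenvalues over $\F_p$, since a random element need not be regular and small characteristic can collapse the appropriate discriminants. This is the Lie-algebraic counterpart of the hardest component of the group-theoretic product theorems, where locating a non-degenerate element to bootstrap the induction is the crux of the matter.
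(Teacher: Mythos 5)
Your proposal rests on two pillars that are each problematic, and it does not anticipate the mechanism the paper actually uses.

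First, you begin by invoking ``the classification of nonabelian finite simple Lie algebras over $\F_p$.'' No such complete classification exists. The Block--Wilson--Strade--Premet theorem classifies simple Lie algebras over an \emph{algebraically closed} field of characteristic $p>3$; over $\F_p$ itself one must pass to forms, and only the \emph{absolutely} simple ones are understood. The paper points this out explicitly and deliberately structures the main theorem so as to bypass any classification: the only property of $\gfrak$ that is used is simplicity (which enters via \Cref{lem:dim_growth}, where the span of a relative tower $\tower_{\leq k}(A,b)$ is shown to be an ideal and hence all of $\gfrak$). An argument that reduces to a type-by-type analysis of classical, Cartan and Melikyan families cannot establish the stated conjecture for all nonabelian finite simple Lie algebras over $\F_p$.

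Second, the heart of your plan is an ``escape argument'' to produce a regular semisimple element $h\in A^k$ with $k$ polylogarithmic, followed by projectors $p(\ad_h)$ onto root spaces. You correctly identify this escape step as the hard part, but you do not say how to carry it out, and there is no obvious Lie-algebra analogue of the group-theoretic escape lemmas that would deliver it in $(\log\abs{\gfrak})^{O(1)}$ steps from an \emph{arbitrary} generating set. Note also that a regular semisimple element need not exist inside the $\F_p$-points of a Cartan-type algebra, and even in the classical case ``most elements are regular'' only helps for random generators, not worst-case ones. The paper sidesteps this entirely: it measures growth by the quantity $\ell(A^k)$ (the largest number of points of $A^k$ on a line), proves a \emph{product theorem in lines} driven by the sum-product theorem in $\F_p$ (\Cref{thm:sum_product}), and shows (\Cref{prop:full_line}) that once a single full line lies in $A^k$, iterated brackets against a spanning tower cover $\gfrak$ in $O(k\dim\gfrak + (\dim\gfrak)^2)$ more steps. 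No semisimple elements, no spectral projectors, and no classification are needed. Your sketch is not a variant of this; it is a different plan with an unresolved core step, and as written it does not constitute a proof.
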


How are the two conjectures related? Classical Lie groups are intimately connected to their Lie algebras. This correspondence is facilitated by the Baker-Campbell-Hausdorff formula, which behaves well over local fields but much less so over finite fields $\F_p$.\footnote{In fact, a correspondence between finite $p$-groups and finite Lie algebras over $\F_p$ exists when the groups are nilpotent of class less than $p$, known as the Lazard correspondence \cite[Chapter 10]{khukhro1998p}.} While an $\exp$--$\log$ correspondence exists at the level of generators, there is no global link between finite simple groups of Lie type and finite simple Lie algebras over $\F_p$. Nevertheless, studying the linear Lie algebra setting captures aspects of the complexity present in the group-theoretic problem.

By a recent result of Dona \cite{dona-sum-bracket}, rapid generation holds for classical Lie algebras of fixed Lie type (as for groups) that are split over $\F_p$. These are exactly the Lie algebras obtained from complex simple Lie algebras via base change to $\F_p$.\footnote{There are more simple algebras over $\F_p$ than just the classical ones and their forms. See \Cref{sec:classical_lie_algebras} for details over an algebraically closed field of positive characteristic.} The constant $D$ in rapid generation can be taken as $O(\dim(\liealg)^2 \log \dim(\liealg))$, remaining fixed for a given Lie type. The proof in fact uses methods from the group case (in particular, a \emph{sum-bracket theorem}), but in a simplified, linear form.

\subsection{Contributions}

In this paper, we reuse methods developed by Dona and build on them to fully prove linear Babai's conjecture with explicit $D$. We study classical Lie algebras in detail and provide even stronger diameter bounds when the Lie type is fixed and generators are chosen at random.

\subsubsection{Rapid generation of Lie algebras}

\begin{theorem}
    \label[theorem]{thm:babai_conjecture}
    Let $D = \log 2 / \log(5/4)$. For any $\epsilon > 0$ there is a constant $C$ such that the following holds. Let $\gfrak$ be a nonabelian finite simple Lie algebra over $\F_p$ and $A$ any generating set of $\gfrak$. Then
    \[
        \diam(\liealg, A) \leq C(\log\abs{\liealg})^{D + \epsilon}.
    \]
\end{theorem}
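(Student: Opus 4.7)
The exponent $D = \log 2 / \log(5/4)$ points to a doubling scheme in which each doubling of the word length multiplies $\log|A^k|$ by a factor of essentially $5/4$. The plan is to establish a uniform growth estimate of the form
\[
    |A^{2k}| \;\geq\; |A^k|^{5/4}
\]
(possibly with a slightly smaller exponent $5/4 - \eta$), valid for any generating set $A$ of a nonabelian finite simple Lie algebra $\g$ over $\F_p$ so long as $|A^k|$ lies below a saturation threshold such as $|A^k| \leq |\g|^{1-\delta}$, and then to iterate. This would be a strengthened and, crucially, $\dim\g$-uniform version of the sum-bracket theorem underlying Dona's work.

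\textbf{Iteration and closure.} Setting $a_i = \log|A^{2^i}|$, the growth estimate yields $a_{i+1} \geq \tfrac{5}{4}\, a_i$. Since $\g$ is nonabelian simple, any Lie-generating set contains at least two distinct nonzero elements, so $a_0 \geq \log 2$ and hence $a_i \geq (5/4)^i \log 2$. The first index $i^\star$ for which $a_{i^\star} \geq (1-\delta)\log|\g|$ satisfies
\[
    2^{i^\star} \;\leq\; C\,(\log|\g|)^{\log 2 / \log(5/4)} \;=\; C\,(\log|\g|)^D.
\]
Once $|A^k| > |\g|/2$, a one-line pigeonhole closes the proof: for every $x \in \g$, the sets $A^k$ and $x - A^k$ both have size exceeding $|\g|/2$, so they intersect and force $x \in A^k + A^k \subseteq A^{2k}$. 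The coverage phase thus only doubles the word length. The $\epsilon$ in the exponent absorbs the slight loss incurred if the sum-bracket inequality is only available with exponent $5/4 - \eta$ for arbitrarily small $\eta > 0$, as well as the constant-factor boundary transition near the saturation threshold.

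\textbf{Main obstacle.} The hard part is the sum-bracket estimate itself, made uniform in $\g$. Dona's theorem supplies such an inequality for split classical Lie algebras, but with an exponent that decays with $\dim\g$; to obtain the uniform $5/4$, I would revisit his escape-from-subvariety argument and lean more heavily on the simplicity of $\g$ (no proper Lie ideal can absorb $A^k$) so as to eliminate the $\dim\g$ dependency. The remaining task is to handle the non-classical finite simple Lie algebras over $\F_p$: twisted classical forms, the Cartan-type families (Witt, special, Hamiltonian, contact), and, in characteristic $5$, the Melikian algebras. Each of these admits an explicit description as a (twisted) truncated polynomial algebra equipped with a derivation or a Poisson-like bracket, and one would exploit that structure in a case-by-case verification of the growth inequality. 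Attaining the sharp numerical exponent $5/4$, rather than merely some $1 + c$, is the delicate numerical point and where I expect the bulk of the genuinely new work to concentrate.
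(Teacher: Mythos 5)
Your proposal pivots on a hypothetical sum-bracket theorem of the form $|A^{2k}|\geq|A^k|^{5/4-\eta}$, with constants uniform over all nonabelian finite simple Lie algebras $\g$ over $\F_p$. This is a far stronger statement than anything available, and is where the proof genuinely fails to materialize. Dona's sum-bracket theorem (quoted here as \Cref{corollary:sum_bracket}) gives an absolute gain exponent $1+c$, but the number of steps needed to realize it grows with the Lie type, so after iterating the resulting diameter exponent is $O(\dim(\g)^2\log\dim\g)$ rather than uniform. As in the group case (where one cannot hope for a rank-uniform product theorem, e.g.\ for $\mathrm{Alt}(n)$ or $\SL_n(\F_2)$), there is no reason to expect the dimension dependence can simply be eliminated by ``leaning on simplicity.'' The paper circumvents this entirely by a change of variables: instead of tracking the size of $A^k$ in $\g$, it tracks the set of scalars $X=\{\alpha\in\F_p : \alpha v\in A^k\}$ on a single line $\F_p v$. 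The problem thereby collapses from $\g$ to $\F_p$, where the sum-product theorem (\Cref{thm:sum_product}) yields the $5/4-\epsilon$ exponent with constants that do not see $\dim\g$ at all. \Cref{line_growth} then says $\ell(A^{2k+\dim\g})\geq\ell(A^k)^{5/4-\epsilon}$ unless a full line is already obtained, and iterating this produces $\ell(A^k)=p$ after $O_\epsilon\bigl(\dim\g\cdot(\log p)^{D+\epsilon}\bigr)$ steps. Finally \Cref{prop:full_line} transports the captured line along a basis built by relative towers (\Cref{lem:dim_growth}), closing the argument in $O((\dim\g)^2)$ further steps. Your pigeonhole closure (once $|A^k|>|\g|/2$, two shifted copies intersect) is correct and standard, but the sum-product route never reaches a set that large; the paper's closure mechanism is structurally different and does not need it.

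A second genuine gap is the proposed case-by-case treatment of non-classical algebras. The finite simple Lie algebras over $\F_p$ are not classified — Block--Wilson--Strade--Premet applies only over algebraically closed fields with $p>3$, and for $p\leq 3$ nothing is known — so ``enumerate Cartan-type and Melikian families and verify'' cannot establish the theorem as stated. This is exactly why the paper's argument is built to use only simplicity (via the ideal argument in the proof of \Cref{lem:dim_growth}): it requires no knowledge of the classification. Any approach that relies on an explicit list of the simple Lie algebras cannot, as of now, prove the result in the stated generality.
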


Notably, finite simple Lie algebras are not fully classified, unlike finite simple groups. Our argument bypasses this by relying only on simplicity. Moreover, the proof yields the more explicit bound $O_{\epsilon}\left((\log p)^{D+\epsilon}(\dim \g)^2\right)$. When $p$ is fixed, this becomes 
$O_{p}\left((\log \abs{\g})^2\right)$, matching the stronger version of Babai's conjecture for finite simple groups. In general, our exponent $D \approx 3.11$ is slightly larger.

\begin{remark}
    The result above is also valid for nonabelian finite simple Lie algebras $\gfrak$ over $\F_q$ with $q$ a power of $p$, as long as $\gfrak$ has finite diameter with respect to $A$. This is not the same as saying that $A$ generates $\gfrak$ as a Lie algebra over $\F_q$, since the sets $A^k$ are entirely contained in the $\F_p$-Lie algebra generated by $A$, which might be a proper subalgebra of $\gfrak$ when $q$ is a proper power of $p$. However, the simplicity of $\gfrak$ as an $\F_q$-Lie algebra implies its simplicity as an $\F_p$-Lie algebra.\footnote{The Lie bracket on $\gfrak$ is $\F_q$-bilinear, so if $I$ is an $\F_p$-ideal of $\gfrak$, then $[I, \gfrak]$ is an $\F_q$-ideal contained in $I$, hence $[I, \gfrak] = \gfrak$ and so $I = \gfrak$.} Consequently, if $A$ generates $\gfrak$ as a Lie algebra over $\F_p$, the same polylogarithmic diameter bound applies.
\end{remark}

The method behind the result incorporates ideas from Dona's paper, though our implementation differs. To outline the main steps, define for any subset $X \subseteq \g$,
\[
\ell(X) = \max \{ \abs{X \cap L} \mid L \text{ is a one-dimensional subspace of } \g \}.
\]
Thus, $\ell(X)$ is the maximum number of elements from $X$ on any line in $\g$. We first show that if a generating set $A$ of $\gfrak$ satisfies $\ell(A^k) = p$ for some $k$, then the entire Lie algebra can be covered in a modest number of extra steps. 

\begin{proposition}
    \label[proposition]{prop:full_line}
    Let $\gfrak$ be a nonabelian finite simple Lie algebra over $\F_p$ and $A$ any generating set of $\gfrak$. Assume that $\ell(A^k) = p$ for some $k$. Then $\diam(\gfrak, A) \leq k \dim \gfrak + (\dim \gfrak)^2$.
\end{proposition}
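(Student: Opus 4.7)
The plan is to exploit the hypothesis $\ell(A^k) = p$ to extract a full $\F_p$-line $L_0 = \F_p \cdot w_0 \subseteq A^k$, then inductively build an ascending chain of subspaces $V_0 \subset V_1 \subset \cdots \subset V_{d-1} = \gfrak$ (with $d = \dim \gfrak$) by adjoining at each step a new line obtained as a single bracket $[a, w_j]$ with $a \in A$ and $w_j$ the generator of a previously built line. Each new line will sit in a low-level power of $A$, and every element of $\gfrak$ will finally be expressible as a sum of $d$ line elements, with modest additive cost.

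In detail, the hypothesis yields a nonzero $w_0 \in \gfrak$ with $L_0 := \F_p \cdot w_0 \subseteq A^k$. Setting $V_0 := L_0$, I would prove inductively that for each $i$ with $0 \le i \le d-1$ one can construct a line $L_i = \F_p \cdot w_i \subseteq A^{k+i}$ such that $V_i := L_0 + L_1 + \cdots + L_i$ has dimension $i+1$. For the inductive step, assume $V_i$ has been built and $V_i \neq \gfrak$. Since $\gfrak$ is simple and nonabelian, its only ideals are $0$ and $\gfrak$, so $V_i$ is not an ideal. By the Jacobi identity, the set $N := \{x \in \gfrak : [x, V_i] \subseteq V_i\}$ is a Lie subalgebra; since $A$ generates $\gfrak$ as a Lie algebra and $V_i$ is not an ideal, $N$ cannot contain $A$. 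Hence there is $a \in A$ with $[a, V_i] \not\subseteq V_i$. Writing a witness $u \in V_i$ with $[a, u] \notin V_i$ as $u = \sum_{j=0}^{i} \alpha_j w_j$, some $[a, w_j]$ must lie outside $V_i$; set $w_{i+1} := [a, w_j]$ and $L_{i+1} := \F_p \cdot w_{i+1}$. For every $\alpha \in \F_p$,
\[
\alpha [a, w_j] = [a, \alpha w_j] \in [A^1, A^{k+j}] \subseteq A^{k+j+1} \subseteq A^{k+i+1},
\]
using $L_j \subseteq A^{k+j}$ from the inductive hypothesis; so $L_{i+1} \subseteq A^{k+i+1}$.

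Finally, $V_{d-1} = \gfrak$, so every $x \in \gfrak$ can be written as $x = x_0 + x_1 + \cdots + x_{d-1}$ with $x_i \in L_i \subseteq A^{k+i}$. Iterating $A^a + A^b \subseteq A^{a+b}$ gives
\[
x \in A^{\sum_{i=0}^{d-1}(k+i)} = A^{dk + d(d-1)/2} \subseteq A^{k \dim \gfrak + (\dim \gfrak)^2}.
\]

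The main obstacle is the inductive step, where one must ensure that the new line can be realized as a single bracket $[a, w_j]$ rather than as a deeper iterated bracket; this is what keeps the per-step cost growing by only one at a time. The structural ingredient enabling it is that stability of $V_i$ under $\ad(a)$ for every $a \in A$ would, via the Jacobi identity, force $V_i$ to be an ideal of $\gfrak$, contradicting simplicity together with $0 \neq V_i \neq \gfrak$.
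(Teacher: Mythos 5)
Your proof is correct and arrives at exactly the same bound $dk + d(d-1)/2 \le k\dim\gfrak + (\dim\gfrak)^2$, but it reaches it by a somewhat different and more self-contained route than the paper. The paper routes through the tower machinery: it invokes Proposition~\ref{lem:dim_growth} (which in turn rests on Lemma~\ref{lem:towering}) to produce a basis $v_1,\dots,v_d$ with $v_j \in \tower_{\le j}(A,v)$, and then rescales each $v_j$ by inserting $\alpha_j v \in A^k$ into the unique occurrence of $v$ in the tower expression. Your argument dispenses with towers altogether: starting from the line $L_0 = \F_p\,w_0 \subseteq A^k$, you grow the chain $V_0 \subsetneq V_1 \subsetneq \cdots$ one line at a time, using the observation that the Lie normalizer $N = \{x : [x,V_i] \subseteq V_i\}$ is a subalgebra, so if $A \subseteq N$ then $N = \gfrak$ and $V_i$ would be an ideal, contradicting simplicity. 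This gives a single $a \in A$ and a single previous line generator $w_j$ with $[a,w_j] \notin V_i$, and crucially the new generator $w_{i+1} = [a,w_j]$ is just one bracket deeper, so $L_{i+1} \subseteq A^{k+i+1}$. Your construction has the advantage of making completely explicit that every basis element $w_i$ is a Lie monomial containing $w_0$ exactly once (it is built inductively from $w_0$ by single brackets with elements of $A$), which is the structural fact that makes the rescaling $\alpha w_{i+1} = [a,\alpha w_j]$ go through; the paper's proof relies on the same fact but obtains it through Proposition~\ref{lem:dim_growth}, where it is somewhat less transparent that the basis elements can all be chosen to genuinely involve $v$. The final summation step and the numerical bound are identical in both arguments.
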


This proves rapid generation for Lie algebras when $p$ is fixed. Starting from a generating set $A$, we have $\ell(A^p)= p$, so $\diam(\gfrak, A) \le O_p\left((\dim \gfrak)^2\right) = O_p\left( (\log \abs{\gfrak})^2\right)$.\footnote{For finite simple groups, rapid generation in groups of Lie type over bounded fields and high rank is considerably more challenging and is only known to hold for random generators \cite{eberhard2022babai}.}

To cover the case when $p$ varies, we show that $\ell(A^k) = p$ (or is at least sufficiently large) can be achieved with a small value of $k$. We do this by proving a \emph{product theorem in lines}: if a generating set $A$ does not yield a large enough $\ell(A^k)$, then a few extra steps induce uniform growth in $\ell(A^k)$. Here is the precise statement.

\begin{theorem}[product theorem in lines]
\label[theorem]{line_growth}
For every $\epsilon > 0$ there is a constant $C > 0$ such that for any generating set $A$ of any nonabelian finite simple Lie algebra $\g$ over $\F_p$ and any $k \geq C$, we have
\[
\ell(A^{2k + \dim \gfrak}) \geq \ell(A^k)^{5/4 - \epsilon}
\quad \text{or} \quad
\ell(A^{C(2k + \dim \gfrak)}) = p.
\]
\end{theorem}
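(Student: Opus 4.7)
The plan is to transfer a sum-product phenomenon from $\F_p$ to $\g$ along a ``best'' line. Set $m = \ell(A^k)$ and fix a line $L = \F_p \cdot x$ realising this maximum, so that $A^k \cap L = S \cdot x$ for some $S \subseteq \F_p$ with $|S| = m$. If $m = p$ then the second alternative holds trivially, so I assume $m < p$ and aim for $\ell(A^{2k + \dim \g}) \geq m^{5/4 - \epsilon}$, falling back to the saturation regime only if this bound fails.

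The additive ingredient is immediate from the sum-closure $A^i + A^{k-i} \subseteq A^k$: one has $A^{2k} \cap L \supseteq (S+S) \cdot x$, so $\ell(A^{2k}) \geq |S+S|$. The multiplicative ingredient will come from bracketing with an element $h \in A^{\dim \g}$ that normalises $L$ and acts there as multiplication by some fixed $\mu \in \F_p \setminus \{0, 1\}$. Granting such an $h$, bracketing gives $[h, Sx] = \mu S \cdot x \subseteq A^{k + \dim \g} \cap L$, and summing with the original intersection yields
\[
(S + \mu S) \cdot x \subseteq A^{2k + \dim \g} \cap L.
\]
A sum-product estimate in $\F_p$ (of Bourgain--Glibichuk--Konyagin or Rudnev flavour) then supplies $|S + \mu S| \geq c \, |S|^{5/4 - \epsilon}$ whenever $|S| \leq p^{1-\delta}$ for $\delta$ tuned to $\epsilon$, which is the first alternative. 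In the complementary regime $|S| \geq p^{1-\delta}$, iterating the same sum-and-bracket step a bounded number of times saturates the line, producing the second alternative with a suitable $C = C(\epsilon)$.

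The element $h$ is produced using simplicity of $\g$. Since $L$ cannot be an ideal, the normaliser $N_\g(L) = \{y : [y, L] \subseteq L\}$ is a proper subalgebra, and the character $\chi : N_\g(L) \to \F_p$ defined by $[y, x] = \chi(y) x$ has image an additive subgroup of $\F_p$, hence either zero or surjective. In the surjective case I locate $\dim \g$ Lie-monomials in $A$ of depth $\leq \dim \g$ forming an $\F_p$-basis of $\g$ (which exist because $A$ generates $\g$ as a Lie algebra and the ascending chain of Lie-generated subspaces stabilises in at most $\dim \g$ steps), and pick a basis element lying in $N_\g(L)$ mapped by $\chi$ to some $\mu \neq 0, 1$. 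In the degenerate case $\chi \equiv 0$, I first bracket $L$ by an element from $A^{O(\dim \g)}$ to shift to a new line $L'$ on which the normaliser acts non-trivially; such a line must exist because not every line in a simple $\g$ can have centralising normaliser.

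The main obstacle is precisely this construction of $h$: producing, uniformly over all generating sets $A$ and all lines $L$, an element of $A^{\dim \g}$ with a prescribed non-unit scalar action on $L$. The naive route of realising $h$ as an arbitrary $\F_p$-linear combination of basis monomials would cost a factor of $p$ in depth, which the statement does not allow; avoiding this requires a careful choice of basis adapted to $N_\g(L)$, and handling the degenerate case $\chi \equiv 0$ without losing the $\dim \g$ term in the final bound is the most delicate point.
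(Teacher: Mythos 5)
Your multiplicative step has a fatal flaw that is independent of the (acknowledged) difficulty of constructing $h$. You claim that a sum-product estimate provides $|S + \mu S| \geq |S|^{5/4-\epsilon}$ for a single fixed $\mu \in \F_p \setminus\{0,1\}$. No such estimate exists, and none can: if $S$ is an arithmetic progression and $\mu = 2$, then $|S + 2S| = 3|S| - 2$, which is linear in $|S|$. The Bourgain--Glibichuk--Konyagin and Rudnev theorems bound $\max\{|S+S|, |SS|\}$, where $SS$ is the genuine \emph{product set}; a sum of two dilates $S + \mu S$ is an additive object and cannot exhibit sum-product growth. So even if you could produce the desired $h$, the combinatorial engine driving the iteration would not turn over.

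The paper's proof sidesteps both problems by producing the full product set $XX$ on a \emph{different} line. With $X = \{\alpha : \alpha v \in A^k\}$, Proposition~\ref{lem:dim_growth} gives a nonzero $u \in \tower_{\leq \dim\g}(A, v)$ with $[v,u] \neq 0$, \emph{i.e.} a Lie monomial $u = [a_1, \ldots, a_j, v, a_{j+1}, \ldots, a_{d'-1}]$ in which $v$ appears exactly once. Then $[v, u]$ contains two independent occurrences of $v$, and replacing each by $Xv \subseteq A^k$ gives $(XX)[v,u] \subseteq A^{2k + \dim\g}$ by bilinearity. Since $\ell$ is a maximum over all lines, the growth of $|XX|$ on the line spanned by $[v,u]$ suffices, and the honest sum-product theorem applies. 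This double-substitution device is precisely what your plan is missing: you bracket one copy of $Sx$ against a fixed element $h$, yielding a single-variable dilate, whereas you need two copies of the scalar set entering the bracket. As a secondary issue, your proposed construction of $h \in A^{\dim\g} \cap N_\g(L)$ with $\chi(h) \neq 0$ is unjustified: a basis of $\g$ built from Lie monomials in $A$ has no reason to meet the proper subalgebra $N_\g(L)$ at all (any basis that avoids a hyperplane containing $N_\g(L)$ is a counterexample), and the paper never needs to touch the normaliser of $L$.
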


One-dimensional growth as it appears here was already leveraged by Dona to prove the sum-product theorem in Lie algebras. We extract key elements of their argument and bootstrap it to attain faster growth as given by the theorem, and this eventually leads to the diameter bounds stated above. The constants $2$ and $5/4$ in the theorem arise from applying a sum-product theorem in finite fields \cite{mohammadi2023attaining}.\footnote{Having a stronger sum-product theorem would improve the value of $D$. See Remark~\ref{remark on stronger sum product theorem}.}

\subsubsection{Extremely rapid random generation of classical Lie algebras}

We study classical Lie algebras over $\F_p$ in detail. Every Lie algebra over $\F_p$ that becomes isomorphic to a complex simple Lie algebra base changed to the algebraic closure of $\F_p$ arises (for $p > 3$) from an \emph{$\F_p$-form of a classical Lie algebra}. These come in two flavors:
\begin{description}
    \item[split] obtained from complex simple Lie algebras by base change to $\F_p$, \emph{e.g.}, $\slfrak_n(\F_p)$;
    \item[non-split] arising from symmetries in the Dynkin diagram, \emph{e.g.}, $\sufrak_n(\F_{p^2})$. 
\end{description}
We prove that both types exhibit extremely rapid random generation, although in the non-split case we must exclude a small density subset of primes.

\begin{theorem}
    Let $\gfrak$ be an $\F_p$-form of a classical Lie algebra. 
    
    \begin{enumerate}[leftmargin=*]
    \item 
    If $\gfrak$ is split, then there exists a constant $C > 0$, depending only on the Lie type of $\gfrak$, such that for a uniformly random pair of elements $X, Y \in \gfrak$, we have
    \[
        \diam(\gfrak, \{X, Y\}) \leq C \log \abs{\gfrak}
    \]
    with probability tending to $1$ as $p$ tends to infinity.
    
    \item If $\gfrak$ is non-split, then for every $\epsilon > 0$ there is a set of primes of density at least $1 - \epsilon$ and a constant $C_{\epsilon} > 0$, depending only on the Lie type of $\gfrak$ and $\epsilon$, such that the same conclusion as above holds along primes in this set.
    \end{enumerate}
\end{theorem}

A specific instance of this phenomenon was observed for the Lie algebra $\slfrak_2(\F_p)$ in \cite{jezernik2025random}. Using different methods, we are able to handle any fixed Lie type.\footnote{Extremely rapid random generation also holds for finite simple groups of fixed Lie type, but the argument there is more conceptual: such groups are expanders with high probability \cite{breuillard2015expansion}.} Our approach involves constructing a characteristic $0$ Lie algebra that serves as a covering object for all Lie algebras $\gfrak$ over $\F_p$ of the same type. We show that this covering object contains a generating pair whose Lie operations exhibit exponential growth. Projecting this pair down to each $\gfrak$, we obtain elements that quickly generate a large portion of $\gfrak$. Applying the Schwartz-Zippel lemma, we deduce that the same holds for generic pairs. In the non-split case, several covering objects are required for number-theoretic reasons. We construct these independently to cover almost all primes using the Chebotarev density theorem.

\subsection{Reader's guide}

Rapid generation of nonabelian finite simple Lie algebras is established in \Cref{sec:diameters} in the two-step manner as described above. We recall the details of how classical finite Lie algebras are constructed in \Cref{sec:classical_lie_algebras}, and then prove extremely rapid random generation in \Cref{sec:diameters_classical}.

\subsection{Acknowledgements}

We thank Daniel Smertnig for discussions on number fields, Oliver Roche-Newton for pointing out state of the art sum-product results, and Daniele Dona and Sean Eberhard for helpful comments on an earlier version. We also thank the anonymous referees for their careful reading and suggestions.

\section{Diameters of finite simple Lie algebras}
\label{sec:diameters}

\subsection{Starting with a line}

Following \cite{dona-sum-bracket}, we define, for subsets $X, Y \subseteq \gfrak$, \emph{towers} of Lie brackets recursively as 
\[
    \tower_0(X) = \tower_0(X, Y) = \{0\}, \quad
    \tower_1(X) = \tower_1(X, Y) = X,
\]
and for $k \geq 2$,
\begin{align*}
    \tower_k(X) &= [X, \tower_{k - 1}] \cup [\tower_{k - 1}, X], \\
    \tower_k(X, Y) &= [Y, \tower_{k - 1}(X)] \cup [\tower_{k - 1}(X), Y] 
    \cup [X, \tower_{k - 1}(X, Y)] \cup [\tower_{k - 1}(X, Y), X].
\end{align*}
If $Y = \{y\}$ is a singleton, we write $\tower_k(X, y)$ instead of $\tower_k(X, Y)$. We also define
\[
    \tower_{\leq k}(X) = \bigcup_{j = 0}^k \tower_j(X) 
    \quad \text{and} \quad 
    \tower_{\leq k}(X, Y) = \bigcup_{j = 0}^k \tower_j(X, Y).
\]

Towers built over generating sets quickly produce spanning sets of Lie algebras. 

\begin{proposition}[\cite{dona-sum-bracket}, Proposition 3.3]
    \label[proposition]{dim_growth_dona}
    Let $A$ be a generating set of a Lie algebra $\gfrak$ over a field $K$ of dimension $d$. Then for any $k \in \N$ the set $\tower_{\leq k}(A)$ spans a vector subspace of $\gfrak$ of dimension at least $\min\{d, k\}$.
\end{proposition}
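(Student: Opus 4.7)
The plan is to induct on $k$, with the claim reformulated as $\dim V_k \geq \min\{d, k\}$ for $V_k := \lin(\tower_{\leq k}(A))$. The base cases are immediate: $V_0 = \{0\}$, and $V_1 = \lin(A) \neq 0$ since $A$ generates the (nonzero) Lie algebra $\gfrak$. For the inductive step I would prove the following strict-growth dichotomy: either $V_k = \gfrak$, or $V_{k+1} \supsetneq V_k$. Iterating this from $\dim V_0 = 0$ yields $\dim V_k \geq \min\{d, k\}$ directly, and no further bookkeeping is needed.

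To establish the dichotomy, I would argue by contrapositive: assume $V_{k+1} = V_k$ and aim to show $V_k = \gfrak$. A secondary induction on $j$ shows that $V_{k+j} = V_k$ for every $j \geq 0$. For the step, any element of $\tower_{k+j+1}(A)$ has the form $[a, x]$ or $[x, a]$ with $a \in A$ and $x \in \tower_{k+j}(A) \subseteq V_k$; writing $x$ as a linear combination of elements of $\tower_{\leq k}(A)$, each resulting bracket lies in $[A, \tower_{\leq k}(A)] \cup [\tower_{\leq k}(A), A] \subseteq \tower_{\leq k+1}(A) \subseteq V_{k+1} = V_k$. Taking the union over $j$ yields $\bigcup_{j \geq 0} V_j = V_k$.

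It remains to identify $\bigcup_{j \geq 0} V_j$ with $\gfrak$, which is the only place where the generation hypothesis enters and the only mildly subtle point of the argument. The Lie subalgebra generated by $A$ equals, via Jacobi reduction, the linear span of all right-normed iterated brackets $[a_1, [a_2, [\ldots, a_n]\ldots]]$ with $a_i \in A$; these brackets all lie in $\bigcup_j \tower_{\leq j}(A)$, so its linear span contains the subalgebra generated by $A$, which is $\gfrak$ by assumption. This forces $V_k = \gfrak$, completes the dichotomy, and closes the induction. I do not anticipate any genuine obstacle; the one place to be careful is noting that the tower only records brackets in which one slot at each nesting level is a single element of $A$, which is exactly why the Jacobi reduction to right-normed brackets is needed in the last step.
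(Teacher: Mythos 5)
Your proof is correct. The paper itself only cites this proposition from Dona's paper without reproving it, but the strategy you give is sound and self-contained: once $V_{k+1} = V_k$, a secondary induction on $j$ shows $V_{k+j} = V_k$ for all $j$, so $V_k = \bigcup_j V_j$, and then the Jacobi reduction to right-normed brackets identifies $\bigcup_j V_j$ with the subalgebra generated by $A$, which is $\gfrak$. You correctly pinpoint the one place that needs care: a priori, $\tower_{\leq k}(A)$ only records brackets where one slot at each nesting level is a single element of $A$, so to conclude that $\bigcup_j \lin\tower_{\leq j}(A)$ is all of $\gfrak$ (equivalently, is a subalgebra) you must rewrite an arbitrary iterated bracket as a combination of right-normed ones.

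It is worth noting how this compares to the argument the paper does give for the refined \Cref{lem:dim_growth}, which is the same dichotomy (stagnation implies stabilization) but concludes by a different route: there the authors show that if the tower stagnates then $V_k$ is an ideal, and invoke simplicity of $\gfrak$ to force $V_k = \gfrak$. That route is not available for \Cref{dim_growth_dona} since simplicity is not assumed, and your use of the standard right-normed spanning fact is the natural replacement; it is in fact a cleaner hypothesis to lean on here, since only the generation hypothesis is used.
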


We now show that for any generating set $A$ and any nonzero element $b \in \gfrak$, the relative tower $\tower_{\leq d}(A, b)$ spans the entire Lie algebra. While \cite[Corollary 3.4]{dona-sum-bracket} established this result for $\tower_{\leq 2d}(A, b)$, it is noted there that the bound can be refined to $d$. Below, we provide this refinement, although it is not essential for our argument. For a subset $S$ of a vector space, we set $\lin S$ to denote the linear span of $S$.

\begin{lemma}
    \label[lemma]{lem:towering}
    Let $\liealg$ be a Lie algebra over a field $K$, let $A \subseteq \liealg$ be a subset of $\liealg$ and let $b \in \liealg$ be an element of $\liealg$. Then for any $m, n \in \N$ we have
    \[
        [\tower_m(A, b), \tower_n(A)] \subseteq \lin [A, \tower_{m + n - 1}(A, b)].
    \]
\end{lemma}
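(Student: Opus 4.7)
The plan is to induct on $n$, with $m$ treated as a free parameter that shifts during the induction. The sole tool is the Jacobi identity, which lets us peel one $A$-element off the $\tower_n(A)$ side and redistribute it, thereby reducing to a strictly smaller value of $n$.

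The base case $n \leq 1$ is immediate: if $n = 0$ then $\tower_0(A) = \{0\}$ and the inclusion is vacuous, while if $n = 1$ then for any $s \in \tower_m(A, b)$ and $t' \in \tower_1(A) = A$ we simply observe $[s, t'] = -[t', s] \in [A, \tower_m(A, b)]$, which equals $[A, \tower_{m+n-1}(A, b)]$ since $m + n - 1 = m$.

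For the inductive step, assume the statement for $n - 1$ and every $m$. Given $s \in \tower_m(A, b)$ and $t' \in \tower_n(A)$, the recursive definition of the tower lets us write $t' = \epsilon [a, t'']$ with $\epsilon \in \{\pm 1\}$, $a \in A$ and $t'' \in \tower_{n-1}(A)$. The Jacobi identity then gives
\[
    [s, [a, t'']] = [[s, a], t''] + [a, [s, t'']].
\]
The first summand is controlled by noting that $[s, a] \in [\tower_m(A, b), A] \subseteq \tower_{m+1}(A, b)$ by the definition of $\tower$, so $[[s, a], t''] \in [\tower_{m+1}(A, b), \tower_{n-1}(A)]$; applying the inductive hypothesis with parameters $(m + 1, n - 1)$ places this in $\lin [A, \tower_{m+n-1}(A, b)]$. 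The second summand is handled by applying the inductive hypothesis directly to $[s, t''] \in [\tower_m(A, b), \tower_{n-1}(A)]$, which puts it in $\lin [A, \tower_{m+n-2}(A, b)]$, and then invoking the defining inclusion $[A, \tower_{m+n-2}(A, b)] \subseteq \tower_{m+n-1}(A, b)$.

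The argument has no real obstacle: the content is pure bookkeeping on tower indices, matched to the two branches of the Jacobi identity. The sharp exponent $m + n - 1$ (rather than $m + n$) appears precisely because the induction can absorb replacing $m$ by $m + 1$ without costing an extra level, and this is what yields the improvement over the naive bound $\tower_{\leq 2d}(A, b)$ in \cite[Corollary~3.4]{dona-sum-bracket}.
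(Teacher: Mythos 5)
Your proof is correct and follows essentially the same route as the paper's: induction on $n$, one application of the Jacobi identity to peel an element of $A$ off the $\tower_n(A)$ factor, with the two resulting summands absorbed via the inductive hypothesis applied at $(m+1, n-1)$ and $(m, n-1)$ respectively. The only cosmetic differences are that you explicitly spell out the $n \leq 1$ base cases (the paper simply calls $n = 1$ trivial) and you state outright that the inductive hypothesis is quantified over all $m$, which the paper leaves implicit; the index bookkeeping and the use of $[A, \tower_{m+n-2}(A,b)] \subseteq \tower_{m+n-1}(A,b)$ are identical.
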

\begin{proof}
    Induction on $n$. For $n = 1$ the statement is trivial. Let $s \in \tower_m(A, b)$ and $t \in \tower_{n + 1}(A)$. Then up to a sign $t = [a, t']$ for some $a \in A$ and $t' \in \tower_n(A)$. The Jacobi identity gives
    \[
        [s, t] = [s, [a, t']] = [[s, a], t'] + [a, [s, t']].
    \]
    By the induction hypothesis we have $[[s, a], t'] \in \lin [A, \tower_{m + n}(A, b)]$ and $[s, t'] \in \lin [A, \tower_{m + n - 1}(A, b)] \subseteq \lin \tower_{m + n}(A, b)$. Thus we get 
    \[
        [s, t] \in \lin [A, \tower_{m + n}(A, b)]
    \]
    and the proof is complete.
\end{proof}

\begin{proposition}
    \label[proposition]{lem:dim_growth}
    Let $\mathfrak{g}$ be a simple Lie algebra over a field $K$ of dimension $d$. Let $A$ be a generating set of $\mathfrak{g}$ and let $b \in \liealg$ be a nonzero element. Then the set $\tower_{\leq k}(A, b)$ spans a vector subspace of $\liealg$ of dimension at least $k$ for any $k \leq d$. In particular, the set $\tower_{\leq d}(A, b)$ spans $\liealg$.
\end{proposition}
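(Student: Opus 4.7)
The plan is to prove the dimension bound by induction on $k$, using the simplicity of $\mathfrak{g}$ to force $\dim V_k$, where $V_k := \lin \tower_{\leq k}(A, b)$, to keep growing by at least one at each step until it saturates at $d$. The target is $\dim V_k \geq k$ for all $k \leq d$, with $V_d = \mathfrak{g}$ as a consequence.

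The base case $k = 1$ is immediate: $V_1 \supseteq \lin A \neq 0$ because $A$ generates the nonzero Lie algebra $\mathfrak{g}$. For the inductive step, assume $\dim V_{k-1} \geq k - 1$ with $2 \leq k \leq d$ and split into two cases. If $V_k \supsetneq V_{k-1}$, then $\dim V_k \geq k$ and the step is complete. Otherwise $V_k = V_{k-1}$, and the plan is to show that this equality forces $V_{k-1}$ to be a nonzero ideal of $\mathfrak{g}$: simplicity then yields $V_{k-1} = \mathfrak{g}$ and thus $\dim V_k = d \geq k$.

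The key observation is that the last two pieces of the recursive definition of $\tower_k(A, b)$ are $[A, \tower_{k-1}(A, b)]$ and $[\tower_{k-1}(A, b), A]$. Hence $[A, \tower_{\leq k-1}(A, b)] \subseteq \tower_{\leq k}(A, b)$, and on linearizing, $[A, V_{k-1}] \subseteq V_k$. Under the assumption $V_k = V_{k-1}$ this says that each $a \in A$ lies in the Lie normalizer $N_{\mathfrak{g}}(V_{k-1}) := \{x \in \mathfrak{g} : [x, V_{k-1}] \subseteq V_{k-1}\}$, which is a Lie subalgebra of $\mathfrak{g}$. Since $A$ generates $\mathfrak{g}$ as a Lie algebra, $N_{\mathfrak{g}}(V_{k-1}) = \mathfrak{g}$, so $V_{k-1}$ is an ideal of $\mathfrak{g}$, and it is nonzero because $V_{k-1} \supseteq V_1 \supseteq \lin A \neq 0$.

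The main subtlety, though conceptual rather than computational, is to correctly exploit that $A$ generates $\mathfrak{g}$ as a Lie algebra and not merely as a vector space: this is what turns the $A$-invariance of $V_{k-1}$ into an ideal condition via the normalizer. Notably, this refined argument sidesteps \Cref{lem:towering} entirely; simplicity alone supplies the rigidity that sharpens the previous bound $\tower_{\leq 2d}(A, b)$ from \cite[Corollary 3.4]{dona-sum-bracket} down to $\tower_{\leq d}(A, b)$.
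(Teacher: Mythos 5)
Your proof is correct and takes a genuinely different, cleaner route than the paper's. The paper's argument proceeds in two technical stages once it assumes $V_{k+1} = V_k$: first it uses \Cref{lem:towering} to show $V_{k+2} = V_k$ (hence $V_l = V_k$ for all $l \geq k$), and then it combines \Cref{dim_growth_dona} with \Cref{lem:towering} a second time to prove $[\gfrak, V_k] \subseteq V_k$, before invoking simplicity. You instead observe directly that $V_k = V_{k-1}$ together with the built-in inclusion $[A, V_{k-1}] \subseteq V_k$ puts $A$ inside the normalizer $N_\gfrak(V_{k-1})$, and since the normalizer is a Lie subalgebra and $A$ generates $\gfrak$ as a Lie algebra, $V_{k-1}$ is automatically an ideal. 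This eliminates any need for \Cref{lem:towering} and \Cref{dim_growth_dona} in the proof of this proposition; the normalizer trick is essentially the standard Lie-theoretic way to upgrade $A$-invariance to $\gfrak$-invariance, and it is more self-contained than the paper's route. The one cosmetic remark: you take $V_1 = \lin A$ from the paper's displayed definition $\tower_1(X,Y) = X$, whereas the paper's own proof treats $\tower_1(A,b)$ as $\{b\}$ (it asserts $\dim V_1 = 1$, and its application of \Cref{lem:towering} to the bracket $[b,t']$ presupposes $b \in \tower_1(A,b)$); this appears to be a typo in the paper's definitions, and either reading gives $V_1 \neq 0$, which is all your argument needs.
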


\begin{proof}
    Define $V_k = \lin(\tower_{\leq k}(A, b))$ for any $k \in \N$. Since $b \neq 0$, we have $\dim V_1 = 1$, so it is enough to show that $\dim V_{k + 1} \geq \min\{d, \dim V_k + 1\}$ for any $k \in \N$. 
    
    Assume that $\dim V_{k + 1} < \dim V_k + 1$. Then $V_{k + 1} = V_k$. Now take any element $v \in \tower_{\leq k + 2}(A,b)$. Up to a sign we either have $v = [a, t]$ for some $a \in A$ and $t \in \tower_{\leq k+1}(A,b)$, or $v = [b, t']$ for some $t' \in \tower_{\leq k+1}(A)$. In the former case we have $t \in V_{k + 1} = V_k$ and thus $v \in [a, V_k] \subseteq V_{k + 1} = V_k$. In the latter case we use Lemma~\ref{lem:towering} to get
    \[
        v \in \lin [A, \tower_{\leq k + 1}(A, b)] \subseteq \lin [A, V_{k + 1}] = \lin [A, V_k] \subseteq V_{k + 1} = V_k.
    \]
    Therefore $V_{k + 2} = V_k$ and thus $V_l = V_k$ for any $l \geq k$. It now follows by \Cref{dim_growth_dona} and \Cref{lem:towering} that 
    \begin{align*}
        [\liealg, V_k] &\subseteq \lin [\tower_{\leq d}(A), \tower_{\leq k}(A, b)] \subseteq \lin [A, \tower_{\leq d + k - 1}(A, b)]\\ 
        &\subseteq \lin \tower_{\leq d + k}(A, b) = V_{d + k} = V_k,
    \end{align*}
    which implies that $V_k$ is an ideal of $\liealg$. By simplicity of $\liealg$ we obtain $V_k = \liealg$, hence $\dim V_k = d$.
\end{proof}

We are now ready to prove the main result of this section, which is that of transporting a line covered by $A^k$ to independent directions and thus covering the whole Lie algebra.

\begin{proof}[Proof of \Cref{prop:full_line}]
    Suppose that $A^k$ contains a line spanned by a vector $v \in \gfrak$. By Proposition \ref{lem:dim_growth}, the Lie algebra $\liealg$ has a basis consisting of vectors $v_1, \ldots, v_d$, where $v_j \in \tower_{\leq j}(A, v) \subseteq A^j$. Take any element $u = \sum_{j = 1}^d \alpha_j v_j$ from $\liealg$. For any $j$ we can write
    \[
        v_j = [a_1, \ldots, a_l, v, a_{l + 1}, \ldots, a_{j'-1}] \quad \text{for some } a_i \in A, \ j' \leq j
    \]
    and since $\alpha_j v \in A^k$ we have $\alpha_j v_j = [a_1, \ldots, a_l, \alpha_j v, a_{l + 1}, \ldots, a_{j'-1}] \in A^{k + j - 1}$. It then follows that $u \in A^{kd + d(d - 1)/2}$.
\end{proof}

\subsection{Covering a line}

Our objective now is to show that we can achieve that $\ell(A^k)$ is large with a reasonably small $k$. The main input driving growth is the sum-product theorem for finite fields in the following form.

\begin{theorem}
    \label[theorem]{thm:sum_product}
    For every $\epsilon > 0$ there are $C, c > 0$ so that for every subset $X \subseteq \F_p$:
    \begin{enumerate}
        \item \cite{mohammadi2023attaining} If $C \leq \abs{X} \leq c p^{1/2}$, then $\max \{ \abs{X+X}, \abs{XX} \} \geq \abs{X}^{5/4 - \epsilon}$.
        \item \cite[Corollary 4]{roche-sum-product} If $\abs{X} > p^{4/7}$, then $\abs{XX + XX + XX} \geq c p$.
    \end{enumerate}
\end{theorem}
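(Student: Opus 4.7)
The statement is a direct quotation of two results from the sum-product literature over prime fields, so my plan is to invoke them rather than reprove them.

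For part (1), I would apply the Mohammadi--Stevens bound from \cite{mohammadi2023attaining}. Their proof follows the standard Elekes-type template: a hypothetical violation $\max\{\abs{X+X}, \abs{XX}\} < \abs{X}^{5/4-\epsilon}$ is translated into a configuration of points and lines in $\F_p^2$ whose incidence count exceeds a Stevens--de Zeeuw-type incidence bound, forcing a contradiction. The upper cutoff $c p^{1/2}$ is natural, since it is precisely the regime in which the prime-field incidence bound is tight and where no modular obstruction interferes.

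For part (2), I would quote \cite[Corollary 4]{roche-sum-product} directly. Once $\abs{X} > p^{4/7}$, a combination of Pl\"unnecke--Ruzsa inequalities with the Bourgain--Glibichuk--Konyagin sum-product expansion shows that a bounded number of additive and multiplicative operations on $X$ recovers a set of size $\geq cp$; in this reference, three rounds of multiplication followed by additions suffice, yielding $\abs{XX+XX+XX} \geq cp$. The exponent $4/7$ reflects the state of the art for the relevant large-set incidence bounds in $\F_p^2$.

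The only routine check in the proposal is that the constants $C$ and $c$ can be chosen uniformly in $\epsilon$ and independently of $X$; both cited references state their results in exactly this form, so nothing further is needed. The genuinely hard work --- namely, sharp incidence geometry over prime fields --- is entirely outsourced to the cited papers, and that is also where the main obstacle would lie if one attempted to reprove the theorem from scratch.
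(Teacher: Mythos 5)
Your proposal is correct and matches the paper exactly: the theorem is stated as a citation of \cite{mohammadi2023attaining} and \cite[Corollary 4]{roche-sum-product}, and no independent proof is given in the paper. The additional sketch of the Elekes-type incidence argument and the Pl\"unnecke--Ruzsa plus BGK machinery is accurate background but is outsourced to the references in both your proposal and the paper.
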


We first prove the product theorem in lines. The argument is similar to the one in \cite[Proof of Theorem 1.1, Step 2]{dona-sum-bracket}.

\begin{proof}[Proof of \Cref{line_growth}]
    Let $A$ be a generating set of $\gfrak$, let $\epsilon > 0$ and let $k \in \N$. Take $v \in \gfrak$ to be a nonzero element such that the maximum $\ell(A^k)$ is attained at the line spanned by $v$, \emph{i.e.}, the set $X = \{\alpha \in \F_p \mid \alpha v \in A^k\}$ has size $\abs{X} = \ell(A^k)$. Let $C, c > 0$ be the constants from the sum-product theorem above.

    The set $A^{2k}$ contains the set $(X+X)v$. Let $u \in \gfrak$ be any nonzero element such that $[v, u] \neq 0$ (such an element exists since $\gfrak$ is centerless). By Proposition~\ref{lem:dim_growth}, the element $u$ is a linear combination of elements from $\tower_{\leq \dim \gfrak}(A, v)$, so by bilinearity of the Lie bracket we can assume that $u \in \tower_{\leq \dim \gfrak}(A, v)$. Thus we can write 
    \[
        u = [a_1, \ldots, a_j, v, a_{j + 1}, \ldots, a_{d'-1}] \quad \text{for some } a_i \in A, \ d' \leq \dim \gfrak.
    \]
    and hence
    \[
        (XX)[v, u] = [Xv, [a_1, \ldots, a_j, Xv, a_{j + 1}, \ldots, a_{d'-1}]] \subseteq A^{2k + \dim \gfrak}.
    \]
    Therefore the set $A^{2k + \dim \gfrak}$ contains both $(X+X)v$ and $(XX)[v,u]$, hence
    \[
        \ell(A^{2k + \dim \gfrak}) \geq \max\{\abs{X+X}, \abs{XX}\}.
    \]
    
    We now invoke the sum-product theorem above. Since $\abs{X} \geq k$, taking $k > C$ ensures that $\abs{X} > C$. If $\abs{X} \leq c p^{1/2}$, then we are in the regime of the first part of the sum-product theorem, and we obtain $\ell(A^{2k + \dim \gfrak}) \geq \abs{X}^{5/4 - \epsilon}$, as desired. Suppose now that $\abs{X} > c p^{1/2}$. Applying the sum-product theorem to a subset of $X$ of size $\floor{c p^{1/2}}$, we get $\ell(A^{2k + \dim \gfrak}) \geq c^{5/4 - \epsilon} p^{5/8 - \epsilon} > p^{4/7}$ for sufficiently large $p$ (depending only on $\epsilon$). It now follows from the second part of the sum-product theorem that $\ell(A^{6(2k + \dim \gfrak)}) \geq cp$. The Cauchy-Davenport theorem \cite[Theorem 5.4]{tao2006additive} implies that $\ell(A^{C'(2k + \dim \gfrak)}) = p$ for some constant $C'$ depending only on $\epsilon$, and we are done.
\end{proof}

Let us now show how this phenomenon can be iterated to achieve a large enough $\ell(A^k)$ and obtain the stated diameter bound.

\begin{proof}[Proof of Theorem \ref{thm:babai_conjecture}]
    Let $\epsilon > 0$, let $d$ be the dimension of $\liealg$ over $\F_p$, and set $\alpha = \ceil{ \max \{ 1 / \epsilon, C, 3 \} }$, where $C$ is the constant appearing in \Cref{line_growth}. Note that for every integer $h \geq \alpha d$ we have $\floor{(2 + \epsilon)h} \geq 2h + d$. Thus, the product theorem in lines gives 
    \[
        \ell(A^{\floor{(2 + \epsilon)h}}) \geq \ell(A^{h})^{5/4 - \epsilon} \quad \text{or} \quad \ell(A^{C \floor{(2 + \epsilon)h}}) = p.
    \]
    We use the above $k$ times with $h = \alpha d$, $\lfloor (2+\epsilon) \alpha d \rfloor$, $\lfloor (2+\epsilon)^2  \alpha d \rfloor$, \emph{etc}. Supposing that after $k$ steps, we have $\ell(A^{C \lfloor (2 + \epsilon)^k \alpha d \rfloor}) < p$, then we must be in the regime of the first case each time, and hence, as $\ell(A^{\alpha d}) \geq \alpha d$, we obtain
    \[
        \ell(A^{\lfloor (2+\epsilon)^k \alpha d \rfloor}) \geq (\alpha d)^{(5/4 - \epsilon)^k}.
    \]
    Taking $k = \ceil{\log\log(p) / \log(5/4 - \epsilon)}$, the right hand side exceeds $p$. Therefore, we must have $\ell(A^{C\floor{(2 + \epsilon)^k \alpha d}}) = p$ for this value of $k$. \Cref{prop:full_line} now implies
    \[
        \diam(\gfrak, A) 
        \leq C \floor{(2 + \epsilon)^k \alpha d} d + d^2 
        = O_{\epsilon}\left(d^2 (\log p)^{\log(2 + \epsilon) / \log(5/4 - \epsilon)}\right).
    \]
    By taking $\epsilon$ small, the exponent approaches $\log 2 / \log(5/4)$.
\end{proof}

\begin{remark} \label{remark on stronger sum product theorem}
If a stronger sum-product theorem than \Cref{thm:sum_product} were available, the exponent $D$ in \Cref{thm:babai_conjecture} would improve accordingly. More precisely, suppose that for some $\alpha > 1$ the following held: for every $\epsilon > 0$ there are constants $C,c > 0$ such that every subset $X \subseteq \F_p$ with $C \leq \abs{X} \leq p^c$ satisfies $\max\{\abs{X+X},\abs{XX}\} \geq \abs{X}^{\alpha - \epsilon}$. Then the same argument as above would yield
\[
    \diam(\gfrak,A)
    =
    O_\epsilon\left((\dim \gfrak)^2(\log p)^{\log 2/\log(\alpha-\epsilon)}\right).
\]
In particular, any exponent $\alpha > \sqrt{2}$ would bring the logarithmic exponent below $2$. Since our proof relies on \Cref{prop:full_line}, it cannot produce a bound that is subquadratic in $\dim \gfrak$. Thus $D=2$ is the natural limit of this particular strategy. 

This can be compared with the much stronger sum-product conjecture of Erdős \cite{erdos1976some} that predicts near-quadratic growth ($\alpha = 2 - o(1)$) in the setting of the reals. Recent work of Bloom--Sawin--Schildkraut--Zhelezov \cite{bloom2026sum} shows that this strongest form is false, and also gives related counterexamples in finite fields. For the purposes of the present method, near-quadratic sum-product growth is not needed, but it does set a barrier for the method.

We nevertheless believe that the true diameter bound for simple Lie algebras should be stronger, and we are not aware of any counterexamples that even $D=1$ might suffice. Below are two examples that illustrate how such extremely rapid generation can be achieved in some natural situations.
\end{remark}

\begin{example}    
    First, consider $\slfrak_2(\F_p)$ with its standard generators $e$ and $f$. One can generate the entire one-dimensional subspace spanned by $e$ in $O(\log p)$ steps: the operations $\ad_{[e,f]}$ and adding $e$ act on this line as multiplication by $2$ and translation by $1$, respectively. This is much faster than the general bound coming from the product theorem in lines, and since the dimension is fixed, Proposition \ref{prop:full_line} then yields extremely rapid generation. We show in the following sections that this phenomenon is generic for classical Lie algebras of fixed type.
    
    As a second, more intricate example with unbounded dimension, consider the Witt algebra\footnote{The \emph{Witt algebra} $W(p)$ is a $p$-dimensional simple Lie algebra over $\F_p$, generated by $e_{-1},\dots,e_{p-2}$ with Lie bracket $[e_i, e_j] = (j-i) e_{i+j}$ when $-1 \leq i+j \leq p-2$, and $[e_i, e_j] = 0$ otherwise.} $W(p)$ with standard generators $e_{-1}$ and $e_2$. The same method fills each line spanned by $e_i$ for $i \le 2$ in $O(\log p)$ steps. Now, any element of the form $\sum_{j=2}^k \alpha_j e_j$ can be produced in $O(k\log p)$ steps: the base case $k=2$ is immediate, and the inductive step follows from 
    \[
        \textstyle \sum_{j=2}^{k+1} \alpha_j e_j
        = \alpha_2 e_2 + \left[e_1,\sum_{j=2}^k (j-1)^{-1}\alpha_{j+1} e_j\right].
    \]
    The diameter with respect to $e_{-1},e_2$ is thus $O(p\log p)$, which is $O(\log\abs{W(p)})$.
\end{example}

\begin{remark}
    The same arguments used to establish diameter bounds for nonabelian finite simple Lie algebras apply equally to finite-dimensional simple associative algebras over $\F_p$. By Wedderburn-Artin's theorem, such algebras are matrix algebras over finite extensions of $\F_p$, and these hence admit the same polylogarithmic diameter bounds with respect to any generating set.
\end{remark}

\section{Classical finite Lie algebras}
\label{sec:classical_lie_algebras}

In this section, we provide a detailed overview of the classical Lie algebras over finite fields, with particular emphasis on their forms over $\F_p$.

\subsection{Classical Lie algebras}

The finite-dimensional simple Lie algebras over the complex numbers $\C$ are well-known and they are classified into the following families:
\[
\slfrak_n(\C), \
\sofrak_{2n}(\C), \
\spfrak_{2n}(\C), \
\sofrak_{2n + 1}(\C), \
\gfrak_2(\C), \
\ffrak_4(\C), \
\efrak_6(\C), \
\efrak_7(\C), \
\efrak_8(\C).
\]
Each family corresponds to a root system $\Phi$, which can be represented combinatorially by a Dynkin diagram $D$.

Analogues of these Lie algebras exist over any field (including fields of positive characteristic), and there is a unified process for constructing them. We begin with the complex Lie algebra $\gfrak(\C)$ and its \textit{Chevalley basis} $\basis$ \cite{fulton2013representation}, which consists of
\[ \{ e_\alpha \mid \alpha \in \Phi \} \cup \{ h_\alpha \mid \alpha \in \Delta \}, \]
where $\Delta$ is a set of \emph{simple roots}, also referred to as a \emph{base} for $\Phi$. The key property of Chevalley bases is that all the structure constants of the Lie algebra $\gfrak(\C)$ with respect to this basis are integers. Consequently,  the lattice
\[\gfrak(\Z) = \langle \basis \rangle_\Z \subseteq \gfrak(\C)\]
spanned by the Chevalley basis also has all its structure constants with respect to $\basis$ which are integers. Moreover, this lattice is independent of the choice of $\Delta$ and is closed under the Lie bracket. Using it, we can define for any field $K$ the Lie algebra
\[
\gfrak(K) = \gfrak(\Z) \otimes_\Z K. 
\]
When the field $K$ is algebraically closed and of positive characteristic, all such algebras $\gfrak(K)$ are collectively referred to as \emph{classical Lie algebras}.

There are, however, more finite-dimensional simple Lie algebras over an algebraically closed field of positive characteristic than just the classical ones (factored by their centre). \emph{Filtered Lie algebras of Cartan type} are certain deformations of finite dimensional graded Lie algebras of Cartan type. The most well-known example of these is the Witt algebra $W(p)$ that has no finite dimensional analogue in characteristic $0$. In the low characteristic $p = 5$, there is another series of finite-dimensional simple Lie algebras called \emph{Melikian Lie algebras}. The classification theorem of Block-Wilson-Strade-Premet \cite{strade2004simple} states that these Lie algebras are the only finite-dimensional simple Lie algebras provided that $p > 3$. 

\subsection{Forms of classical Lie algebras over finite fields}

The finite-dimensional simple Lie algebras over $\F_p$ are not fully classified. However, we can describe the $\F_p$-Lie algebras that become a simple classical Lie algebra after extension of scalars to the algebraic closure $\overline{\F_p}$. For $p > 3$, these all arise, after quotienting by the center, from \emph{$\F_p$-forms of the classical Lie algebras} listed above, \emph{i.e.}, those $\F_p$-Lie algebras that become isomorphic to $\gfrak(\overline{\F_p})$ after extension of scalars. In this paper, we focus exclusively on these forms, and in particular we do not say anything about Lie algebras of Cartan type (see \Cref{remark:witt}).

A form of $\gfrak(\overline{\F_p})$ is said to be \emph{split} if it is isomorphic as an $\F_p$-algebra to $\gfrak(\F_p)$, constructed via the Chevalley basis as above. For some classical root systems, there also exist non-split forms, which arise from automorphisms of the corresponding Dynkin diagrams. Both types of forms can be constructed using a Galois descent technique starting from $\gfrak(\overline{\F_p})$, as explained in \cite[Chapter IV]{seligman2012modular}. 

For our purposes, we require an analogue of the Chevalley basis for the non-split $\F_p$-forms. As such a basis does not appear to be readily available in the literature, we follow the original construction of Steinberg \cite{steinberg1959variations} and provide all the necessary details here.

\subsubsection{Steinberg's construction}

Let $\gfrak(\C)$ be a classical Lie algebra, and let $D$ be its Dynkin diagram. Suppose that $D$ admits a symmetry, which we denote by $\vartheta$. The only possibilities in which $\vartheta$ is nontrivial are the following (see \Cref{fig:dynkin-labeling}).
\begin{itemize}
    \item The type is $A_n$ ($n \geq 2$), and $\vartheta$ is the horizontal flip of $D$ that swaps the nodes $i$ and $n + 1 - i$ for $1 \leq i \leq n$.
    \item The type is $D_n$ ($n \geq 4$), and $\vartheta$ is the vertical flip of $D$ that swaps the nodes $n - 1$ and $n$.
    \item The type is $D_4$, and $\vartheta$ is the automorphism of $D$ that cyclically permutes the nodes $1, 3, 4$ (or in reverse order) and fixes the node $2$. 
    \item The type is $E_6$, and $\vartheta$ is the vertical flip of $D$ that swaps the nodes $1$ and $6$, and the nodes $2$ and $5$, while fixing the nodes $3$ and $4$.
\end{itemize}
\begin{figure}[t]
\centering
\begin{tikzpicture}[
    scale=1.5,
    dot/.style={circle, fill=black, inner sep=1.5pt},
    edge/.style={thick},
    num/.style={font=\scriptsize, inner sep=1pt},
    symoneway/.style={->, >=stealth, bend left=45, dashed},
    symtwowaysright/.style={<->, >=stealth, bend right=45, dashed},
    symtwowaysleft/.style={<->, >=stealth, bend left=45, dashed}
]

\begin{scope}[xshift=0cm]
\node[dot,label={[num]above:$1$}] (A1) at (0,0) {};
\node[dot,label={[num]above:$2$}] (A2) at (0.45,0) {};
\node[dot,label={[num]above:$3$}] (A3) at (0.9,0) {};
\draw[edge] (A1)--(A2)--(A3);
\draw[symtwowaysright] (A1) to (A3);
\end{scope}

\begin{scope}[xshift=1.75cm]
\node[dot,label={[num]above:$1$}] (D51) at (0,0) {};
\node[dot,label={[num]above:$2$}] (D52) at (0.45,0) {};
\node[dot,label={[num]above left:$3$}] (D53) at (0.9,0) {};
\node[dot,label={[num]above:$4$}] (D54) at (1.125,0.4005) {};
\node[dot,label={[num]below:$5$}] (D55) at (1.125,-0.4005) {};
\draw[edge] (D51)--(D52)--(D53)--(D54);
\draw[edge] (D53)--(D55);
\draw[symtwowaysleft] (D54) to (D55);
\end{scope}

\begin{scope}[xshift=4.4cm]
\node[dot,label={[num]above left:$2$}] (D42) at (0,0) {};
\node[dot,label={[num]above left:$1$}] (D41) at (-0.45,0) {};
\node[dot,label={[num]above:$3$}] (D43) at (0.225,0.4005) {};
\node[dot,label={[num]below:$4$}] (D44) at (0.225,-0.4005) {};
\draw[edge] (D41)--(D42);
\draw[edge] (D42)--(D43);
\draw[edge] (D42)--(D44);
\draw[symoneway] (D41) to (D43);
\draw[symoneway] (D43) to (D44);
\draw[symoneway] (D44) to (D41);
\end{scope}

\begin{scope}[xshift=5.5cm]
\node[dot,label={[num]above:$1$}] (E1) at (0,0) {};
\node[dot,label={[num]above:$2$}] (E2) at (0.45,0) {};
\node[dot,label={[num]above left:$3$}] (E3) at (0.9,0) {};
\node[dot,label={[num]above:$4$}] (E4) at (0.9,0.45) {};
\node[dot,label={[num]above:$5$}] (E5) at (1.35,0) {};
\node[dot,label={[num]above:$6$}] (E6) at (1.8,0) {};
\draw[edge] (E1)--(E2)--(E3)--(E5)--(E6);
\draw[edge] (E3)--(E4);
\draw[symtwowaysright] (E1) to (E6);
\draw[symtwowaysright] (E2) to (E5);
\end{scope}

\end{tikzpicture}
\caption{Symmetries of Dynkin diagrams of types $A_3$, $D_5$, $D_4$ and $E_6$.}
\label{fig:dynkin-labeling}
\end{figure}
In all these cases, $\vartheta$ defines an automorphism of $\gfrak(\C)$ that preserves the lattice $\gfrak(\Z)$ with the property that, for every element of the Chevalley basis,
\[ \vartheta(h_\alpha) = h_{\vartheta\alpha} \quad \hbox{and} \quad \vartheta(e_\alpha) = \pm e_{\vartheta\alpha} ,\]
where the appropriate sign for $\vartheta(e_\alpha)$ is given in \cite[Lemma 3.2 and Section 10]{steinberg1959variations}. In particular, if $\alpha \in \Delta$ is simple, then $\vartheta(e_\alpha) =  e_{\vartheta\alpha}$. The automorphism $\vartheta$ thus induces an automorphism of the Lie ring $\gfrak(\Z)$.

\begin{example}
Let $\gfrak = \slfrak_3$. The basis of the root system consists of $\alpha$, $\beta$ with corresponding root vectors $E_{12}$, $E_{23}$, and the highest root vector $E_{13} = [E_{12}, E_{23}]$. The automorphism $\vartheta$ of the Dynkin diagram swaps the two basis elements. We have $\vartheta(E_{13}) = \vartheta([E_{12}, E_{23}]) = - E_{13}$. On the other hand, $\vartheta$ fixes the Cartan subalgebra setwise.
\end{example}

Now let $K$ be any field with an extension $E$ of degree $\abs{\vartheta}$, the order of $\vartheta$ (note that $\abs{\vartheta} = 1, 2, 3$). Let $\sigma$ be an element of $\Gal(E/K)$ of order $\abs{\vartheta}$. The composition $\Theta = \vartheta \sigma$ is a semilinear automorphism of the Lie algebra $\gfrak(E)$. The set of its fixed points $\gfrak(E)^\Theta$ is a Lie algebra over $K$ whose extension of scalars to $E$ is isomorphic to $\gfrak(E)$. By taking $K = \F_p$ for $p > 3$, the Lie algebras constructed in this way exhaust, after quotienting by the center, all isomorphism classes of non-split $\F_p$-forms (and, more generally, all non-split forms over any finite field). See, for instance, \cite[Chapter IV.6]{seligman2012modular}.

\subsubsection{Characteristic $0$ coverings}

Let $\gfrak(\F_{p^d})^\Theta$ be an $\F_p$-form of a classical Lie algebra, where $d = \abs{\vartheta}$. By construction, this Lie algebra has an associated characteristic $0$ Lie algebra $\gfrak(E)^\Theta$, where $E$ is a degree $d$ extension of $\Q$. We call this a \emph{$\Q$-covering Lie algebra} of $\gfrak(\F_{p^d})^\Theta$. It is itself a $\Q$-form of the Lie algebra $\gfrak(\C)$, and it contains the Lie ring $\gfrak(\integers_E)^\Theta$, referred to as the corresponding \emph{covering ring}, where $\integers_E$ is the ring of integers of the number field $E$. We have a natural surjection 
\[
\pi \colon \gfrak(\integers_E) \to \gfrak(\F_{p^d})
\]
by reducing to $\integers_E/p \cong \F_{p^d}$ as long as $p$ is inert in $E$. The map $\pi$ is equivariant with respect to the action of $\Theta$, so it induces a natural map $\pi^\Theta$ on the level of fixed points. 

\begin{lemma} \label[lemma]{lemma:covering_surjection}
Assume $p > d$. Then $\pi^\Theta \colon \gfrak(\integers_E)^\Theta \to \gfrak(\F_{p^d})^\Theta$ is a surjection.
\end{lemma}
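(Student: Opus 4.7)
The plan is a norm-averaging argument over the cyclic group $\langle \Theta \rangle$ of order $d$, made possible by the hypothesis $p > d$, which ensures that $d$ is invertible in $\F_p$. First, given $y \in \gfrak(\F_{p^d})^\Theta$, I would lift it to some $x \in \gfrak(\integers_E)$. This is automatic: the reduction $\pi$ on the lattice $\gfrak(\integers_E) = \gfrak(\Z) \otimes_\Z \integers_E$ is just componentwise reduction along $\integers_E \twoheadrightarrow \integers_E/p \cong \F_{p^d}$, hence surjective.

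Next, I would symmetrise by forming the norm element
\[
    N(x) = \sum_{i=0}^{d-1} \Theta^i(x) \in \gfrak(\integers_E).
\]
Since $\vartheta$ and $\sigma$ act on different tensor factors they commute, and each has order $d$, so $\Theta^d = 1$ and $N(x)$ is $\Theta$-invariant. Applying the $\Theta$-equivariant map $\pi$ and using $\Theta(y) = y$ yields $\pi(N(x)) = d\,y$. Now pick an integer $d'$ with $dd' \equiv 1 \pmod{p}$, available because $p > d$. Then $d'\,N(x)$ still lies in $\gfrak(\integers_E)^\Theta$, since scaling by an integer preserves both the lattice and $\Theta$-invariance, and $\pi(d'\,N(x)) = d'd\,y = y$, as required.

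The one subtle point, which I take to be the conceptual heart of the argument, is the apparent need to divide by $d$ in order to average. This is not possible inside $\integers_E$ in general, but it is unnecessary here: the inverse of $d$ is only required in the image $\F_p$, so multiplication by the integer lift $d'$ of $d^{-1} \bmod p$ keeps us inside the lattice while reducing to $y$. Without the assumption $p > d$ the norm map $N$ would no longer have a well-behaved partial inverse, and one would instead be forced into a Galois cohomology computation of $H^1(\langle \Theta \rangle, \gfrak(\integers_E))$, which could in principle be nonzero.
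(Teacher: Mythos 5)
Your argument is correct and is essentially the same as the paper's: both lift an arbitrary preimage, average (take the norm) over the $\langle\Theta\rangle$-orbit, observe that the result reduces to $d$ times the target, and use $p>d$ to invert $d$. The only difference is cosmetic --- you make the final scaling by an integer representative of $d^{-1}\bmod p$ explicit, whereas the paper leaves that step implicit.
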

\begin{proof}
Let $x \in \gfrak(\F_{p^d})^\Theta$. Let $\tilde x$ be a lift of $x$ to $\gfrak(\Z) \otimes \integers_E$. This element might not be fixed under $\Theta$, but we can average it over the orbit of $\Theta$ to obtain the element $y = \sum_{i = 1}^d \Theta^i(\tilde x)$. The element $y$ is fixed under $\Theta$, and it reduces to $dx \in \gfrak(\F_{p^d})$ modulo $p$. Since $p > d$, we thus obtain $x \in \image(\pi^\Theta)$.
\end{proof}

The averaging process over the $\langle \Theta \rangle$-orbits described in the proof above is a key step in constructing a basis for the $\Q$-covering Lie algebra $\gfrak(E)^\Theta$ that is analogous to the Chevalley basis in the split case. For each vector $x \in \gfrak(E)$, let $x^{\langle \Theta \rangle} = \sum_{t \in \langle \Theta \rangle} x^t$. 

\begin{proposition}
\label[proposition]{prop:steinberg_basis}
Let $E$ be a number field with basis $\mathcal{E}\subseteq \integers_E$ over $\Q$.
Let $\basis$ be the Chevalley basis of $\gfrak(E)$. Then 
\[
    \steinbergbasis = \left\{ (e b) ^{\langle \Theta \rangle} \mid b \in \basis, \ e \in \mathcal{E} \right\} \setminus \{ 0 \} \subseteq \gfrak(\integers_E)^\Theta
\]
is a spanning set of $\gfrak(E)^\Theta$ over $\Q$.
\end{proposition}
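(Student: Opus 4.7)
The strategy is to exhibit $\steinbergbasis$ as the image of a $\Q$-basis of $\gfrak(E)$ under a surjective $\Q$-linear projection onto the fixed subspace $\gfrak(E)^\Theta$; surjectivity of the projection combined with the basis property immediately yields a spanning set.

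First, I would verify that the averaging operator
\[
    \pi_\Theta \colon \gfrak(E) \to \gfrak(E)^\Theta, \quad x \mapsto x^{\langle \Theta \rangle}
\]
is a well-defined, $\Q$-linear, surjective map. The map is $\Q$-linear because $\Theta$ is: indeed $\vartheta$ preserves $\gfrak(\Z)$ and hence acts $\Q$-linearly (though not $E$-linearly) on $\gfrak(E)$, while $\sigma \in \Gal(E/\Q)$ is $\Q$-linear by definition. The image lies in $\gfrak(E)^\Theta$ because $\Theta$ cyclically permutes the summands of the orbit sum. Conversely, for any $x \in \gfrak(E)^\Theta$ we have $\pi_\Theta(x) = \abs{\langle \Theta \rangle}\, x$, which recovers $x$ after dividing by $\abs{\langle \Theta \rangle} \in \{1,2,3\}$, a unit in characteristic zero. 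Hence $\pi_\Theta$ is surjective onto $\gfrak(E)^\Theta$.

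Second, I would observe that $\{e b \mid b \in \basis,\ e \in \mathcal{E}\}$ is a $\Q$-basis of $\gfrak(E)$. This follows from the tensor decomposition $\gfrak(E) = \gfrak(\Z) \otimes_\Z E$: the Chevalley basis $\basis$ is an $E$-basis of $\gfrak(E)$, and tensoring with the $\Q$-basis $\mathcal{E}$ of $E$ produces a $\Q$-basis of $\gfrak(E)$ of the stated form. Applying the $\Q$-linear surjection $\pi_\Theta$ to this basis gives a $\Q$-spanning set of $\gfrak(E)^\Theta$, and discarding the zero vectors yields precisely $\steinbergbasis$.

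It remains to verify the containment $\steinbergbasis \subseteq \gfrak(\integers_E)^\Theta$, which requires choosing $\mathcal{E} \subseteq \integers_E$ (e.g.\ an integral basis of $\integers_E$, which is automatically a $\Q$-basis of $E$). Then $e b \in \gfrak(\integers_E)$, and since $\Theta$ stabilises the lattice $\gfrak(\integers_E) = \gfrak(\Z) \otimes_\Z \integers_E$ — because $\vartheta$ preserves $\gfrak(\Z)$ by the formulae $\vartheta(h_\alpha) = h_{\vartheta\alpha}$ and $\vartheta(e_\alpha) = \pm e_{\vartheta\alpha}$, and $\sigma$ preserves $\integers_E$ — the orbit sum $(e b)^{\langle \Theta \rangle}$ lands in $\gfrak(\integers_E)^\Theta$. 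I expect the only mildly subtle point to be this integrality verification; everything else is formal linear algebra over $\Q$, so no genuine obstacle stands in the way.
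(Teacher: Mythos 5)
Your proposal is correct and follows essentially the same approach as the paper: push a $\Q$-basis $\{eb\}$ of $\gfrak(E)$ forward through the $\langle\Theta\rangle$-averaging map and note that the image lands surjectively on the fixed subspace. The only cosmetic difference is that you use the unnormalized orbit sum and argue surjectivity from the fact that it scales fixed points by $\abs{\langle\Theta\rangle}$, whereas the paper divides by $\abs{\langle\Theta\rangle}$ to get an idempotent projection; your additional integrality check, while not in the paper's proof, is a sensible explication of the claimed containment $\steinbergbasis \subseteq \gfrak(\integers_E)^\Theta$.
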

\begin{proof}
    Let $A$ be the endomorphism of $\gfrak(E)$ over $\Q$ mapping $x$ to its $\langle\Theta\rangle$-average $x^{\langle \Theta \rangle}/\abs{\langle \Theta \rangle}$. Note that $A$ maps to $\gfrak(E)^\Theta$ and preserves it, hence it is a projection onto $\gfrak(E)^\Theta$. Therefore a spanning set for $\gfrak(E)^\Theta$ can be obtained by mapping the basis $\{ eb \mid b \in \basis, e \in \mathcal{E} \}$ of $\gfrak(E)$ under $A$.
\end{proof}

\begin{example}
Let $\gfrak = \slfrak_3$ and $E = \Q(\omega)$ of degree $2$ with primitive element $\omega$ and equipped with an automorphism $\sigma$ of order $2$. A Chevalley basis for $\gfrak(E)$ is given by
\[
    \basis = \{ E_{12}, E_{23}, E_{13} \} \cup \{ E_{21}, E_{32}, E_{31} \} \cup \{ E_{11} - E_{22}, E_{22} - E_{33} \}.
\]
For each vector $x \in \slfrak_3(E)$, we compute its $\langle \Theta \rangle$-orbit as $x^{\langle \Theta \rangle} = x + x^\Theta$. For example, $(\omega E_{12})^{\langle \Theta \rangle} = \omega E_{12} + \omega^\sigma E_{23}$. Performing this on all the Chevalley basis elements and their $\omega$-multiples, we obtain
\[
    \steinbergbasis = \bigcup_{x \in E_{12}, E_{23}} \{ x^{\langle \Theta \rangle}, (\omega x)^{\langle \Theta \rangle} \} \cup 
    \{ (\omega E_{13})^{\langle \Theta \rangle}, (\omega E_{31})^{\langle \Theta \rangle} \} \cup
    \{ E_{11} - E_{22}, E_{22} - E_{33} \},
\]
a basis for the $\Q$-Lie algebra $\gfrak(E)^\Theta$. Its elements all belong to $\gfrak(\integers_E)^\Theta$.
\end{example}

We will use the previous proposition in the following way. Let $\steinbergbasis$ be a $\Z$-basis of the free abelian group $\gfrak(\integers_E)^\Theta$. Assume $p > d$ is inert in $E$. The map $\pi^\Theta$ is surjective, so $\pi(\steinbergbasis)$ spans $\gfrak(\F_{p^d})^\Theta$ over $\F_p$. Moreover, 
\[
|\steinbergbasis|
=
\dim_{\Q} \gfrak(E)^\Theta
=
\dim_{\F_p} \gfrak(\F_{p^d})^\Theta,
\]
since these are forms of the same classical Lie algebra. Thus $\pi(\steinbergbasis)$ is a spanning set of cardinality equal to the dimension, and hence a basis. 

\subsubsection{A favorable covering pair of elements}

In order to bound diameters of the $\F_p$-forms with respect to a random pairs of elements, we use a pair of elements in the covering Lie ring with favorable properties. The first such property is rapid growth. Say a Lie ring $\gfrak$ exhibits \emph{exponential growth} with respect to a finite (not necessarily generating) subset $S$ if there exists a constant $\gamma > 1$ such that for any $m \in \N$, we have $\abs{S^m} \geq \gamma^m$.

\begin{example}
The Lie ring $\slfrak_2(\Z)$ exhibits exponential growth with respect to the standard generating set $\{ e, f, h\}$. To see this, let $\phi \colon \slfrak_2(\Z) \to \Z$ be the map $\phi(X) = X_{12}$. We claim that for any $m \in \N$ and $x \in \phi(S^m)$, we have $2x, 2x + 1 \in \phi(S^{m + 2})$. Indeed, for $X \in S^m$ with $\phi(X) = x$, we have $2x = \phi([h, X]) \in \phi(S^{m + 1}) \subseteq \phi(S^{m + 2})$ and $2x + 1 = \phi([h,X] + e) \in \phi(S^{m + 2})$. Starting with $0, 1 \in \phi(S^1)$, we can generate all integers from $0$ to $2^m - 1$ in $\phi(S^{2m-1})$ with the above procedure. Thus, we have $\abs{S^{m}} \geq 2^{m/2}$ for any $m \in \N$.
\end{example}
An \emph{$\slfrak_2$-triple} in a Lie algebra $\gfrak$ over $K$ is a triple of elements $\{e, h, f\}$ satisfying $[h,e] = 2e$, $[h,f] = -2f$, and $[e,f] = h$. In other words, this is an embedded copy of the Lie algebra $\slfrak_2(K)$ inside $\gfrak$. If $K = \Q$, then the existence of an $\slfrak_2$-triple in $\gfrak$ implies that $\gfrak$ exhibits exponential growth with respect to the set $\{e, h, f\}$, and therefore with respect to \emph{any} generating set as shown in the following lemma.

\begin{lemma}
\label[lemma]{lemma:sl_2_triple_exp_growth}
If a Lie algebra $\gfrak$ over $\Q$ contains an $\slfrak_2$-triple, then it exhibits exponential growth with respect to any generating set.
\end{lemma}
\begin{proof}
Let $S$ be a generating set of $\gfrak$ and let $\{e, h, f\}$ be an $\slfrak_2$-triple in $\gfrak$. Let us first show that there are positive integers $a, b, m \in \N$ such that $ae, bh \in S^m$. Since $S$ generates $\gfrak$ as a Lie algebra over $\Q$, we may write (after clearing denominators)
\[
a_0 e = \sum_i n_i p_i,
\qquad
c_0 f = \sum_j m_j q_j,
\]
where $a_0, c_0 \in \N$, $n_i, m_j \in \Z$, and $p_i, q_j$ are Lie monomials in $S$. Bracketing the two expressions gives
\[
a_0 c_0 h 
= \sum_{i, j} n_i m_j [p_i, q_j].
\]
In case a coefficient $n_i m_j$ is negative, we can replace the corresponding Lie monomial $[p_i, q_j]$ by $-[p_i, q_j] = [q_j, p_i]$. Taking $b = a_0 c_0$, we thus obtain $bh \in S^{m'}$ for some $m' \in \N$. Now
\[
2 a_0 b e
= [bh, a_0 e]
= \sum_{i, j} n_i [bh, p_i],
\]
so again after possibly reversing the order of some Lie monomials, we obtain $a e \in S^{m}$ for some $a \in \N$ and $m \in \N$ with $m \geq m'$.

Let us now show how this implies exponential growth with respect to $S$. For any $k \in \N_0$, let $X_k = \{ t \in \Z \mid te \in S^k \}$. For $t \in X_k$, we have $2bt e = [bh, te] \in S^{k + m}$, so $2bt \in X_{k + m}$. Hence $2b X_k \subseteq X_{k + m}$, and iterating this $l$ times gives $(2b)^l X_k \subseteq X_{k + lm}$. Choose $l$ such that $(2b)^l > a$. Then the sets $(2b)^l X_k$ and $a + (2b)^l X_k$ are disjoint subsets of $X_{k + (l + 1)m}$. Therefore $\abs{X_{k + (l + 1)m}} \geq 2\abs{X_k}$ for every $k$. Starting with $\abs{X_0} = 1$, we obtain $\abs{X_{n(l + 1)m}} \geq 2^n$ for any $n \in \N$, and thus $\abs{S^{n(l + 1)m}} \geq 2^n$ for any $n \in \N$.
\end{proof}

We now show that our $\Q$-coverings always contain an $\slfrak_2$-triple.

\begin{proposition}
\label[proposition]{sl_2_triple_in_covering}
Every $\Q$-covering of an $\F_p$-form of a classical Lie algebra contains an $\slfrak_2$-triple.
\end{proposition}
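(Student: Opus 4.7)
The plan is to construct an explicit $\slfrak_2(\Q)$-triple in $\gfrak(E)^\Theta$ by a short case analysis on the diagram automorphism $\vartheta$. The guiding principle is that a $\vartheta$-fixed simple root gives the usual Chevalley $\slfrak_2$-triple for free, whereas the absence of such a fixed root forces a slight twist by an element of $E$.

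In the first case, when $\vartheta$ fixes some simple root $\alpha$, the convention recalled above gives $\vartheta(e_\alpha) = e_\alpha$ and $\vartheta(h_\alpha) = h_\alpha$. Because $\vartheta$ maps $\slfrak_2$-triples to $\slfrak_2$-triples and already fixes $(e_\alpha, h_\alpha)$, the relation $[e_\alpha, e_{-\alpha}] = h_\alpha$ forces $\vartheta(e_{-\alpha}) = e_{-\alpha}$ as well. All three elements lie in $\gfrak(\Q) \subseteq \gfrak(E)$, hence are $\sigma$-fixed and therefore $\Theta$-fixed, so $\{e_\alpha, h_\alpha, e_{-\alpha}\}$ is the desired $\slfrak_2(\Q)$-triple. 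Scanning the admissible non-trivial $\vartheta$, this covers the split case and the non-split ones coming from the vertical flip of $D_n$, the triality of $D_4$, the flip of $E_6$, and the flip of $A_n$ when $n$ is odd.

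The only remaining possibility is $A_n$ with $n$ even, where every simple root $\alpha_i$ is exchanged with $\alpha_{n+1-i} \neq \alpha_i$. Here $\abs{\vartheta} = 2$, so $E$ is a quadratic extension of $\Q$ and I can choose $\omega \in E^\times$ with $\sigma(\omega) = -\omega$. The highest root $\alpha_{\max}$ is $\vartheta$-stable, and I would verify $\vartheta(e_{\pm\alpha_{\max}}) = -e_{\pm\alpha_{\max}}$ by expanding $e_{\alpha_{\max}}$ as a nested bracket of simple root vectors, applying $\vartheta$ (which reverses the order of the factors), and using antisymmetry of the bracket; meanwhile $\vartheta(h_{\alpha_{\max}}) = h_{\alpha_{\max}}$ since $\vartheta$ permutes the simple coroots summing to $h_{\alpha_{\max}}$. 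The twisted elements $e = \omega\, e_{\alpha_{\max}}$, $h = h_{\alpha_{\max}}$, $f = \omega^{-1} e_{-\alpha_{\max}}$ are then each $\Theta$-fixed, and the Chevalley relations along the $\pm\alpha_{\max}$-string together with $\omega \cdot \omega^{-1} = 1$ yield the $\slfrak_2(\Q)$-triple relations.

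The main obstacle I expect is the sign bookkeeping in the second case: establishing $\vartheta(e_{\pm \alpha_{\max}}) = -e_{\pm\alpha_{\max}}$ for every even $n$ requires tracing through the Chevalley structure constants, though the cases $n = 2$ and $n = 4$ already exhibit the pattern clearly. Should the explicit sign analysis become cumbersome, a conceptual fallback is to note that $\gfrak(E)^\Theta$ is a $\Q$-form of the simple Lie algebra $\gfrak(E)$, hence simple over $\Q$, and that it contains the nonzero $\Theta$-fixed nilpotent element $e_{\alpha_i} + e_{\vartheta(\alpha_i)}$ for any simple $\alpha_i$ with $\vartheta(\alpha_i) \neq \alpha_i$; the Jacobson--Morozov theorem over $\Q$ would then extend this nilpotent to an $\slfrak_2(\Q)$-triple inside $\gfrak(E)^\Theta$.
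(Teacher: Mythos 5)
Your proof is correct, and it takes a somewhat different route from the paper's. The paper's key observation is that since $\vartheta$ permutes the simple root vectors, $\vartheta(e_\lambda)$ is again a highest weight vector of the adjoint representation, so $\vartheta(e_\lambda) = \pm e_\lambda$ for some sign --- and then both signs are handled abstractly, without ever needing to determine which one occurs. Your proposal instead splits on whether $\vartheta$ fixes a simple root $\alpha$: when it does, your argument that the $\slfrak_2$-relation $[e_\alpha, e_{-\alpha}] = h_\alpha$ forces $\vartheta(e_{-\alpha}) = e_{-\alpha}$ is correct, and this cleanly dispenses with all cases except $A_n$ with $n$ even. For that remaining case, however, you rely on the claim $\vartheta(e_{\pm\alpha_{\max}}) = -e_{\pm\alpha_{\max}}$ for \emph{all} even $n$, which you check only for $n = 2, 4$; while the claim is in fact true (the sign bookkeeping through Steinberg's formulas confirms it), as written it is a loose end that the paper's sign-agnostic argument avoids entirely. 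Note also that if the sign happened to be $+1$ in some even-$n$ case, your twisted triple would \emph{fail} to be $\Theta$-fixed, so the sign is not merely a detail --- though the untwisted triple would then work instead, so the overall strategy is still salvageable. Your twist ($e = \omega e_{\alpha_{\max}}$, $f = \omega^{-1}e_{-\alpha_{\max}}$ with $\sigma(\omega) = -\omega$) is essentially the same as the paper's $(\omega - \omega^\sigma)e_\lambda$ up to scaling, and both are correct; the paper's version is normalized so that the resulting triple lies in $\gfrak(\integers_E[1/\Delta(f)])$, a mild integrality statement used implicitly later, which your scaling by $\omega^{-1}$ does not track. Finally, your Jacobson--Morozov fallback is a legitimate and cleaner escape hatch that would make the sign discussion unnecessary; the paper does not use it, presumably because the explicit triple is wanted for the subsequent quantitative arguments.
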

\begin{proof}
Let $e_\lambda$ be a highest weight vector in $\gfrak(\Z) \subseteq \gfrak(E)$. The defining property of the highest weight vector is that it is a root vector that commutes with all the simple root vectors. Since $\vartheta$ is a Lie ring automorphism of $\gfrak(\Z)$ that permutes the simple roots, it follows that $\vartheta(e_\lambda)$ is also a highest weight vector. Hence $\vartheta(e_\lambda) = \pm e_\lambda$ for some sign. If the sign is positive, then $e_\lambda \in \gfrak(E)^\Theta$, and so $\{ e_\lambda, e_{-\lambda}, h_{\lambda} \}$ forms an $\slfrak_2$-triple in $\gfrak(E)^\Theta$. Note that $\vartheta$ is of order dividing $2$ or $3$, hence the negative sign can only occur if $\vartheta$ is of order $2$. In this case, if $E = \Q[\omega]/(f(\omega))$ with $f$ of degree $2$, we have $(\omega e_\lambda)^{\langle \Theta \rangle} = (\omega - \omega^\sigma) e_\lambda \in \gfrak(E)^\Theta$. Note that
\[
    [(\omega - \omega^\sigma) e_\lambda, (\omega - \omega^\sigma) e_{-\lambda}] = (\omega - \omega^\sigma)^2 h_\lambda,
\]
where $(\omega - \omega^\sigma)^2 = \Tr(\omega)^2 - 4 \fieldnorm{\omega} = \Delta(f) \in \Q$, the discriminant of $f$. Hence $\{ (\omega e_\lambda)^{\langle \Theta \rangle}, (\omega e_{-\lambda})^{\langle \Theta \rangle} / \Delta(f), h_\lambda \}$ forms an $\slfrak_2$-triple in $\gfrak(E)^\Theta$ that belongs to $\gfrak(\integers_E[1/\Delta(f)])$.
\end{proof}

This implies that a $\Q$-covering of an $\F_p$-form of a classical Lie algebra exhibits exponential growth with respect to any generating set $S$. The following shows that we can always exhibit a particularly favorable generating set and even pass it on to the covering ring.

\begin{theorem} \label[theorem]{generating_pair_in_covering_ring}
Let $E$ be a number field of degree $d$ with $d \leq 3$. The ring $\gfrak(\integers_E)^\Theta$ contains a pair of elements $x, y$ such that the following hold.
\begin{enumerate}
    \item The ring $\gfrak(\integers_E)^\Theta$ exhibits exponential growth with respect to $\{x, y\}$.
    \item The image of $\{ x,y \}$ modulo $p$ generates $\gfrak(\F_{p^d})^\Theta$ for all large enough $p$ (depending only on the Lie type $\gfrak$) that are inert in $E$.
\end{enumerate}
\end{theorem}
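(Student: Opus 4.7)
The plan is to construct $x, y \in \gfrak(\integers_E)^\Theta$ so that (i) a bounded sequence of Lie brackets and integer combinations of $\{x,y\}$ recovers a rescaled $\slfrak_2$-triple, which will force exponential growth of $\abs{\{x,y\}^m}$, and (ii) the pair Lie-generates the $\Q$-Lie algebra $\gfrak(E)^\Theta$, which will transfer to mod-$p$ generation for all sufficiently large inert primes.

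For (i), I would start from the $\slfrak_2(\Q)$-triple $\{\tilde e, \tilde h, \tilde f\}$ produced in Proposition~\ref{sl_2_triple_in_covering}, which lies in $\gfrak(\integers_E[1/\Delta])^\Theta$ for some integer $\Delta$. After multiplying each element by a suitable power of $\Delta$, one obtains integral elements $e', h', f' \in \gfrak(\integers_E)^\Theta$ that still satisfy $[h', e'] = c_1 e'$, $[h', f'] = c_2 f'$, $[e', f'] = c_3 h'$ for nonzero integers $c_1, c_2, c_3$, all of absolute value at least $2$. Any pair whose bracket-and-addition closure contains these three elements then enjoys the doubling-and-translation behaviour of the example following Proposition~\ref{sl_2_triple_in_covering}: one obtains in $m$ steps at least $\abs{c_1}^{\Theta(m)}$ distinct integer multiples of $e'$, hence $\abs{\{x,y\}^m} \geq \gamma^m$ for some constant $\gamma > 1$.

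For (ii), I would take $x, y$ to be $\Z$-combinations of the Steinberg basis chosen so that $\{x,y\}$ Lie-generates $\gfrak(E)^\Theta$ over $\Q$ and so that $\{e',h',f'\}$ is recovered from $\{x,y\}$ by a fixed bounded number of brackets and $\Z$-combinations. Such a pair exists by a Zariski density argument: $\gfrak(E)^\Theta$ is a finite-dimensional simple Lie algebra over the infinite field $\Q$, hence $2$-generated, so the locus of Lie-generating pairs that also reproduce $\{e',h',f'\}$ is a nonempty Zariski open subset of the affine $\Q$-space $(\gfrak(E)^\Theta)^2$. Since the lattice $(\gfrak(\integers_E)^\Theta)^2$ is Zariski dense in that affine space, it meets this open set, and we pick $x, y$ there.

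To descend to mod $p$: Lie generation of $\gfrak(E)^\Theta$ by $\{x,y\}$ provides a finite family of bracket and $\Q$-linear expressions in $x, y$ producing the Steinberg basis of $\gfrak(E)^\Theta$, involving only finitely many denominators. For every inert prime $p > d$ coprime to these denominators and to $\Delta$, reduction mod $p$ commutes with the expressions, so $\{\bar x, \bar y\}$ Lie-generates the image of $\gfrak(\integers_E)^\Theta$ inside $\gfrak(\F_{p^d})$; by Lemma~\ref{lemma:covering_surjection} this image equals $\gfrak(\F_{p^d})^\Theta$. The main obstacle I anticipate is arranging the threshold for $p$ to depend only on the Lie type and not on $E$: the Zariski density argument is designed to bypass type-by-type verification (in particular the triality case $D_4$ with $\abs{\vartheta}=3$), but one must still bound the denominators appearing in the bracket expressions uniformly, most likely by writing everything in a fixed integral model of the covering Lie ring and tracking the Chevalley structure constants through the $\langle\Theta\rangle$-averaging of Proposition~\ref{prop:steinberg_basis}.
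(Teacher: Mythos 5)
Your proposal takes essentially the same route as the paper: exponential growth (item 1) comes from the $\slfrak_2(\Q)$-triple of Proposition~\ref{sl_2_triple_in_covering}, rescaled to lie in $\gfrak(\integers_E)^\Theta$, combined with the doubling-and-translation argument given in the example preceding the theorem; generation mod $p$ (item 2) comes from two-generation of $\gfrak(E)^\Theta$ in characteristic $0$ (the paper cites Kuranishi), with the pair scaled to be integral and the Steinberg basis then expressed as bracket words in $x,y$ after clearing denominators. One phrasing slip in your item (ii): the set of pairs that recover $\{e',h',f'\}$ \emph{integrally} within a bounded number of steps is not Zariski open, since integrality is not an open condition; but this does not matter, because the set of Lie-generating pairs \emph{is} open and nonempty, and any generating pair in the lattice expresses the triple (and the Steinberg basis) as $\Q$-linear combinations of bracket words, after which one clears denominators --- exactly as the paper does. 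Your concern at the end about whether the threshold for $p$ can be made to depend only on the Lie type and not on $E$ is a genuine subtlety that the paper's own proof also leaves implicit; in the non-split application (Theorem~\ref{theorem:diameter_bound_non_split}) the covering field varies with $p$, so that uniformity does need attention there.
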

\begin{proof}
Semisimple Lie algebras over fields of characteristic $0$ can be generated by two elements \cite[Theorem 6]{kuranishi1951everywhere}. Hence, any $\Q$-covering $\gfrak(E)^\Theta$ contains a generating pair of elements $x,y$. After possibly multiplying $x,y$ by suitable integers, we may assume that they are integral, thus $x, y \in \gfrak(\integers_E)^\Theta$. In particular, every element of the basis $b \in \steinbergbasis$ can be expressed as a $\Q$-linear combination of elements from Lie balls centered at $x,y$. After clearing denominators, we obtain an integral linear combination of elements from Lie balls centered at $x,y$ that equals an integer multiple of $b$. Therefore the reductions modulo $p$ of $x,y$ generate $\gfrak(\F_{p^d})^\Theta$ as long as $p$ is large enough.
\end{proof}

\begin{remark} 
\label[remark]{remark:witt}
Our proofs below crucially depend on the existence of a covering ring with the properties described in the proposition above. This approach does not extend to Lie algebras of Cartan type, as these do not possess finite-dimensional analogues in characteristic $0$. For instance, unlike the classical Lie algebras, the Witt algebra $W(p)$ has $\dim \gfrak = p$, so there is no meaningful notion of ``bounded rank'' in this context. We do not know whether this algebra exhibits extremely rapid random generation as $p$ tends to infinity.
\end{remark}

\subsubsection{An extremal basis}

A non-zero element $x \in \gfrak$ is called \emph{extremal} if $\image (\ad_x)^2 \subseteq \langle x \rangle$. If, in addition, $(\ad_x)^2 = 0$, then $x$ is called a \emph{sandwich}. For example, in $\slfrak_n(\C)$, any $E_{ij}$ with $i \neq j$ is extremal. These elements play a crucial role in proving the sum-bracket theorem \cite[Section 5.1]{dona-sum-bracket}, and will also be important in our arguments below. For our purposes, we will need the following variant of \cite[Theorem 5.2]{dona-sum-bracket} for non-split forms of classical Lie algebras. For any $a,b \in \gfrak$, let $q_{a,b}$ be the quadratic map
\[
    q_{a,b} \colon \gfrak \to \gfrak, \quad z \mapsto [\ad_a(z), \ad_b(z)].
\]

\begin{proposition}
\label[proposition]{prop:extremal_basis}
Let $p > 5$, and let $\gfrak$ be an $\F_p$-form of a classical Lie algebra.
\begin{enumerate}
    \item There exists a basis $\mathcal{E}$ of $\gfrak$ consisting of extremal non-sandwich elements.
    \item There exists an element $a \in \mathcal{E}$ such that $q_{a,b} \neq 0$ for every $b \in \mathcal{E} \setminus \{ a \}$.
\end{enumerate}
\end{proposition}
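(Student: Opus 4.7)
\emph{Strategy.} My plan is to handle Part~1 for split and non-split forms separately, then derive Part~2 by picking a well-chosen $a$. The split case of Part~1 is \cite[Theorem~5.2]{dona-sum-bracket} and needs no new work. For non-split forms $\gfrak = \gfrak(\F_{p^d})^\Theta$, I would lift the problem to the characteristic-zero $\Q$-covering $\gfrak(\integers_E)^\Theta$ of \Cref{sl_2_triple_in_covering}, produce a basis of extremal non-sandwich elements there using classical semisimple Lie theory, and reduce modulo $p$ via the surjection $\pi^\Theta$ of \Cref{lemma:covering_surjection}. For Part~2, I would pick $a$ to be (the reduction of) a long root vector and verify via a polarization identity that the ``bad subspace'' $W_a = \{b \in \gfrak : q_{a,b} \equiv 0\}$ avoids all other basis elements in $\mathcal{E}$.

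\emph{Details.} For Part~1 in the non-split case: in $\gfrak(E)^\Theta$ (base-changing to $\gfrak(\C)$), classical nilpotent-orbit theory says the adjoint orbit $\mathcal{O}$ of a long root vector consists entirely of non-sandwich extremal elements and $\Q$-spans the Lie algebra. I would extract a $\Q$-basis from $\mathcal{O}$, rescale to land in $\gfrak(\integers_E[1/N])^\Theta$ for an integer $N$ depending only on the Lie type, and reduce modulo $p$ through $\pi^\Theta$; since being extremal and being non-sandwich are polynomial conditions in the structure constants, both properties pass to the reduction, and the outcome is a basis of $\gfrak(\F_{p^d})^\Theta$ of extremal non-sandwich elements for every $p > 5$ coprime to $N$ and to the basis-transition determinant. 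For Part~2: polarizing the quadratic identity $q_{a,b}(z) = 0$ (valid since $p > 2$) and setting $z_1 = a$, using $\ad_a(a) = 0$, yields $[\,\ad_a(z), [a, b]\,] = 0$ for all $z$, so $[a, b] \in C_\gfrak(\image \ad_a)$, and hence $W_a \subseteq \ad_a^{-1}\bigl(C_\gfrak(\image \ad_a)\bigr)$. With $a = e_\alpha$ a long root vector, the right-hand side can be computed using the root-space decomposition of $\gfrak$ and shown to equal $\langle a \rangle$ (the image $\image \ad_{e_\alpha}$ is rich enough — containing $h_\alpha$ and root vectors $e_{\alpha+\beta}$ — that its centralizer is small). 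I would then ensure the basis $\mathcal{E}$ from Part~1 contains $a$, swapping in $a$ for one of the original basis elements if needed; since $W_a = \langle a \rangle$, Part~2 follows automatically.

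\emph{Main obstacle.} The most delicate step is the non-split case of Part~1, where I need the characteristic-zero construction to descend to an actual $\F_p$-basis for \emph{every} $p > 5$ rather than only for $p$ sufficiently large. Concretely, I must check that all ``bad primes'' introduced by denominators and by the basis-transition determinant are contained in $\{2, 3, 5\}$, which calls for a type-by-type analysis over the four non-split classical families ${}^2\!A_n$, ${}^2\!D_n$, ${}^3\!D_4$, ${}^2\!E_6$. For any primes where the generic descent fails, I would fall back on a direct $\F_p$-construction built from $\Theta$-averages of Steinberg basis elements (\Cref{prop:steinberg_basis}) at roots fixed by $\vartheta$, supplemented by explicit extremal elements that break the Cartan directions (in analogy with the rank-one nilpotent $(e_i + e_j)(e_i - e_j)^T$ trick in $\slfrak_n$). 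A secondary but lighter difficulty is the explicit computation of $C_\gfrak(\image \ad_{e_\alpha})$ giving $W_a = \langle a \rangle$, which should reduce to bookkeeping with the root datum once the non-split twist is tracked through the Steinberg basis.
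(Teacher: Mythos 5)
Your approach to Part~1 is genuinely different from the paper's: you propose to lift to the characteristic-zero covering and reduce modulo $p$, whereas the paper works directly over $\F_{p^d}$ by invoking the Cohen--Ivanyos theory of extremal elements (starting from the $\slfrak_2$-triple built on the highest weight vector, then applying \cite[Theorem 1.1]{cohen2008simple} to get generation by extremal elements, and \cite[Lemma 2.4, Corollary 4.5]{cohen2001lie} to get a spanning set that is sandwich-free). Your route is conceptually clean but, as you correctly flag, produces a basis only for $p$ outside a finite set of bad primes that you cannot control a priori; your proposed fallback for the bad primes is a sketch, not an argument, and this is where your Part~1 remains incomplete relative to the stated $p>5$ bound.

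There is a more serious gap in Part~2. Your polarization step is correct and gives $W_a \subseteq \ad_a^{-1}\bigl(C_\gfrak(\image\ad_a)\bigr)$, but the conclusion you draw from it, namely $W_a = \langle a\rangle$, is false in general. For the non-split $A_n$ form, with $a$ the lowest weight vector $y$, there exist extremal elements $b$ not proportional to $a$ for which $q_{a,b} \equiv 0$; the paper exhibits an explicit one, $b = \eta(U_1(i), U_1(n+1-i))$ for even $i$, so $W_a \supsetneq \langle a\rangle$. Consequently the proposition cannot be proved by showing the bad subspace is one-dimensional; one must instead choose the basis $\mathcal{E}$ carefully to avoid the (strictly larger) bad subspace, which is precisely why the paper resorts to a type-by-type construction verified symbolically. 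Your remark that the centralizer computation ``should reduce to bookkeeping with the root datum'' substantially underestimates the difficulty, and in the non-split case the $\Theta$-twist makes the relevant ``root'' combinatorics considerably more involved than in the split setting.
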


\begin{proof}
As in the proof of \Cref{sl_2_triple_in_covering}, we can find an $\slfrak_2$-triple in the ambient Lie algebra $\gfrak(\F_{p^d})$ constructed from a scalar multiple of the highest weight vector $e_\lambda$. Note that $\ad_{e_\lambda}^2$ maps all root subspaces either to $0$ or to the highest weight subspace, so any scalar multiple of $e_\lambda$ is extremal. Since it is a part of an $\slfrak_2$-triple, it is not a sandwich. It now follows from \cite[Theorem 1.1]{cohen2008simple} that $\gfrak(\F_{p^d})^\Theta$ is generated by extremal elements. Therefore, it is also spanned by extremal elements and contains no sandwiches, by \cite[Lemma 2.4, Corollary 4.5]{cohen2001lie}. This proves the first part. For the second part, we can assume the spanning set $\mathcal{E}$ contains the suitable scalar multiples of the highest and lowest weight vectors. Let $a \in \mathcal{E}$ be the corresponding multiple of the lowest weight vector $e_{-\lambda}$. To conclude, we do explicit calculations, which are both extensive and Lie type specific. Hence, we defer the argument to Appendix~\ref{sec:extremal_bases}.
\end{proof}

The above observation is the only step in the proof of the sum-bracket theorem \cite[Theorem 1.1]{dona-sum-bracket} that depends on the explicit structure of the Lie algebras, namely the split $\F_p$-forms. By using the same argument together with \Cref{prop:extremal_basis}, the sum-bracket theorem therefore extends to non-split $\F_p$-forms of classical Lie algebras as well. In the next section, we will use the following version.

\begin{corollary}[\emph{cf.} Theorem 1.1 in \cite{dona-sum-bracket}]
\label[corollary]{corollary:sum_bracket}
There is an absolute constant $c > 0$ such that the following holds. 
Let $\gfrak$ be an $\F_p$-form of a classical Lie algebra and $A$ any generating set of $\gfrak$. Then
\[
\abs{A^k} \geq \min \{ \abs{A}^{1+c}, \abs{\gfrak} \},
\]
where $k$ is a constant depending only on the Lie type of $\gfrak$.
\end{corollary}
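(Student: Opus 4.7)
My plan is to reduce directly to the sum-bracket theorem of Dona \cite[Theorem 1.1]{dona-sum-bracket}, which already establishes the desired inequality for split $\F_p$-forms of classical Lie algebras. For the split case, nothing needs to be done. For non-split forms, I would trace through Dona's proof and isolate the single step where the split structure is actually exploited. As the paragraph preceding the corollary records, that step consists exclusively in exhibiting a spanning set of $\gfrak$ consisting of extremal non-sandwich elements, together with a distinguished element $a$ for which the quadratic map $q_{a,b}$ is nonzero on every other basis element.

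Every other ingredient in Dona's argument is formal: the translation of set-theoretic growth in $\gfrak$ into arithmetic growth via the Schwartz-Zippel lemma, the control of images under the quadratic maps $q_{a,b}$, and the iterative application of the sum-product theorem in $\F_p$. None of these manipulations refer to the underlying root system; they treat $\gfrak$ abstractly as a Lie algebra equipped with an extremal basis of the above kind. Consequently, \Cref{prop:extremal_basis}, which provides precisely this data for arbitrary $\F_p$-forms (split and non-split alike) as soon as $p > 5$, serves as a drop-in replacement for Dona's split-case construction, and the remainder of the proof proceeds verbatim to yield the exponent $c$ (absolute) and the constant $k$ (depending only on the Lie type through the size of the extremal basis and the number of iterations required).

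For the finitely many small primes $p \leq 5$ not covered by \Cref{prop:extremal_basis}, the resulting Lie algebras $\gfrak$ have size bounded in terms of the Lie type alone, so the statement holds trivially after enlarging $k$. The main obstacle I expect is the careful bookkeeping required to verify that no subtle hidden dependence on the split structure slips into Dona's argument beyond the extremal basis step; once that inspection is performed, which the preceding discussion in the paper already records as successful, the proof is complete.
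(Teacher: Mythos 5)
Your proof is correct and follows the same route the paper takes: \Cref{prop:extremal_basis} is identified as the unique step in Dona's argument that depends on the split structure, and the remainder of the proof transfers verbatim once that proposition is substituted in. The paper leaves the small-prime case $p \leq 5$ (where \Cref{prop:extremal_basis} does not apply) implicit, whereas you make the easy bounded-size observation explicit --- a minor but welcome tightening.
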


\section{Diameters of classical finite Lie algebras}
\label{sec:diameters_classical}

We are now ready to prove that all the $\F_p$-forms of classical Lie algebras exhibit extremely rapid random generation as $p$ tends to infinity. Here is a brief outline of the argument. 

Let us first consider the split forms, since the technicalities of the non-split case can obscure the main ideas. For split forms, we do not need to worry about automorphisms of the Dynkin diagram. The covering map from \Cref{lemma:covering_surjection} is simply
\[
    \gfrak(\Z) \to \gfrak(\F_p).
\]
Our argument heavily utilizes this map in order to obtain uniform results over all large enough primes. We first show that the coefficients of the balls in $\gfrak(\Z)$ with respect to any pair of elements can grow at most exponentially in the radius of the ball. This means we can do up to about $\log p$ steps and still ensure that the coefficients will not loop around the field $\F_p$, so the ball of radius about $\log p$ will be mapped injectively by the quotient projection to $\gfrak(\F_p)$. Since balls with respect to a favorable pair of elements grow exponentially in the radius (as per the previous section), we can generate, after about $\log p$ steps, at least $p^\delta$ elements in $\gfrak(\F_p)$ for some $\delta > 0$. It follows from the Schwartz-Zippel lemma that the same conclusion then holds for generic pairs of elements. We finish off by using Dona's theorem to show that we generate the whole Lie algebra $\gfrak(\F_p)$ in $O(\log p)$ steps.

A similar, but more involved argument works for the non-split forms. The main difference is that the covering map is now $\gfrak(\integers_E)^\Theta \to \gfrak(\F_{p^d})^\Theta$. As $p$ varies, the characteristic $0$ object depends on the realization of $\F_{p^d}$, and we cannot cover all the cases with a single lattice in $\gfrak(\C)$. 

For this reason, we exhibit several covering objects, each of which covers a large proportion of the primes $p$, and collect them all together using tools from algebraic number theory to cover a set of primes of density arbitrary close to $1$.

\begin{remark}

Our proof can be compared to the argument for the group $\SL_2(\F_p)$ in \cite[Lemma 6.4 and Corollary 6.5]{helfgott2008growth}, but there is an important distinction. In the group case, most pairs of elements do not satisfy any identity of length $\leq c \log p$ (essentially because $\SL_2(\Z)$ contains a free subgroup). However, classical Lie algebras can satisfy polynomial identities whose degree depends only on the Lie type. For example, the Lie algebra $\slfrak_2(\F_p)$ satisfies the following polynomial identity (see \cite[Theorem 4]{drensky2021weak}):
\[
    [[[x_2, x_3], [x_4, x_1]], x_1] + [[[x_2, x_1], [x_3, x_1]], x_4] = 0.
\]
Replacing each $x_i$ with $[X, Y, \dots, Y]$ (with $i-1$ occurrences of $Y$) yields a nontrivial polynomial identity in two variables $X, Y$ on $\slfrak_2(\F_p)$ of degree $11$. (For instance, it is not an identity in $\slfrak_3(\F_p)$.) This shows that not all elements in $\slfrak_2(\Z)$ of length $\leq c \log p$ with respect to any generating set are distinct. Nevertheless, we prove that by projecting small balls from $\slfrak_2(\Z)$, we obtain exponentially many distinct elements.

\end{remark}

\subsection{Exponential growth and its projection}

\subsubsection{Growth of coefficients is at most exponential}

Let $\steinbergbasis$ be a basis of the free abelian group $\gfrak(\integers_E)^\Theta$.\footnote{Note that $\steinbergbasis$ is not necessarily the same as the spanning set constructed in the previous section. The latter generates an abelian subgroup of finite index in $\gfrak(\integers_E)^\Theta$.} For an element $x \in \gfrak(\integers_E)^\Theta$, expressed as $x = \sum_{b \in \steinbergbasis} x_b b$ with $x_b \in \Z$, let
\[
    \norm{x} = \max \{ \abs{x_b} \mid b \in \steinbergbasis \}.
    \]
For a finite subset $S$ of $\gfrak(\integers_E)^\Theta$, write $\norm{S} = \max \{ \norm{x} \mid x \in S \}$. In particular, let
\[
N = \norm{[\steinbergbasis, \steinbergbasis]} = \max \left\{ \norm{[b, b']} \mid b, b' \in \steinbergbasis \right\}.
\]
We have $N \geq 1$, and using it we can bound the growth of the coefficients of the covering ring with respect to the basis $\steinbergbasis$ as follows.

\begin{lemma}
    \label[lemma]{lemma:entry_bound}
    For any $S \subseteq \gfrak(\integers_E)^\Theta$ and any $m \in \N$, we have
    \[
        \norm{S^m} \leq (\abs{\steinbergbasis}^2 N)^{m - 1}\norm{S}^m.
    \]
\end{lemma}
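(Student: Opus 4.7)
The plan is to proceed by strong induction on $m$. The base case $m = 1$ is immediate: $S^1 = \{0\} \cup S$, so $\norm{S^1} = \norm{S}$, which matches the claimed bound $(\abs{\steinbergbasis}^2 N)^{0}\norm{S}^{1} = \norm{S}$. I may assume $\norm{S} \geq 1$ throughout, since the degenerate case $S \subseteq \{0\}$ forces $S^m = \{0\}$ and both sides vanish.

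For the inductive step, by the definition of $S^m$ any element $x \in S^m$ has the form $x = y + z$ or $x = [y,z]$ with $y \in S^j$, $z \in S^{m-j}$, for some $1 \leq j \leq m - 1$. The bracket case is the essential one. Expanding $y = \sum_{b \in \steinbergbasis} y_b b$ and $z = \sum_{b' \in \steinbergbasis} z_{b'} b'$ in the basis $\steinbergbasis$, we have
\[
    [y, z] = \sum_{b, b' \in \steinbergbasis} y_b z_{b'} [b, b'],
\]
so the coefficient of any basis element in the expansion of $[y,z]$ is bounded in absolute value by $\abs{\steinbergbasis}^2 N \norm{y} \norm{z}$. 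Applying the inductive hypothesis to both $\norm{y}$ and $\norm{z}$ yields
\[
    \norm{[y,z]} \leq \abs{\steinbergbasis}^2 N \cdot (\abs{\steinbergbasis}^2 N)^{j-1}\norm{S}^{j} \cdot (\abs{\steinbergbasis}^2 N)^{m-j-1}\norm{S}^{m-j} = (\abs{\steinbergbasis}^2 N)^{m-1}\norm{S}^m,
\]
which is exactly the desired bound.

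For the sum case, the coordinate-wise triangle inequality together with the inductive hypothesis gives $\norm{y+z} \leq (\abs{\steinbergbasis}^2 N)^{j-1}\norm{S}^j + (\abs{\steinbergbasis}^2 N)^{m-j-1}\norm{S}^{m-j}$. Since $\abs{\steinbergbasis}^2 N \geq 1$ and $\norm{S} \geq 1$, each summand is bounded by $(\abs{\steinbergbasis}^2 N)^{m-2}\norm{S}^{m-1}$, and hence their sum is at most $2 (\abs{\steinbergbasis}^2 N)^{m-2}\norm{S}^{m-1}$. To absorb the extra factor of $2$ into $(\abs{\steinbergbasis}^2 N)^{m-1}\norm{S}^m$ I use $\abs{\steinbergbasis}^2 N \norm{S} \geq 2$, which holds since $\abs{\steinbergbasis} = \dim \gfrak \geq 3$ (the smallest classical Lie algebra is $\slfrak_2$), $N \geq 1$, and $\norm{S} \geq 1$.

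The only (minor) obstacle is this last piece of bookkeeping in the sum case; the bracket case drives the estimate and dictates the form of the constant $\abs{\steinbergbasis}^2 N$. The rest is a routine induction.
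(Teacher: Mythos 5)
Your proof is correct and follows essentially the same induction as the paper. The one small difference is that you fill in the bookkeeping for the sum case, which the paper dismisses as ``clearly at most the claimed upper bound'': your observation that the factor of $2$ from the triangle inequality needs $\abs{\steinbergbasis}^2 N \norm{S} \geq 2$ to be absorbed (guaranteed since $\abs{\steinbergbasis} = \dim\gfrak \geq 3$, $N \geq 1$, and $\norm{S}\geq 1$ in the non-degenerate case) is a legitimate detail worth making explicit.
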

\begin{proof}
    Induction on $m$. The base case $m = 1$ is trivial. Assume now that $m > 1$. For any nonzero $x \in S^m$, there are $y \in S^i$ and $z \in S^j$ such that $i + j = m$ and either $x = y + z$ or $x = [y, z]$. In the first case, we have $\norm{x} \leq \norm{y} + \norm{z}$, which is, by induction, clearly at most the claimed upper bound. In the second case, we have, by induction,
    \[
        \norm{x} 
        = \max_{b \in \steinbergbasis} \abs{[y,z]_b} 
        \leq \abs{\steinbergbasis}^2 N \norm{y} \norm{z}
        = (\abs{\steinbergbasis}^2 N)^{m - 1}\norm{S}^m.
    \]
    This completes the proof.
\end{proof}

\subsubsection{Initial growth of projected elements}

We will show that the projection of the favorable pair of elements in the covering ring exhibits initial growth in $\gfrak(\F_{p^d})^\Theta$, meaning that it covers a large proportion of the Lie algebra after a small number of steps.

\begin{lemma}
    \label[lemma]{lemma:initial_growth}
    Let $E$ be a number field of degree $d$ with $d \leq 3$. Let $x,y \in \gfrak(\integers_E)^\Theta$ be the elements provided by Proposition \ref{generating_pair_in_covering_ring}. Let $S = \{x, y\}$. 
    There are constants $c, \delta > 0$ such that 
    \[
        \abs{\pi(S)^{\floor{c\log p}}} \geq p^\delta
    \]
    for all large enough primes $p$ that are inert in $E$.
\end{lemma}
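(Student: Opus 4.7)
The plan is to exploit the exponential growth in the covering ring guaranteed by Theorem \ref{generating_pair_in_covering_ring} together with the at most exponential coefficient growth of Lemma \ref{lemma:entry_bound}, forcing the reduction $\pi^\Theta$ to be injective on a Lie ball of radius $\Theta(\log p)$ inside $\gfrak(\integers_E)^\Theta$.

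First I would fix a $\Z$-basis $\steinbergbasis$ of the free abelian group $\gfrak(\integers_E)^\Theta$ and apply the two available inputs to the pair $S = \{x, y\}$. Theorem \ref{generating_pair_in_covering_ring}(1) supplies a constant $\gamma > 1$, depending only on the Lie type, with $\abs{S^m} \geq \gamma^m$ for every $m \in \N$. Lemma \ref{lemma:entry_bound} supplies a constant $K > 1$, depending only on the Lie type and on the fixed pair $x, y$, such that $\norm{S^m} \leq K^m$ in the $\steinbergbasis$-coordinates.

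Next I would use the inertness of $p$ in $E$ to pass to $\F_{p^d}$: then $\integers_E / p\integers_E \cong \F_{p^d}$, the reduction of $\steinbergbasis$ is an $\F_p$-basis of $\gfrak(\F_{p^d})^\Theta$, and by Lemma \ref{lemma:covering_surjection} (using $p > d$) the kernel of $\pi^\Theta$ is precisely $p\,\gfrak(\integers_E)^\Theta$. Consequently $\pi^\Theta$ is injective on any subset $T \subseteq \gfrak(\integers_E)^\Theta$ satisfying $2 \norm{T} < p$. Choosing $c = 1/(3 \log K)$ and $m = \floor{c \log p}$ gives $2 K^m \leq 2 p^{1/3} < p$ for all large enough $p$, so $\pi^\Theta$ is injective on $S^m$. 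Since $\pi$ is a Lie ring homomorphism we have $\pi(S^m) \subseteq \pi(S)^m$, and therefore
\[
    \abs{\pi(S)^m} \geq \abs{\pi(S^m)} = \abs{S^m} \geq \gamma^m \geq p^{\delta}
\]
for any fixed $\delta < c \log \gamma$, provided $p$ is large enough.

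The only subtle point is the compatibility of the $\steinbergbasis$-coordinates with the reduction map: I need $\steinbergbasis$ to descend to an $\F_p$-basis modulo $p$, which is exactly where both the inertness assumption on $p$ (so that $\integers_E/p \cong \F_{p^d}$) and the mild condition $p > d$ (for the averaging in Lemma \ref{lemma:covering_surjection}) enter. Once this is secured, the argument reduces to a direct comparison of the exponential lower bound $\gamma^m$ with the exponential upper bound $K^m$, balanced at $m \asymp \log p$.
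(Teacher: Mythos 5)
Your proof is correct and follows essentially the same route as the paper: combine the exponential lower bound $\abs{S^m}\geq\gamma^m$ with the exponential coefficient bound $\norm{S^m}\leq K^m$ from \Cref{lemma:entry_bound}, pick $m\asymp\log p$ so that $\pi^\Theta$ is injective on $S^m$, and transfer the count to $\pi(S)^m$. The paper's proof writes out the explicit choices of $c$ and $\delta$ in terms of $\abs{\steinbergbasis}^2 N\norm{S}$ rather than packaging them into a single constant $K$, but the argument and the role played by inertness of $p$ are identical.
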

\begin{proof}
    Let $\gamma$ be the exponential growth constant of $\gfrak(\integers_E)^\Theta$ with respect to the set $S$. Take
    \[
        \delta = \frac{2}{3} \frac{\log \gamma}{\log (\abs{\steinbergbasis}^2 N \norm{S})}, \quad
        c = \frac{3 \delta}{2 \log \gamma}, \quad
        m = \floor{c\log p}, \quad
        p \geq \gamma^{2/\delta},
    \]
    as well as $p$ large enough so that \Cref{generating_pair_in_covering_ring} holds. With these choices, we have
    \[
        \abs{S^m} 
        \geq \gamma^m
        \geq \gamma^{-1} p^{c \log \gamma}
        = \gamma^{-1} p^{3 \delta/2}
        \geq p^{\delta}.
    \]
    Using \Cref{lemma:entry_bound} and taking into account that $c = 1/\log (\abs{\steinbergbasis}^2 N \norm{S})$, we also have
    \[
        \norm{S^m} 
        \leq \frac{1}{\abs{\steinbergbasis}^2 N} \left(\abs{\steinbergbasis}^2 N \norm{S}\right)^m 
        \leq \frac{1}{2} p^{c \log (\abs{\steinbergbasis}^2 N \norm{S})}
        = \frac{1}{2} p.
    \]
    The projection $\pi$ is thus injective on $S^m$ and so we have
    \[
        \abs{\pi(S)^m} = \abs{S^m} \geq p^\delta,
    \]
    completing the proof.
\end{proof}

\subsubsection{Initial growth of random pairs}

We now show that the same conclusion as in the previous lemma holds with high probability for random pairs of elements of $\gfrak(\F_{p^d})^\Theta$ as $p$ tends to infinity.

Let us first introduce some notation. A \emph{Lie word in two letters} is an element $w$ of the free Lie ring on two generators $x_1, x_2$. The \emph{length} of the Lie word $w$ is the smallest number $k$ such that $w \in \{x_1, x_2\}^k$. For any Lie algebra $\gfrak$ over $K$ and two elements $a,b \in \gfrak$, there is a unique homomorphism from the free Lie algebra $F = \langle x_1, x_2 \rangle$ to $\gfrak$ sending $x_1$ to $a$ and $x_2$ to $b$. This gives a well-defined \emph{word map} $w \colon \gfrak \times \gfrak \to \gfrak$ by evaluating the Lie word $w$ at any two elements $a, b$. 

\begin{lemma}
    \label[lemma]{random_pairs_generate_a_large_subset}
    For every number field $E$ of degree $d$, there are constants $c, \delta > 0$ such that the following holds for all large enough primes $p$ that are inert in $E$. Let $X,Y \in \gfrak(\F_{p^d})^\Theta$ be independent uniformly random elements. Then
    \[
        \Pr_{X, Y}\left(\abs{\{X, Y\}^{\floor{c\log p}}} < p^\delta\right) \leq \frac{c\log p}{p^{1 - 2\delta}},
    \]
\end{lemma}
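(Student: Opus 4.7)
The plan is to combine \Cref{lemma:initial_growth} with a Schwartz--Zippel argument. The heuristic is that if the ball generated by a random pair $(X, Y)$ is too small, many distinct two-letter Lie words must collide at $(X, Y)$; but each such collision is a polynomial identity that is known to fail at a specific characteristic-zero witness, so Schwartz--Zippel controls how often it can hold.

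First I would invoke \Cref{lemma:initial_growth} to obtain a pair $x, y \in \gfrak(\integers_E)^\Theta$ such that the projection $\pi$ is injective on $\{x, y\}^m$ and $\abs{\pi(\{x, y\})^m} \geq p^\delta$, where $m = \floor{c \log p}$. For each element of $\pi(\{x, y\})^m$, choose a realizing Lie word in two letters of length at most $m$. This produces $p^\delta$ distinct two-letter Lie words $w_1, \dots, w_{p^\delta}$ whose values $w_i(\pi(x), \pi(y)) = \pi(w_i(x, y))$ are pairwise distinct in $\gfrak(\F_{p^d})^\Theta$.

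Next, observe that the event $\abs{\{X, Y\}^m} < p^\delta$ forces, by pigeonhole, a collision $w_i(X, Y) = w_j(X, Y)$ for some $i \neq j$, so the union bound reduces the problem to bounding $\Pr_{X, Y}\bigl((w_i - w_j)(X, Y) = 0\bigr)$ for each fixed pair. Expanding $w_i - w_j$ in an $\F_p$-basis of $\gfrak(\F_{p^d})^\Theta$ turns $(w_i - w_j)(X, Y)$ into a tuple of polynomials of degree at most $m$ in the $2\dim_{\F_p} \gfrak(\F_{p^d})^\Theta$ coordinates of $(X, Y)$. Since $(w_i - w_j)(\pi(x), \pi(y)) \neq 0$ by injectivity of $\pi$ on $\{x, y\}^m$, at least one coordinate polynomial is not identically zero, so the Schwartz--Zippel lemma over $\F_p$ bounds each term by $m/p$. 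A union bound over $\binom{p^\delta}{2} \leq p^{2\delta}/2$ pairs gives the overall estimate $c \log p /(2 p^{1 - 2\delta})$, which absorbs into the claimed bound.

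The main point requiring care is the transfer step: converting nonvanishing of $(w_i - w_j)(x, y)$ in the characteristic-zero covering ring into nonvanishing of at least one $\F_p$-coordinate polynomial of $(w_i - w_j)(X, Y)$ on $\gfrak(\F_{p^d})^\Theta$. This is exactly what the injectivity of $\pi$ on the ball $\{x, y\}^m$, guaranteed by \Cref{lemma:initial_growth}, delivers for free. The rest is bookkeeping: enumerating the witness words, invoking Schwartz--Zippel, and a union bound.
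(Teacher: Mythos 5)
Your proposal is correct and matches the paper's proof in all essentials: obtain $\floor{p^\delta}$ Lie words of length at most $m = \floor{c\log p}$ realizing distinct values at the witness pair via \Cref{lemma:initial_growth}, note that a small ball forces a collision among these words, and bound each collision event by Schwartz--Zippel since the coordinate polynomials are nonzero (witnessed at $(\pi(x),\pi(y))$) of degree at most $m$. The only cosmetic difference is that the paper phrases the final count as the size of an incidence set and its projection rather than as an explicit union bound over pairs, but the computation is identical.
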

\begin{proof}
    Let $p$ be inert in $E$. Thus $\gfrak(E)^\Theta$ is a $\Q$-covering of $\gfrak(\F_{p^d})^\Theta$. Let the values of $c,\delta$ and the elements $x,y$ be as in the previous lemma. Let $m = \floor{c\log p}$. Let $T \subseteq \{x, y\}^m \subseteq \gfrak(\F_{p^d})^\Theta$ be a subset of size $\floor{p^\delta}$. To each element $u \in T$ we assign a Lie word $w_u$ in two letters of length at most $m$ such that $w_u(x, y) = u$. Let 
    \[
        I = \{ (u,v,X,Y) \in T \times T \times \gfrak(\F_{p^d})^\Theta \times \gfrak(\F_{p^d})^\Theta \mid u \neq v \ \land \ w_u(X, Y) = w_v(X, Y) \},
    \]
    equipped with the natural projections 
    \[
    p_T \colon I \to T \times T, \quad p_{\gfrak} \colon I \to \gfrak(\F_{p^d})^\Theta \times \gfrak(\F_{p^d})^\Theta. 
    \]
    We claim that for distinct elements $u, v \in T$, the preimage $p_T^{-1}(u, v)$ is of smallish size. Indeed, since $u \neq v$, at least one of the coefficients of the element $u-v$ in the basis $\pi(\steinbergbasis)$ is nonzero. Now, for any $X = \sum_{b \in \steinbergbasis} X_b \pi(b)$ and $Y = \sum_{b \in \steinbergbasis} Y_b \pi(b)$ in $\gfrak(\F_{p^d})^\Theta$, the corresponding coefficient of the element $w_u(X, Y) - w_v(X, Y)$ in the basis $\pi(\steinbergbasis)$ is a polynomial $P$ in the $2 \abs{\steinbergbasis}$ variables $X_b, Y_b$ of total degree at most $m$. It follows from the Schwartz-Zippel lemma that we then have $w_u(X, Y) = w_v(X, Y)$ for at most $m p^{2 \abs{\steinbergbasis} - 1}$ pairs $X, Y$. In other words, the fibers of $p_T$ over distinct pairs are of size at most $m p^{2 \abs{\steinbergbasis} - 1}$. Since $T$ is of size $\floor{p^\delta}$, we conclude that 
    \[
        \abs{I} \leq m p^{2 \abs{\steinbergbasis} - 1 + 2 \delta}
        \leq c p^{2 \abs{\steinbergbasis} - 1 + 2\delta} \log p
        = \frac{c \log p}{p^{1 - 2 \delta}} \abs{\gfrak(\F_{p^d})^\Theta}^2.
    \]
    Therefore the projection $p_{\gfrak}(I)$ is bounded by the same value. But this projection consists of precisely of those pairs $(X, Y)$ for which the elements $\{w_u(X, Y) \mid u \in T\}$ are not pairwise distinct. The lemma follows.
\end{proof}

\subsubsection{Generation by random pairs}

Next, we prove that two random elements $X, Y$ generate the entire Lie algebra with high probability as $p$ tends to infinity. As in the previous argument, this follows from the Schwartz-Zippel lemma and the fact that at least one generating pair exists when the underlying Lie type is fixed.\footnote{For varying Lie types and low characteristics, two-generation does not directly follow from the characteristic zero case. See \cite{cantor2025two} for the example of $\slfrak_n(\F_p)$ as $n \to \infty$.}

\begin{proposition}
Let $\gfrak$ be an $\F_p$-form of a classical Lie algebra. Let $X, Y \in \gfrak$ be independent uniformly random elements. Then $X,Y$ generate the Lie algebra $\gfrak$ with probability tending to $1$ as $p \to \infty$ and the Lie type of $\gfrak$ is fixed.
\end{proposition}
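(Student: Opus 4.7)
The approach is to express the condition that $X, Y$ generate $\gfrak$ as the non-vanishing of a single polynomial on $\gfrak \times \gfrak$, and then apply the Schwartz-Zippel lemma. Fix an $\F_p$-basis of $\gfrak$ obtained by reducing an integral basis of the covering ring $\gfrak(\integers_E)^\Theta$, thereby identifying $\gfrak \times \gfrak$ with $\F_p^{2\dim \gfrak}$. For any choice of Lie words $w_1, \dots, w_{\dim \gfrak}$ in two letters of bounded length, the map sending $(X, Y)$ to the determinant of the matrix whose columns are the coordinates of $w_i(X, Y)$ is a polynomial $P(X, Y)$ over $\F_p$ of degree bounded in terms of the Lie type only, with the crucial property that $P(X, Y) \neq 0$ forces the $w_i(X, Y)$ to be linearly independent, and hence $\{X, Y\}$ to generate $\gfrak$.

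The heart of the plan is to exhibit specific words $w_i$ for which $P$ is not identically zero on $\gfrak \times \gfrak$ for all sufficiently large $p$. To this end, invoke \Cref{generating_pair_in_covering_ring} to obtain a pair $(x, y) \in \gfrak(\integers_E)^\Theta$ generating the $\Q$-covering $\gfrak(E)^\Theta$. Two-generation in $\gfrak(E)^\Theta$ over $\Q$ then implies the existence of $\dim \gfrak$ Lie words $w_1, \dots, w_{\dim \gfrak}$ such that the vectors $w_i(x, y)$ form a $\Q$-basis of $\gfrak(E)^\Theta$, so that the corresponding determinant $P(x, y)$, evaluated in the basis coming from $\steinbergbasis$, is a non-zero element of $E$. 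Its reduction modulo $p$ is therefore non-zero for all but finitely many primes $p$, so the polynomial $P$ with coefficients in $\F_p$ satisfies $P(\pi(x), \pi(y)) \neq 0$ and in particular is not identically zero on $\gfrak \times \gfrak$.

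Applying the Schwartz-Zippel lemma on $\F_p^{2\dim \gfrak}$ then gives
\[
    \Pr\bigl(P(X, Y) = 0\bigr) \leq \frac{\deg P}{p},
\]
which tends to zero as $p \to \infty$, because $\deg P$ is bounded solely in terms of the Lie type. Combined with the implication $P(X, Y) \neq 0 \Rightarrow \langle X, Y \rangle = \gfrak$, this yields the claimed probability statement.

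The main obstacle is bridging characteristic zero, where the generating pair $(x,y)$ and the accompanying Lie words are exhibited, with characteristic $p$, where non-vanishing of $P$ is actually needed; this is handled by passing through the integer-like determinant in $\integers_E$, which reduces non-trivially modulo all but finitely many primes. A secondary subtlety is the non-split case, where one naturally restricts to primes inert in $E$, potentially varying $E$ across a few choices to exhaust a density-one set of primes via the Chebotarev density theorem as outlined in the introduction.
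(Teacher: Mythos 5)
Your proposal is correct and follows essentially the same route as the paper: fix the Steinberg-type basis, form the determinant polynomial in the coordinate variables from a bounded family of Lie words, exhibit non-vanishing by evaluating at the reduction of the covering-ring generating pair from \Cref{generating_pair_in_covering_ring}, and conclude with Schwartz–Zippel. The paper differs only in minor bookkeeping: it cites Bois for an explicit shape of the spanning Lie monomials to obtain the concrete degree bound $\dim(\gfrak)^3$, and it works with $\pi(x),\pi(y)$ directly in characteristic $p$ rather than reducing a nonzero determinant from $\integers_E$, but these are two phrasings of the same mechanism.
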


\begin{proof}
Let $\gfrak = \gfrak(\F_{p^d})^\Theta$, and let $E$ be a number field of degree $d$ in which $p$ is inert. By \Cref{generating_pair_in_covering_ring}, $\gfrak$ is generated by the elements $\pi(x), \pi(y)$ for sufficiently large $p$, depending on the Lie type of $\gfrak$. Following \cite[proof of Proposition 1.1.3]{bois2009generators}, we can find $\dim(\gfrak)$ Lie monomials that span $\gfrak$. These monomials are of the form
\begin{equation}
    [\pi(y),[\pi(y), \dots ,[\pi(y),[\pi(x),[\pi(y), \dots ,[\pi(y),[\pi(x),[\pi(y), \dots [\pi(y), \pi(x)] \dots]]
\end{equation}
with at most $\dim(\gfrak)$ occurrences of $\pi(x)$, and at most $\dim(\gfrak)-1$ of $\pi(y)$ between any two occurrences of consecutive $\pi(x)$ (also before the first $\pi(x)$).

For any $X = \sum_{b \in \steinbergbasis} X_b \pi(b)$ and $Y = \sum_{b \in \steinbergbasis} Y_b \pi(b)$ in $\gfrak$, where $X_b, Y_b$ are $2 \dim(\gfrak)$ variables in $\F_p$, each monomial as above evaluates to an element of $\gfrak$, which can be expressed in terms of the basis $\pi(\steinbergbasis)$. The total degree of any coefficient appearing in the expansion of these monomials in terms of the variables $X_b, Y_b$ is at most $\dim(\gfrak)^2$. When $X = \pi(x)$ and $Y = \pi(y)$, a subset of these monomials forms a basis of $\gfrak$, so the determinant of the matrix formed by the coefficients of these monomials is a nonzero polynomial in the variables $X_b, Y_b$ with total degree at most $\dim(\gfrak)^3$. By the Schwartz-Zippel lemma, the probability that this determinant vanishes for random pairs $X, Y$ is at most $\dim(\gfrak)^3/p$. Therefore, the proportion of pairs $X, Y$ for which the corresponding $\dim(\gfrak)$ monomials are linearly independent approaches $1$ as $p \to \infty$.
\end{proof}

\subsubsection{Completing the covering}

Once the two random elements $X, Y$ generate a subset of size at least $p^\delta$, we can complete the covering of the whole Lie algebra in a modest number of extra steps using the sum-bracket theorem. 

\begin{lemma}
    \label[lemma]{completing_the_covering_from_a_large_subset}
    Let $\gfrak$ be an $\F_p$-form of a classical Lie algebra. For every $\delta > 0$ and every generating subset $A \subseteq \gfrak$ with size $\abs{A} \geq \abs{\gfrak}^\delta$, we have $A^m = \gfrak$ for some constant $m$ depending only on $\delta$ and the Lie type of $\gfrak$. 
\end{lemma}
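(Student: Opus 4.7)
The plan is to iterate the sum-bracket theorem \Cref{corollary:sum_bracket}, which guarantees that each application multiplies the exponent of $\abs{A}$ by a factor of $1+c$ at the cost of passing from $A$ to $A^k$, where $c > 0$ is absolute and $k$ depends only on the Lie type of $\gfrak$. Since $c$ does not depend on $p$ or on the Lie type, only a constant number of iterations (depending on $\delta$ alone) is needed to drive the exponent past $1$, at which point the corresponding power of $A$ is forced to exhaust $\gfrak$.

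Concretely, I would set $A_0 = A$ and $A_{i+1} = A_i^k$, so that $A_i \subseteq A^{k^i}$. Because $A \subseteq A_i$ and $A$ generates $\gfrak$, each $A_i$ itself generates $\gfrak$, and \Cref{corollary:sum_bracket} applies to it. A straightforward induction on $i$ then shows that either $A_i = \gfrak$ or
\[
    \abs{A_i} \geq \min\{\abs{\gfrak}^{\delta(1+c)^i}, \abs{\gfrak}\}.
\]
Taking $n = \ceil{\log(1/\delta)/\log(1+c)}$ ensures $\delta(1+c)^n \geq 1$, so $\abs{A_n} \geq \abs{\gfrak}$ and hence $A_n = \gfrak$. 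Setting $m = k^n$ then yields $A^m \supseteq A_n = \gfrak$, with $m$ depending only on $\delta$ and the Lie type.

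I do not foresee a real obstacle: the argument is a clean size-based bootstrap of \Cref{corollary:sum_bracket}. The only mild care point is verifying that every intermediate $A_i$ still generates $\gfrak$ so that the corollary applies, but this is automatic from the inclusion $A \subseteq A_i$. The value of $m$ obtained in this way is explicit, namely $m = k^{\ceil{\log(1/\delta)/\log(1+c)}}$, which makes the dependence on $\delta$ and the Lie type transparent.
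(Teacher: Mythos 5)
Your proof is correct and follows essentially the same route as the paper: both iterate \Cref{corollary:sum_bracket} to drive the exponent past $1$, with $m = k^{\lceil \log(1/\delta)/\log(1+c)\rceil}$. You spell out the induction (including the observation that each $A_i$ still generates because $A \subseteq A_i$) more explicitly than the paper, which states the iterated inequality directly, but the argument is the same.
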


\begin{proof}
    By \Cref{corollary:sum_bracket}, there exist an absolute constant $c > 0$ and a constant $k$ (depending only on the Lie type of $\gfrak$) such that for all $l \in \N$, we have
    \[
        \abs{A^{k^l}} \geq \min\left\{\abs{A}^{(1 + c)^l}, \abs{\gfrak}\right\}.
    \]
    Taking $l \geq \log(1 / \delta) / \log(1 + c)$, we have $\abs{A}^{(1 + c)^l} \geq \abs{\gfrak}$, and thus $A^{k^l} = \gfrak$.
\end{proof}

\subsection{Diameters of split forms}

\begin{theorem} \label[theorem]{theorem:diameter_bound_split}
    Let $\gfrak(\F_p)$ be a split form of a classical Lie algebra. Then there is a constant $C > 0$, depending only on the Lie type $\gfrak$, such that for a uniformly random pair of elements $X, Y \in \gfrak(\F_p)$, we have
    \[
        \diam ( \gfrak(\F_p), \{X, Y\} ) \leq C \log p
    \]
    with probability tending to $1$ as $p \to \infty$.
\end{theorem}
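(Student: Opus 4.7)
The plan is to chain together the three results developed in Section 4.1, specialized to the split setting where $d = 1$, $E = \Q$, $\Theta$ is trivial, and every prime is (vacuously) inert in $E$. All three lemmas then apply directly to $\gfrak(\F_p)$.

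First, I invoke \Cref{random_pairs_generate_a_large_subset} with $E = \Q$ to extract constants $c, \delta > 0$ (chosen so that $\delta < 1/2$, which is permissible since the proof of that lemma allows $\delta$ to be shrunk freely). For all sufficiently large $p$ and a uniformly random pair $(X, Y) \in \gfrak(\F_p)^2$, we have
\[
    \abs{\{X, Y\}^{\floor{c \log p}}} \geq p^{\delta}
\]
except on an event $\mathcal{E}_1$ of probability at most $c \log p / p^{1 - 2\delta}$, which tends to $0$ as $p \to \infty$.

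Second, the generation proposition preceding Section 4.1.4 says that $\{X, Y\}$ fails to generate $\gfrak(\F_p)$ only on an event $\mathcal{E}_2$ of probability at most $\dim(\gfrak)^3/p$, which likewise tends to $0$. On the complement of $\mathcal{E}_1 \cup \mathcal{E}_2$, the set $A = \{X, Y\}^{\floor{c \log p}}$ is a generating set of $\gfrak(\F_p)$ of size at least $p^\delta \geq \abs{\gfrak(\F_p)}^{\delta / \dim \gfrak}$. Since $\dim \gfrak$ depends only on the Lie type, \Cref{completing_the_covering_from_a_large_subset} applied with parameter $\delta / \dim \gfrak$ produces a constant $m$, depending only on the Lie type, such that $A^m = \gfrak(\F_p)$. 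Therefore
\[
    \diam(\gfrak(\F_p), \{X, Y\}) \leq m \floor{c \log p} \leq C \log p
\]
with $C = m c$, and this happens with probability at least $1 - \Pr(\mathcal{E}_1) - \Pr(\mathcal{E}_2) \to 1$.

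Nothing in this outline is truly an obstacle, since all the heavy lifting, namely the exponential growth of the $\Q$-covering ring, the Schwartz-Zippel argument controlling collisions of Lie words, and the sum-bracket theorem underpinning the completion step, has already been carried out. The only care required is in coordinating the two failure events and checking that $\delta$ can be chosen small enough (below $1/2$) to keep $\Pr(\mathcal{E}_1) \to 0$; this is immediate from the statement of \Cref{random_pairs_generate_a_large_subset}. The non-split case, which will occupy the rest of the section, is where the genuine difficulty appears: varying $p$ forces one to switch between different $\Q$-coverings $\gfrak(\integers_E)^\Theta$, and one must invoke Chebotarev to cover a density-$1$ set of primes.
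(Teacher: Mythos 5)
Your proof is correct and follows the same route as the paper's, which is a one-line appeal to ``combining \Cref{random_pairs_generate_a_large_subset} (with $E = \Q$) and \Cref{completing_the_covering_from_a_large_subset}.'' You are actually more careful than the paper on two points it glosses over: you explicitly invoke the (unnamed) proposition in Section 4.1.3 to rule out the failure event in which $\{X, Y\}$ does not generate --- which is genuinely needed, since \Cref{completing_the_covering_from_a_large_subset} has a generating hypothesis and a large ball $\{X,Y\}^{\floor{c \log p}}$ could in principle sit inside a proper subalgebra --- and you note that the $\delta$ from \Cref{random_pairs_generate_a_large_subset} must be (and can be) taken below $1/2$ so the probability bound $c\log p / p^{1-2\delta}$ actually decays. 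The bookkeeping $(\{X,Y\}^{\floor{c\log p}})^m \subseteq \{X,Y\}^{m\floor{c\log p}}$ and the conversion $p^\delta = \abs{\gfrak(\F_p)}^{\delta/\dim\gfrak}$ are both correct, and all constants depend only on the Lie type as required.
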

\begin{proof}
The statement immediately follows by combining \Cref{random_pairs_generate_a_large_subset} (with $E = \Q$) and \Cref{completing_the_covering_from_a_large_subset}.
\end{proof}

\subsection{Diameters of non-split forms}

\begin{theorem} \label[theorem]{theorem:diameter_bound_non_split}
    For any $\epsilon > 0$, there is a set of primes with density at least $1 - \epsilon$ such that the following holds. Let $\gfrak(\F_{p^d})^\Theta$ be a non-split form of a classical Lie algebra over $\F_p$. Then there is a constant $C_\epsilon > 0$, depending only on $\epsilon$ and the Lie type of $\gfrak(\F_{p^d})^\Theta$, such that for a uniformly random pair of elements $X, Y\in \gfrak(\F_{p^d})^\Theta$, we have
    \[
        \diam\left( \gfrak(\F_{p^d})^\Theta, \{X, Y\} \right) \leq C_\epsilon \log p
    \]
    with probability tending to $1$ as $p \to \infty$.
\end{theorem}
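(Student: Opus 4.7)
The plan is to adapt the proof of \Cref{theorem:diameter_bound_split} to the non-split case by replacing the single covering $\gfrak(\Z) \to \gfrak(\F_p)$ with a family of $\Q$-coverings $\gfrak(\integers_E)^\Theta \to \gfrak(\F_{p^d})^\Theta$, where $E$ ranges over degree-$d$ number fields in which $p$ is inert and $d = \abs{\vartheta} \in \{2, 3\}$. For any fixed such $E$ and any $p$ inert in it, the combined application of \Cref{generating_pair_in_covering_ring}, \Cref{random_pairs_generate_a_large_subset}, and \Cref{completing_the_covering_from_a_large_subset} (verbatim as in the split case) already yields $\diam(\gfrak(\F_{p^d})^\Theta, \{X, Y\}) \leq C_E \log p$ with probability $1 - o(1)$, where $C_E$ depends on $E$ and the Lie type. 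The task reduces to producing a density $1$ set of primes that are inert in at least one $E$ from a carefully chosen family, with $C_E$ bounded uniformly.

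The arithmetic cover is supplied by the Chebotarev density theorem. For $d = 2$, I would take $E_i = \Q(\sqrt{q_i^*})$ with $q_i^* = (-1)^{(q_i - 1)/2} q_i$ and $q_i$ the $i$-th odd prime; these are pairwise linearly disjoint quadratic fields whose compositum $E_1 \cdots E_n$ is Galois over $\Q$ with group $(\Z/2)^n$. For $d = 3$, I would take the unique cubic subfields $E_i \subseteq \Q(\zeta_{\ell_i})$ for a sequence of primes $\ell_i \equiv 1 \pmod 3$, again pairwise linearly disjoint. In both cases, Chebotarev applied to the compositum shows that the density of primes split in all of $E_1, \ldots, E_n$ equals $d^{-n}$. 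Thus the set $\mathcal{T}$ of primes inert in at least one $E_i$ has density $1$, and to each $p \in \mathcal{T}$ we may assign the least $i(p)$ with $p$ inert in $E_{i(p)}$.

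The main obstacle is ensuring that the constant $C$ in the theorem depends only on the Lie type rather than on $i(p)$. Tracing through \Cref{lemma:initial_growth} and \Cref{completing_the_covering_from_a_large_subset}, the value of $C_{E_i}$ is controlled by $\abs{\steinbergbasis}$ (fixed by Lie type), together with $N_i = \norm{[\steinbergbasis_i, \steinbergbasis_i]}$, $\norm{S_i}$, and the exponential growth constant $\gamma_i$ of the favorable pair. These last three quantities depend on the integral basis of $\integers_{E_i}$ and on $(x_i, y_i)$, and could a priori grow with $i$. I would address this by refining the construction of \Cref{generating_pair_in_covering_ring}: build $(x_i, y_i)$ from a fixed generating pair of the sub-ring $\gfrak(\Z)^\vartheta$ together with a single $\Theta$-anti-invariant element of $\integers_{E_i}$ chosen to have minimal trace and norm, so that the structure constants of the resulting $\steinbergbasis_i$ and the coordinates of $(x_i, y_i)$ remain bounded by quantities depending only on the Lie type. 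With a uniform $C$ in hand, the failure probability $c_{i(p)} (\log p) / p^{1 - 2 \delta_{i(p)}}$ from \Cref{random_pairs_generate_a_large_subset} tends to $0$ as $p \to \infty$ through $\mathcal{T}$, and the bound $\diam(\gfrak(\F_{p^d})^\Theta, \{X, Y\}) \leq C \log p$ follows.
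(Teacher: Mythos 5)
Your overall strategy mirrors the paper's: you construct a family of degree-$d$ number fields from cyclotomic subfields (for $d=2$ these are the quadratic fields $\Q(\sqrt{q_i^*})$, which are exactly the degree-$2$ subfields of $\Q(\zeta_{q_i})$ used in the paper), prove joint independence, invoke the Chebotarev density theorem on the compositum to obtain a density-$1$ set of primes, and apply the split-case covering-ring machinery for each such prime. The paper derives joint independence from coprime discriminants rather than from linear disjointness, and packages the Chebotarev step as a multipolynomial statement, but these are interchangeable reformulations of the same argument.

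The substantive difference is that you explicitly raise, and attempt to close, the question of whether the constant $C$ can be made uniform over the infinite family of covering fields $E_i$, a point the paper glosses over by asserting that the rest of the proof is identical to the split case. The concern is real: the constants $c,\delta$ in \Cref{lemma:initial_growth} depend on $N_i = \norm{[\steinbergbasis_i,\steinbergbasis_i]}$ and $\norm{S_i}$, and these feed through \Cref{random_pairs_generate_a_large_subset} and \Cref{completing_the_covering_from_a_large_subset} into $C_{E_i}$. However, the remedy you propose does not close the gap. In a quadratic field $E_i$ with discriminant $q_i^*$, every nonzero $\Theta$-anti-invariant algebraic integer lies in $\sqrt{q_i^*}\,\Z$, hence has trace $0$ and norm of absolute value at least $q_i$; there is no ``anti-invariant element of minimal trace and norm'' bounded independently of $i$. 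Consequently the bracket of two anti-invariant components of $\gfrak(\integers_{E_i})^\Theta$ picks up a factor of $q_i$, so $N_i$ necessarily grows, $c_i,\delta_i$ shrink, and $C_{E_i}$ is not controlled by the Lie type alone. Moreover, the sub-ring $\gfrak(\Z)^\vartheta$ you propose to start from has strictly smaller rank than $\gfrak(\integers_{E_i})^\Theta$, so a fixed generating pair there cannot generate the covering ring without the anti-invariant ingredient whose size you cannot bound. Resolving the uniformity issue requires a different device --- for instance a rescaled norm on the covering ring or a verification that the $\log q_i$ contributions to $c_i$ and to the number of sum-bracket iterations cancel in the final constant --- and as written both your proposal and the paper's proof leave this unestablished.
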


We rely again on the covering map from \Cref{lemma:covering_surjection}. Instead of covering all primes with a single lattice, we can pick an irreducible polynomial $f \in \Z[x]$ with a Galois number field $E = \Q[x]/(f(x))$ and form the Lie ring $\gfrak(\integers_E)^\Theta$. As long as the prime $p$ is such that $f \bmod p$ is irreducible, we can reduce $\gfrak(\integers_E)^\Theta$ modulo $p$ and obtain a non-split form over $\F_p$. After this, the same argument as in the proof of \Cref{theorem:diameter_bound_split} applies. There are two caveats here. The first one is that this argument only works for primes $p$ such that $f \bmod p$ is irreducible. We will address this by using several independent polynomials $f$ in order to cover as many primes as possible. The second caveat is that our argument involves the choice of a number field $E$, and the resulting constants $c, \delta$ in \Cref{lemma:initial_growth} depend on $E$. This is a more serious issue, as we consequentially cannot guarantee that the final constant $C_\epsilon$ in the statement of the theorem is uniform over an infinite collection of fields $E$ (see \Cref{remark:d2_need_infinitely_many_fields}) and therefore need to restrict to using only a finite collection. We now show how to construct such a finite collection of fields $E$.

\subsubsection{Chebotarev density theorem}

Let us first recall the classical theorem that allows us to control the density of primes modulo which a single polynomial is irreducible.

\begin{theorem}[Chebotarev density theorem]
Let $K$ be a Galois number field with $G = \Gal(K/\Q)$, and let $C \subseteq G$ be a conjugacy closed set. Then the set of primes $p$ such that $K/\Q$ is unramified at $p$ and whose associated Frobenius conjugacy class $\Frob_p(K/\Q)$ is contained in $C$ has natural density $\abs{C}/\abs{G}$.
\end{theorem}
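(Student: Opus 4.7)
The plan is to prove Chebotarev's density theorem via the analytic method, generalizing Dirichlet's argument for primes in arithmetic progressions. To each irreducible complex character $\chi$ of $G = \Gal(K/\Q)$, attach the Artin $L$-function $L(s, \chi)$ defined as an Euler product whose local factor at an unramified prime $p$ is $\det(1 - \chi(\Frob_p) p^{-s})^{-1}$, with appropriate correction factors at the finitely many ramified primes. The trivial character yields (up to finitely many Euler factors) the Riemann zeta function, which has a simple pole at $s = 1$. The entire argument rests on the analytic behaviour of the remaining $L(s,\chi)$ near $\Re(s) = 1$.

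The key algebraic reduction is Brauer's induction theorem: every character of a finite group is a $\Z$-linear combination of characters induced from one-dimensional characters of subgroups. Since induced Artin $L$-functions coincide with the $L$-functions of the inducing characters on subgroups, and a one-dimensional character of $H \leq G$ corresponds (via class field theory applied to the fixed field $K^H$) to a Hecke character, this expresses every $L(s, \chi)$ as a product and quotient of classical Hecke $L$-functions to integer exponents. The required meromorphic continuation and non-vanishing are thus reduced to the abelian (Hecke) setting.

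Assuming the analytic input, note that $C$ is a union of conjugacy classes, so character orthogonality gives
\[
    \mathbf{1}_C(g) \;=\; \sum_{\chi \text{ irred.}} \langle \mathbf{1}_C, \chi \rangle \, \chi(g), \qquad \langle \mathbf{1}_C, \chi \rangle = \frac{\abs{C}}{\abs{G}} \, \overline{\chi(g_C)} \text{ if $C$ is a single class.}
\]
Using the standard identity $\log L(s, \chi) = \sum_{p \text{ unram.}} \chi(\Frob_p) p^{-s} + O(1)$ as $s \to 1^+$ and assembling the $\chi$-contributions, one obtains
\[
    \sum_{p \,:\, \Frob_p \subseteq C} p^{-s} \;=\; \frac{\abs{C}}{\abs{G}} \log \frac{1}{s-1} + O(1) \qquad (s \to 1^+),
\]
which is the Dirichlet density statement. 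To upgrade to natural density as claimed, one extends the non-vanishing of each $L(s,\chi)$ to the entire line $\Re(s) = 1$ and establishes moderate growth in vertical strips, from which a Tauberian theorem of Wiener-Ikehara type delivers the prime-counting asymptotic $\pi_C(x) \sim (\abs{C}/\abs{G}) \, x/\log x$, equivalent to the stated natural density.

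The main obstacle is precisely the non-vanishing of Artin $L$-functions on $\Re(s) \geq 1$ for nontrivial $\chi$. Brauer induction reduces this to Hecke $L$-functions, where one adapts the Hadamard-de la Vallée Poussin method to the number-field setting: the classical inequality $3 + 4\cos\theta + \cos 2\theta \geq 0$ forces any putative zero on the line $\Re(s) = 1$ to be incompatible with the simple pole of $\zeta_{K^H}(s)$ at $s = 1$, whose residue is controlled by the analytic class number formula. This analytic non-vanishing, rather than any algebraic manipulation, is what makes Chebotarev's theorem substantially deeper than Dirichlet's theorem over $\Z$.
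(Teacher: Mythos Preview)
The paper does not prove this statement at all: it is introduced with the phrase ``Let us first recall the classical theorem\ldots'' and is invoked as a black box from algebraic number theory. There is therefore no proof in the paper against which to compare your proposal.

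That said, your outline is the standard modern route to Chebotarev (Artin $L$-functions, Brauer induction to reduce to Hecke $L$-functions, non-vanishing on $\Re(s)=1$ via the $3+4\cos\theta+\cos 2\theta$ device, and a Tauberian theorem to pass from Dirichlet to natural density), and it is broadly correct as a high-level sketch. Two minor remarks: the Euler factor should be written $\det(I - \rho(\Frob_p)p^{-s})^{-1}$ for the representation $\rho$ affording $\chi$, not $\det(1 - \chi(\Frob_p)p^{-s})^{-1}$; and your displayed formula for $\langle \mathbf{1}_C, \chi\rangle$ is only valid when $C$ is a single class, whereas the theorem allows $C$ to be any conjugacy-closed set, so in general one sums over the classes contained in $C$. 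Neither affects the structure of the argument. For the purposes of this paper, however, a citation suffices.
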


We will use the theorem with $K = \Q[x]/(f(x))$, where $f \in \Z[x]$ is an irreducible polynomial. Let $\Delta$ be the discriminant of $f$. If a prime $p$ does not divide $\Delta$, then $K/\Q$ is unramified at $p$, and the Frobenius conjugacy class $\Frob_p(K/\Q)$ is a well-defined subset of $G = \Gal(K/\Q)$. Letting $d = \deg f$, we can identify $G$ as a subgroup of $\mathrm{Sym}(d)$ acting on the $d$ roots of $f$. The Frobenius conjugacy class $\Frob_p(K/\Q)$ is represented by a $d$-cycle in $\mathrm{Sym}(d)$ if and only if $f \bmod p$ is irreducible in $\F_p[x]$. Let $C^{\irr}$ be the set of elements of $G$ that act as a $d$-cycle on the roots of $f$. Then the Chebotarev density theorem gives us
\[
    \density \left\{ p \in \PP \mid 
    f \bmod p \text{ is irreducible} 
    \right\} 
    = \frac{\abs{C^{\irr}}}{\abs{G}}.
\]
We will require a version of this property for several independent polynomials. The precise independence condition needed here is as follows: we say that number fields $K_1, \dots, K_N$ are \emph{jointly independent} if, for each $i$, the compositum of the first $i-1$ fields intersects trivially with the $i$-th field, \emph{i.e.},
\[
(K_1 \cdots K_{i-1}) \cap K_i = \Q.
\]
In such a situation, an easy proof by induction shows that the restriction map
\[
\Gal(K_1 \cdots K_N/\Q) \to \Gal(K_1/\Q) \times \cdots \times \Gal(K_N/\Q)
\]
is an isomorphism. Under these assumptions, we can now state the version of the Chebotarev density theorem for several polynomials.

\begin{theorem}
Let $f_1, f_2, \dots,f_N \in \Z[x]$ be irreducible polynomials with jointly independent Galois number fields $K_i = \Q[x]/(f_i(x))$. 
Let
\[
X_i = \left\{ p \in \PP \mid
f_i \bmod p \text{ is irreducible} \right\}.
\] 
Then
\[
\density \left( \bigcup_{i = 1}^N X_i \right)
= 1 - \prod_{i=1}^N \left( 1 - \density(X_i) \right).
\]
\end{theorem}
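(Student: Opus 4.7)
The plan is to lift the analysis to the compositum $L = K_1 K_2 \cdots K_N$ and then apply the single-polynomial version of Chebotarev stated just above to $L/\Q$. By the joint independence hypothesis, together with the observation recalled immediately before the theorem, the restriction map is an isomorphism
\[
    \Gal(L/\Q) \cong G_1 \times \cdots \times G_N, \qquad G_i = \Gal(K_i/\Q),
\]
and Frobenius restricts coordinatewise: for every prime $p$ unramified in $L$ (which excludes only the finitely many primes dividing $\prod_i \Delta(f_i)$, a zero density set), the class $\Frob_p(L/\Q)$ projects to $\Frob_p(K_i/\Q)$ in each factor.

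For each $i$, let $C_i^{\irr} \subseteq G_i$ be the conjugacy-closed set of elements acting as a $\deg f_i$-cycle on the roots of $f_i$. From the discussion preceding the statement, $p \in X_i$ is equivalent to $\Frob_p(K_i/\Q) \subseteq C_i^{\irr}$, and $\density(X_i) = \abs{C_i^{\irr}}/\abs{G_i}$. Reading the events off coordinatewise through the product decomposition,
\[
    p \notin \bigcup_{i=1}^N X_i \iff \Frob_p(L/\Q) \subseteq \prod_{i=1}^N \left(G_i \setminus C_i^{\irr}\right).
\]
The right-hand side is a conjugacy-closed subset of $\Gal(L/\Q)$, so Chebotarev applied to $L/\Q$ yields
\[
    \density\!\left(\PP \setminus \bigcup_{i=1}^N X_i\right)
    = \prod_{i=1}^N \frac{\abs{G_i \setminus C_i^{\irr}}}{\abs{G_i}}
    = \prod_{i=1}^N \left(1 - \density(X_i)\right),
\]
and the stated equality follows upon complementing.

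There is no real obstacle beyond unwinding the definitions. The one point that must be handled carefully is the compatibility of Frobenius with restriction along an unramified subextension, which is what allows the event ``$p \in X_i$'' to depend only on the $i$-th coordinate of $\Frob_p(L/\Q)$. The joint independence hypothesis is used precisely to eliminate correlations between these coordinates; without it, the product decomposition of $\Gal(L/\Q)$ fails and the clean multiplicative formula does not hold.
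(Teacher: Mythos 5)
Your proof is correct and follows essentially the same route as the paper's: pass to the compositum, use joint independence to identify $\Gal(L/\Q)$ with the product $\prod G_i$, observe that Frobenius projects coordinatewise, apply Chebotarev to the conjugacy-closed set $\prod (G_i \setminus C_i^{\irr})$, and complement. The only cosmetic difference is that you phrase the final step as complementation while the paper computes the density of the complement directly.
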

\begin{proof}
Let $G_i = \Gal(K_i/\Q) \leq \mathrm{Sym}(d_i)$, where $d_i = \deg f_i$. Writing $C_i^{\irr}$ for the set of elements of $G_i$ that act as a $d_i$-cycle on the roots of $f_i$, we have
\[
\density(X_i) = \frac{\abs{C_i^{\irr}}}{\abs{G_i}},
\]
Since the number fields $K_i$ are jointly independent, their compositum $K = K_1 \cdots K_N$ with Galois group $G = \Gal(K/\Q)$ satisfies $G = G_1 \times \cdots \times G_N$. For each prime $p$ that does not divide any of the discriminants of $f_i$, the Frobenius conjugacy class $\Frob_p(K/\Q)$ can be identified with the tuple
\[
\Frob_p(K/\Q) = \left( \Frob_p(K_1/\Q), \dots, \Frob_p(K_N/\Q) \right) \in G_1 \times G_2 \times \cdots \times G_N = G.
\]
Hence all polynomials $f_i$ are reducible mod $p$ if and only if
\[
\Frob_p(K/\Q) \subseteq (G_1 \setminus C_1^{\irr}) \times \cdots \times (G_N \setminus C_N^{\irr}).
\]
By the Chebotarev density theorem, the set of primes $p$ for which this occurs has density
\[
\prod_{i = 1}^N \left( 1 - \frac{\abs{C_i^{\irr}}}{\abs{G_i}} \right)
= \prod_{i = 1}^N \left( 1 - \density(X_i) \right). \qedhere
\]
\end{proof}    

\subsubsection{Constructing suitable fields}

Finally it remains to construct the polynomials $f_i$ that will give us the jointly independent Galois number fields $K_i$. In our application, we only need to construct families of polynomials of degree $d = 2$ and $d = 3$ (depending on the order of the corresponding automorphism of the Dynkin diagram). Our construction will be based on the following lemma.

\begin{lemma}
Let $K_1, \dots, K_N$ be Galois number fields with coprime discriminants. Then the fields $K_i$ are jointly independent.
\end{lemma}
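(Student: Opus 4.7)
The plan is to use the classical interplay between ramified primes and discriminants, together with Minkowski's theorem asserting that $\Q$ admits no nontrivial unramified extension. The key fact I would invoke throughout is that the set of rational primes ramified in a number field $L$ is precisely the set of prime divisors of the discriminant $\Delta(L)$, and that ramification is inherited by subfields and controlled by compositum: if $L = L_1 L_2$ then a prime is unramified in $L$ if and only if it is unramified in both $L_1$ and $L_2$.

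I would set $M_i = K_1 \cdots K_{i-1}$ and $F_i = M_i \cap K_i$, and argue by showing that $F_i/\Q$ is everywhere unramified. First, the primes ramified in $M_i$ form the union of primes ramified in $K_1, \dots, K_{i-1}$, i.e., the set of prime divisors of $\prod_{j < i} \Delta(K_j)$. Since ramification is preserved under taking subfields, every prime ramified in $F_i$ is ramified in $M_i$ and hence divides some $\Delta(K_j)$ with $j < i$. On the other hand, $F_i \subseteq K_i$, so every prime ramified in $F_i$ also divides $\Delta(K_i)$. Coprimality of the discriminants then forces the set of primes ramified in $F_i$ to be empty.

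By Minkowski's theorem every nontrivial extension of $\Q$ has nonzero discriminant, hence some ramified prime. Therefore $F_i = \Q$, which is exactly joint independence. Once this is observed, the claimed isomorphism $\Gal(K_1 \cdots K_N/\Q) \to \prod_i \Gal(K_i/\Q)$ used in the preceding theorem follows by the standard induction alluded to in the excerpt, since at each step the restriction map $\Gal(M_i K_i/\Q) \to \Gal(M_i/\Q) \times \Gal(K_i/\Q)$ is injective (both fields are Galois) and, by $M_i \cap K_i = \Q$, the image has the full order $[M_i : \Q][K_i : \Q]$.

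The main (and essentially only) obstacle is simply to cite the two ingredients correctly: that the prime divisors of $\Delta(L_1 L_2)$ coincide with those of $\Delta(L_1)\Delta(L_2)$ (a consequence of the conductor-discriminant formula or of elementary ramification theory via the different), and Minkowski's bound forcing $\abs{\Delta(L)} > 1$ for $L \neq \Q$. No subtle arithmetic is needed beyond these well-known results.
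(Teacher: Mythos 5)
Your proof is correct and follows essentially the same route as the paper's: both reduce to showing that $F_i = (K_1 \cdots K_{i-1}) \cap K_i$ is unramified at every rational prime, by combining the Dedekind discriminant theorem with the fact that ramification in a compositum is controlled by ramification in the factors, and then conclude $F_i = \Q$ from the nonexistence of nontrivial unramified extensions of $\Q$. The only cosmetic difference is that the paper casts the argument as an explicit induction on $N$ and cites multiplicativity of inertia \emph{in the setting of joint independence}, whereas you use the compositum-ramification principle directly (which, for Galois extensions, does not in fact require the inductive hypothesis) and you name Minkowski's bound where the paper leaves the final step implicit.
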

\begin{proof}
Induction on $N$. The base case $N = 1$ is trivial. Assume that $K_1,\dots, K_{N-1}$ are jointly independent. Ramification is determined by triviality of the inertia subgroup, which is multiplicative in the setting of joint independence \cite[Chapter 14]{ribenboim2013classical}. Hence primes that ramify in $K_1 \cdots K_{N-1}$ are precisely those that divide one of the discriminants of $K_1, \dots, K_{N-1}$. On the other hand, if a prime ramifies in $(K_1\cdots K_{N-1}) \cap K_N$, it also ramifies in both $K_1 \cdots K_{N-1}$ and $K_N$, and so it must divide the discriminant of $K_N$ as well as the discriminant of some $K_i$ with $i < N$. Since these are coprime, no primes ramify in the intersection $(K_1 \cdots K_{N-1}) \cap K_N$, so the intersection is trivial.
\end{proof}

We now show how to construct the jointly independent fields $K_i$ with coprime discriminants by using cyclotomic extensions. For each prime $q_i$ that is $1 \pmod{3}$, let $\Q(\zeta_{q_i})$ be a cyclotomic extension, where $\zeta_{q_i}$ is a primitive $q_i$-th root of unity. The Galois group of this extension is cyclic of order $q_i - 1$. This group has an index $d$ subgroup ($d = 2,3$), call it $H_i$. Let $K_i = \Q(\zeta_{q_i})^{H_i}$ be the corresponding fixed subfield. The degree $K_i$ over $\Q$ is $d$. Note that the discriminants of these fields are coprime: the discriminant of the cyclotomic extension $\Q(\zeta_{q_i})$ is a power of $q_i$ by the conductor-discriminant formula, and so the discriminant of $K_i$ is also a power of $q_i$. Finally, we can realize the extension $K_i$ as the splitting field of some polynomial $f_i \in \Z[x]$ of degree $d$. 

\begin{proof}[Proof of \Cref{theorem:diameter_bound_non_split}]
Use the polynomials $f_i$ to run the multipolynomial Chebotarev density theorem from above. Taking a sufficiently large $N$, we obtain polynomials $f_i$ such that the set of primes $p$ for which some $f_i \bmod p$ is irreducible has density $1 - \epsilon$. For each such prime $p$, we can take the corresponding Galois number field $K_i = \Q[x]/(f_i(x))$ and form the Lie ring $(\gfrak(\Z) \otimes \integers_{K_i})^\Theta$. The covering map from \Cref{lemma:covering_surjection} is then a surjection onto the non-split form $\gfrak(\F_{p^d})^\Theta$. The rest of the proof is identical to that of \Cref{theorem:diameter_bound_split}, using the generating pair from \Cref{generating_pair_in_covering_ring}. For each field $K_i$, we obtain a constant $C_i$ such that the diameter of $\gfrak(\F_{p^d})^\Theta$ with respect to a random pair is at most $C_i \log p$ with probability tending to $1$ as $p \to \infty$. We can thus take $C_\epsilon = \max_i C_i$ to complete the proof.
\end{proof}

\begin{remark} \label{remark:d2_need_infinitely_many_fields}
It is not possible to cover a density $1$ set of primes using only finitely many fields $E$ in the above argument. This obstruction is already visible for $d = 2$. In that case the fields $E$ are of the form $\Q(\sqrt{D})$, and a prime $p$ coprime to $D$ is inert in $E$ precisely when $D$ is a quadratic non-residue modulo $p$. Now let $n\geq 2$ be fixed. We claim that there is a positive density set of primes $p$ for which every integer $D$ with $2\leq D\leq n$ is a quadratic residue modulo $p$. It is enough to impose this condition for the primes $q\leq n$. Let $K_q = \Q(\sqrt q)$, the splitting field of $f_q = x^2 - q$. As $q \leq n$ ranges over the primes, the fields $K_q$ are jointly independent. By the multipolynomial Chebotarev density theorem above, the density of primes $p$ for which none of the $f_q$ is irreducible mod $p$ is $2^{-\pi(n)}$, where $\pi(n)$ is the number of primes less than or equal to $n$. For every such prime $p$, each prime $q\leq n$ is a residue modulo $p$. In other words, the smallest positive quadratic non-residue modulo $p$ is larger than $n$. Thus, for every fixed $n$, the quadratic fields $\Q(\sqrt D)$ with $2\leq D\leq n$ miss a positive density set of primes.
\end{remark}

\appendix

\section{Extremal bases of forms}
\label{sec:extremal_bases}

In this section, we complete the technical proof of \Cref{prop:extremal_basis} by constructing an extremal basis for each classical Lie algebra form $\gfrak(E)^\Theta$ over a field $E = \F_{p^d}$ and verifying that it satisfies the required quadratic condition. All computations were performed symbolically using Wolfram Mathematica and are available in the repository \cite{github-extremal}. We outline the general strategy below.

We begin the construction with a highest weight vector $x \in \gfrak(E)^\Theta$ and the corresponding lowest weight vector $y$.\footnote{The vectors are stored as symbolic expressions in Mathematica. For example, in the case of $\slfrak_3(\F_{p^2})^\Theta$, we take $x = (\omega - \omega^\sigma) E_{13}$, where $\omega$ and $\omega^\sigma$ are independent commuting variables. All calculations are then performed over the field $\Q(\omega, \omega^\sigma)$.} Let $h = [x, y]$. Thus, $\{ x, h, y \}$ form an $\slfrak_2(\F_p)$-triple. Following \cite[Proposition 2.4]{cohen2008simple}, the vector space $\gfrak(E)^\Theta$ decomposes into a direct sum of subspaces $L_{-2}, L_{-1}, L_0, L_1, L_2$, where $L_i$ is the eigenspace of $-\ad_h$ with eigenvalue $i$. For any $z \in L_1$, let $u(z) = \exp(\ad_z)x$. This element is well defined as long as $p > 3$, since $\ad_z$ is nilpotent of order at most $5$. By \cite[Proposition 4.1]{cohen2008simple}, the elements $x$, $y$, and $u(z)$, as $z$ ranges over a basis of $L_1$, form an extremal generating set $\mathcal{E}$ of $\gfrak(E)^\Theta$.

To obtain an extremal basis from this generating set, we follow \cite[Lemma 2.5]{cohen2001lie}. Given two extremal elements $a$ and $b$, we define the new element $\eta(a,b) = \exp(\ad_a)b$. This element is again extremal, and $[a,b]$ lies in the span of $a$, $b$, and $\eta(a,b)$. By repeatedly applying the operation $\eta(a,b)$ to pairs of elements from $\mathcal{E}$, we obtain a set of extremal elements $\mathcal{E}'$. Continuing this process, and applying the operation $\eta(a,b)$ to pairs $a \in \mathcal{E}$, $b \in \mathcal{E}'$, we obtain a larger set $\mathcal{E}''$. Iterating the $\eta$ construction as needed, we eventually obtain a set $\mathcal{E}^\infty$ that spans $\gfrak(E)^\Theta$. The following example illustrates this process.

\begin{example}
Let $\gfrak = \slfrak_n$ with $n$ even and $\Theta=\vartheta \sigma$ of order $2$. We can take $x = E_{1n}$ and $y = E_{n1}$. The space $L_1$ is spanned by $Z_1(i) = E_{i1} + (-1)^i E_{n,n+1-i}$ and $Z_2(i) = \omega E_{i1} + (-1)^i  \omega^\sigma E_{n,n+1-i}$ for all $2 \leq i \leq n-1$. We then construct the elements
\begin{align*}
    U_1(i) &= E_{1, n} + (-1)^{1 + i} E_{1, n+1-i} + E_{i, n} + (-1)^{1 + i} E_{i, n+1-i}, \\
    U_2(i) &= E_{1, n} + (-1)^{1 + i} \omega^\sigma E_{1, n+1-i} + \omega (E_{i, n} + (-1)^{1 + i} \omega^\sigma E_{i, n+1-i}).
\end{align*}
The set $\mathcal{E} = \{ x, y \} \cup \{ U_1(i), U_2(i) \mid 2 \leq i \leq n-1 \}$ is an extremal generating set of $\slfrak_n(E)^\Theta$. The larger set $\mathcal{E}''$ contains the following basis of $\slfrak_n(E)^\Theta$:
\begin{align*}
    \mathcal{X} =\; & \{ x,\, y,\, \eta(x,y) \} 
    \cup \{ U_1(i),\, U_2(i), \, \eta(U_1(i), y),\, \eta(U_2(i), y) \mid 2 \leq i \leq n-1 \}  \\
    & \cup \left\{ \eta(U_1(i), \eta(x,y)),\, \eta(U_2(i), \eta(x,y)) \mid 2 \leq i \leq n-1 \right\} \\
    & \cup \left\{ \eta(U_1(i), U_1(n+1-i)),\, \eta(U_2(i), U_2(n+1-i)) \mid 2 \leq i \leq n/2 \right\} \\
    & \cup \left\{ 
        \eta(U_1(j), \eta(U_1(i), y)),\, 
        \eta(U_1(j), \eta(U_2(i), y)) 
        \mid i \neq j,\, i+j \neq n+1 
    \right\}.
\end{align*}
\end{example}

It remains to verify that we can always choose a basis $\mathcal{X}$ of $\gfrak(E)^\Theta$ from $\mathcal{E}^\infty$ that satisfies the quadratic condition from \Cref{prop:extremal_basis}. We show that this can be achieved by taking $a = y$. In most cases, the corresponding element $z$ can simply be taken as either $x$ or $[b, y]$ for any $b \in \mathcal{X} \setminus \{ y \}$.\footnote{Interestingly, there may exist elements $b \in \mathcal{E}^\infty$ for which the quadratic condition is \emph{not} satisfied with $a = y$, so care must be taken in the choice of $\mathcal{X}$. This occurs, for example, in the case of non-split $A_n$ for the element $b = \eta(U_1(i), U_1(n+1-i))$ with even $i$ from the example above.} We construct $\mathcal{X}$ either by explicitly specifying its elements and verifying the quadratic condition, or, in the non-split $D_4$ and $E_6$ cases, by showing that $\mathcal{E}''$ spans $\gfrak(E)^\Theta$ and that a basis $\mathcal{X}$ can be selected from it so that taking $z = x$ or $z = [b, y]$ suffices. In the $D_4$ case, matrix rank is computed over the field $\Q$ extended by commuting variables $\omega$, $\omega^\sigma$, and $\omega^{\sigma^2}$. The rank does not drop when passing to a finite field for sufficiently large $p$, as ensured by the lemma below.

\begin{lemma}
Let $d$ be a prime. Let $P$ be a nonzero polynomial with integer coefficients in $d$ variables of total degree $N$. For any prime $p > N + 1$, there exists $\omega \in \F_{p^d} \setminus \F_p$ so that $P(\omega, \omega^p, \dots, \omega^{p^{d-1}}) \neq 0$ in $\F_{p^d}$.
\end{lemma}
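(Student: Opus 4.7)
The plan is to collapse the claim into a one-variable counting argument by evaluating $P$ along a Frobenius orbit. Define
\[
    Q(t) = P(t, t^p, \ldots, t^{p^{d-1}}) \in \F_p[t],
\]
reducing coefficients modulo $p$ throughout. Since $\abs{\F_{p^d} \setminus \F_p} = p^d - p$, it suffices to establish two things: that $Q$ is a nonzero polynomial in $\F_p[t]$, and that $\deg Q < p^d - p$.

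Showing $Q \not\equiv 0$ is the heart of the argument, and is where the hypothesis $p > N+1$ enters. Expanding $P(x_1, \ldots, x_d) = \sum_\alpha c_\alpha x_1^{\alpha_1} \cdots x_d^{\alpha_d}$ over multi-indices $\alpha = (\alpha_1, \ldots, \alpha_d)$ with $\abs{\alpha} \leq N$, substitution yields
\[
    Q(t) = \sum_\alpha c_\alpha \, t^{\alpha_1 + \alpha_2 p + \cdots + \alpha_d p^{d-1}}.
\]
Since each component satisfies $\alpha_i \leq N < p$, the exponent $\alpha_1 + \alpha_2 p + \cdots + \alpha_d p^{d-1}$ is the base-$p$ expansion of a uniquely determined nonnegative integer, with digits $\alpha_1, \ldots, \alpha_d$. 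Hence distinct multi-indices produce distinct monomials of $t$ in $Q$, no cancellation among them can occur, and $Q \not\equiv 0$ as long as some $c_\alpha$ is nonzero modulo $p$. This last condition is an implicit requirement of the lemma, since otherwise the conclusion fails outright; in the paper's application it is ensured by the qualifier \emph{sufficiently large $p$}.

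Finally, the maximal exponent of $Q$ comes from the multi-index $\alpha = (0, \ldots, 0, N)$, giving $\deg Q \leq N p^{d-1}$. The inequality $N p^{d-1} < p^d - p$ rearranges to $p^{d-2}(p - N) > 1$, which for $d = 2$ is exactly $p > N+1$ (and matches the hypothesis sharply), while for $d \geq 3$ it is automatic from $p - N \geq 2$. Consequently, $Q$ has strictly fewer roots in $\F_{p^d}$ than the set $\F_{p^d} \setminus \F_p$ has elements, so some $\omega$ in this set satisfies $Q(\omega) \neq 0$, proving the lemma. Notably, the primality of $d$ plays no role in this argument, so the same proof applies for arbitrary $d \geq 2$; the prime assumption is only relevant in the broader setting where the lemma is invoked.
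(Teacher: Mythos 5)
Your proof is correct and follows the same route as the paper: form $Q(t) = P(t, t^p, \ldots, t^{p^{d-1}})$, bound $\deg Q \leq N p^{d-1}$, and compare with $\abs{\F_{p^d} \setminus \F_p} = p^d - p$. You also fill in two details the paper leaves implicit, both worth noting: the base-$p$ reasoning showing $Q \not\equiv 0$ once $N < p$ (distinct multi-indices give distinct exponents $\alpha_1 + \alpha_2 p + \cdots + \alpha_d p^{d-1}$, so no cancellation), and the accurate observation that the lemma as stated tacitly assumes $p$ does not divide the content of $P$ — otherwise $P$ reduces to the zero polynomial mod $p$ and the conclusion fails — a hypothesis ensured in the paper's application by taking $p$ sufficiently large relative to the fixed polynomial $P$.
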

\begin{proof}
Let $Q(x) = P(x, x^p, \dots, x^{p^{d-1}})$. Then $Q$ is a nonzero polynomial in one variable of degree at most $N p^{d-1}$, hence it has at most as many roots in $\F_{p^d}$. The hypothesis $p>N+1$ ensures that this number is less than the number of elements in $\F_{p^d} \setminus \F_p$. Therefore, we can find $\omega$ as desired.
\end{proof}

\bibliographystyle{alpha}
\bibliography{references}

\begin{thebibliography}{CSUW01}

\bibitem[BDH24]{bajpai2024new}
Jitendra Bajpai, Daniele Dona, and Harald~A. Helfgott.
\newblock New dimensional estimates for subvarieties of linear algebraic
  groups.
\newblock {\em Vietnam Journal of Mathematics}, 52(2):479--518, 2024.

\bibitem[BGGT15]{breuillard2015expansion}
Emmanuel Breuillard, Ben~J. Green, Robert~M. Guralnick, and Terence Tao.
\newblock Expansion in finite simple groups of {L}ie type.
\newblock {\em Journal of the European Mathematical Society}, 17(6):1367--1434,
  2015.

\bibitem[BGT11]{breuillard2011approximate}
Emmanuel Breuillard, Ben~J. Green, and Terence Tao.
\newblock Approximate subgroups of linear groups.
\newblock {\em Geometric and Functional Analysis}, 21(4):774--819, 2011.

\bibitem[BJM25]{github-extremal}
Marco Barbieri, Urban Jezernik, and Matevž Miščič.
\newblock Extremal bases for classical {L}ie algebras.
\newblock
  \url{https://github.com/urbanjezernik/extremal-bases-for-classical-lie-algebras},
  2025.

\bibitem[Boi09]{bois2009generators}
Jean-Marie Bois.
\newblock Generators of simple {L}ie algebras in arbitrary characteristics.
\newblock {\em Mathematische Zeitschrift}, 262(4):715--741, 2009.

\bibitem[BS92]{babais-conjecture}
L{\'a}szl{\'o} Babai and {\'A}kos Seress.
\newblock On the diameter of permutation groups.
\newblock {\em European Journal of Combinatorics}, 13(4):231--243, 1992.

\bibitem[BSSZ26]{bloom2026sum}
Thomas~F Bloom, Will Sawin, Carl Schildkraut, and Dmitrii Zhelezov.
\newblock The sum-product conjecture is false for real numbers.
\newblock {\em arXiv preprint arXiv:2605.28781}, 2026.

\bibitem[CIR08]{cohen2008simple}
Arjeh~M. Cohen, G{\'a}bor Ivanyos, and Dan Roozemond.
\newblock Simple {L}ie algebras having extremal elements.
\newblock {\em Indagationes Mathematicae}, 19(2):177--188, 2008.

\bibitem[CJZ25]{cantor2025two}
Omer Cantor, Urban Jezernik, and Andoni Zozaya.
\newblock Two-generation of traceless matrices over finite fields.
\newblock {\em Linear Algebra and its Applications}, 713:1--17, 2025.

\bibitem[CSUW01]{cohen2001lie}
Arjeh~M. Cohen, Anja Steinbach, Rosane Ushirobira, and David Wales.
\newblock {L}ie algebras generated by extremal elements.
\newblock {\em Journal of Algebra}, 236(1):122--154, 2001.

\bibitem[Don23]{dona-sum-bracket}
Daniele Dona.
\newblock A sum-bracket theorem for simple {L}ie algebras.
\newblock {\em Journal of Algebra}, 631:658--694, 2023.

\bibitem[Dre21]{drensky2021weak}
Vesselin Drensky.
\newblock Weak polynomial identities and their applications.
\newblock {\em Communications in Mathematics}, 29, 2021.

\bibitem[EJ22]{eberhard2022babai}
Sean Eberhard and Urban Jezernik.
\newblock Babai’s conjecture for high-rank classical groups with random
  generators.
\newblock {\em Inventiones Mathematicae}, 227(1):149--210, 2022.

\bibitem[Erd76]{erdos1976some}
P~Erdos.
\newblock Some recent problems and results in graph theory, combinatorics, and
  number theory.
\newblock In {\em Proc. Seventh SE Conf. Combinatorics, Graph Theory and
  Computing, Utilitas Math}, pages 3--14, 1976.

\bibitem[FH13]{fulton2013representation}
William Fulton and Joe Harris.
\newblock {\em Representation theory: a first course}, volume 129.
\newblock Springer Science \& Business Media, 2013.

\bibitem[Hel08]{helfgott2008growth}
Harald~A. Helfgott.
\newblock Growth and generation in $\mathrm{SL}_2(\mathbf{Z}/p\mathbf{Z})$.
\newblock {\em Annals of Mathematics}, pages 601--623, 2008.

\bibitem[JM25]{jezernik2025random}
Urban Jezernik and Matev{\v{z}} Mi{\v{s}}{\v{c}}i{\v{c}}.
\newblock Random {L}ie bracket on $\mathfrak{sl}_2(\mathbf{F}_p)$.
\newblock {\em arXiv preprint arXiv:2503.16175}, 2025.

\bibitem[Khu98]{khukhro1998p}
Evgenii~I. Khukhro.
\newblock {\em p-{A}utomorphisms of {F}inite p-{G}roups}, volume 246.
\newblock Cambridge University Press, 1998.

\bibitem[Kur51]{kuranishi1951everywhere}
Masatake Kuranishi.
\newblock On everywhere dense imbedding of free groups in {L}ie groups.
\newblock {\em Nagoya Mathematical Journal}, 2:63--71, 1951.

\bibitem[MS23]{mohammadi2023attaining}
Ali Mohammadi and Sophie Stevens.
\newblock Attaining the exponent 5/4 for the sum-product problem in finite
  fields.
\newblock {\em International Mathematics Research Notices}, 2023(4):3516--3532,
  2023.

\bibitem[PS16]{pyber2016growth}
L{\'a}szl{\'o} Pyber and Endre Szab{\'o}.
\newblock Growth in finite simple groups of {L}ie type.
\newblock {\em Journal of the American Mathematical Society}, 29(1):95--146,
  2016.

\bibitem[Rib13]{ribenboim2013classical}
Paulo Ribenboim.
\newblock {\em Classical theory of algebraic numbers}.
\newblock Springer Science \& Business Media, 2013.

\bibitem[RNRS16]{roche-sum-product}
Oliver Roche-Newton, Misha Rudnev, and Ilya~D. Shkredov.
\newblock New sum-product type estimates over finite fields.
\newblock {\em Advances in Mathematics}, 293:589--605, 2016.

\bibitem[Sel12]{seligman2012modular}
George~B. Seligman.
\newblock {\em Modular {L}ie algebras}, volume~40.
\newblock Springer Science \& Business Media, 2012.

\bibitem[Ste59]{steinberg1959variations}
Robert Steinberg.
\newblock Variations on a theme of {C}hevalley.
\newblock {\em Pacific Journal of Mathematics}, 9(3):875--891, 1959.

\bibitem[Str04]{strade2004simple}
Helmut Strade.
\newblock {\em Simple {L}ie algebras over {F}ields of {P}ositive
  {C}haracteristic: {I}. {S}tructure {T}heory}.
\newblock Walter de Gruyter, 2004.

\bibitem[TV06]{tao2006additive}
Terence Tao and Van~H. Vu.
\newblock {\em Additive combinatorics}, volume 105.
\newblock Cambridge University Press, 2006.

\end{thebibliography}

\end{document}